\numberwithin{equation}{section}
\theoremstyle{plain}
\newtheorem{theorem}{Theorem}[section]
\newtheorem{corollary}[theorem]{Corollary}
\newtheorem{lemma}[theorem]{Lemma}
\newtheorem{assump}[theorem]{Assumption}
\newtheoremstyle{definition}{}{}{}{}{\bfseries}{}{ }{}
\theoremstyle{definition}
\newtheorem{remark}[theorem]{Remark}
\DeclareMathOperator{\Cov}{Cov}
\DeclareMathOperator{\sign}{sign}
\DeclareMathOperator*{\argmax}{\arg\!\max}
\DeclareMathOperator{\Varh}{Var}
\DeclareMathOperator{\Exp}{Exp}
\newcommand{\convd}{\overset{\mathcal{D}}{\Longrightarrow}}
\newcommand{\convp}{\overset{\mathbb{P}}{\longrightarrow}}
\newcommand{\convn}{\underset{n \to \infty}{\longrightarrow}}
\newcommand{\convdn}{\underset{d,n \to \infty}{\longrightarrow}}
\newcommand{\maxhd}{\max_{h=1}^{d}}
\newcommand{\maxhSdc}{\max_{h \in \Sd^c}}
\newcommand{\minhSdc}{\min_{h \in \Sd^c}}
\newcommand{\minhd}{\min_{h=1}^{d}}
\newcommand{\cusums}{\mathbb{U}_{n,h}(s)}
\newcommand{\cusumssq}{\mathbb{U}_{n,h}^2(s)}
\newcommand{\cusum}[1]{\mathbb{U}_{n,h}(#1)}
\newcommand{\Mint}{\mathbb{M}_{n,h}^2}
\newcommand{\hatMint}{\hat{\mathbb{M}}_{n,h}^2}
\newcommand{\hatth}{\hat{t}_h}
\newcommand{\hatqh}{\hat{q}_h}
\newcommand{\hatTnh}{\hat{T}_{n,h}}
\newcommand{\hatTnhb}{\hat{T}_{n,h}'}
\newcommand{\hatTnhdelta}{\hat{T}_{n,h}^{(\Delta)}}
\newcommand{\Tnh}{T_{n,h}}
\newcommand{\bt}{\textbf{t}}
\newcommand{\hatsigmah}{\hat{\sigma}_h}
\newcommand{\hatsigma}{\hat{\sigma}}
\newcommand{\sigmah}{\sigma_h}
\newcommand{\mean}[1]{\mathbb{E}\left[ #1 \right]}
\newcommand{\E}[1]{\mathbb{E}\left[ #1 \right]}
\newcommand{\R}{\mathbb{R}}
\newcommand{\F}{\mathcal{F}}
\newcommand{\Z}{\mathbb{Z}}
\newcommand{\N}{\mathbb{N}}
\newcommand{\Prob}{\mathbb{P}}
\newcommand{\B}{\mathcal{B}}
\newcommand{\A}{\mathcal{A}}
\newcommand{\Pro}{\mathcal{P}}
\newcommand{\Pb}[1]{\mathbb{P}\left(#1\right)}
\newcommand{\PbZ}[1]{\mathbb{P}_{\vert \mathcal{Z}_n}\left(#1 \right)}
\newcommand{\PbZL}[1]{\mathbb{P}_{\vert \mathcal{Z}_n}^{\mathcal{L}}\left(#1 \right)}
\newcommand{\floor}[1]{\lfloor #1 \rfloor}
\newcommand{\Var}[1]{\Varh\left(#1 \right) }
\newcommand{\tauhatth}{\hat{\tau}_h}
\newcommand{\tauth}{\tau(t_h)}
\newcommand{\limn}{\lim_{n \to \infty}}
\newcommand{\limd}{\lim_{d \to \infty}}
\newcommand{\limdn}{\lim_{d,n \to \infty}}
\newcommand{\dmuh}{\Delta\mu_h}
\newcommand{\absdmuh}{\vert \Delta\mu_h \vert}
\newcommand{\HNull}{H_{0,\Delta}}
\newcommand{\HA}{H_{A,\Delta}}
\newcommand{\TS}{\mathcal{T}_{d,n}}
\newcommand{\BSTS}{\mathcal{B}_{d,n}}
\newcommand{\setd}{\{1,\dots,d\}} 
\newcommand{\gua}{g_{1-\alpha}}
\newcommand{\Ld}{\mathcal{L}_d}
\newcommand{\hatdmuh}{\Delta\widehat{\mu}_h}
\newcommand{\Rd}{\mathcal{R}_d}
\newcommand{\hatRd}{\widehat{\mathcal{R}}_d}
\newcommand{\Sd}{\mathcal{S}_d}
\newcommand{\Id}{\mathcal{I}_d}
\newcommand{\Bd}{\mathcal{B}_d}
\newcommand{\Ed}{\mathcal{E}_d}
\newcommand{\Lhminus}{\widehat{L}_h^-}
\newcommand{\Lhplus}{\widehat{L}_h^+}
\newcommand{\Zn}{\mathcal{Z}_n}
\newcommand{\bea}{\begin{eqnarray*}}
\newcommand{\eea}{\end{eqnarray*}}
\newcommand{\be}{\begin{eqnarray}}
\newcommand{\ee}{\end{eqnarray}}
\title{Relevant change points in high dimensional time series}
\begin{document}
\author{
{\small Holger Dette, Josua G\"osmann} \\
{\small Ruhr-Universit\"at Bochum} \\
{\small Fakult\"at f\"ur Mathematik}\\
{\small 44780 Bochum, Germany} \\
{\small e-mail: holger.dette@rub.de}\\
{\small  josua.goesmann@rub.de}\\
}

\maketitle

\begin{abstract} 
This paper investigates the problem of detecting relevant change points in the mean vector, say $\mu_t =(\mu_{1,t},\ldots  ,\mu_{d,t})^T$ of a high dimensional time series $(Z_t)_{t\in \Z}$.
While the recent literature on testing for change points in this context considers hypotheses for the equality of the means $\mu_h^{(1)}$ and $\mu_h^{(2)}$ before and after the change points in the different components, we are interested in a null hypothesis of the form 
$$
H_0:  |\mu^{(1)}_{h} - \mu^{(2)}_{h}  | \leq \Delta_h  ~~~\mbox{ for all } ~~h=1,\ldots ,d
$$
where $\Delta_1, \ldots , \Delta_d$ are given thresholds for which a smaller difference of the means in the $h$-th component is considered to be non-relevant.
This formulation of the testing problem is motivated by the fact that in many applications a modification of the statistical analysis might not be necessary, if the differences between the parameters before and after the change points in the individual components are  small.   This problem is of particular relevance in high dimensional change point analysis, where a small change in only one component can yield a rejection by the classical procedure although all components change only in a non-relevant way.

We propose a new test for this problem based on the maximum of squared and integrated CUSUM statistics and investigate its properties 
as the sample size $n$ and the dimension $d$ both converge to infinity. In particular, using Gaussian approximations for the maximum of a large number of dependent random variables, we show that on certain points of the boundary of the null hypothesis
a standardised version of the maximum converges weakly to a Gumbel distribution.
This result is used to construct a consistent asymptotic level $\alpha$ test and a multiplier bootstrap procedure is proposed, which improves the finite sample performance of the test. The finite sample properties of the test are investigated by means of a simulation study and we also illustrate the new approach investigating data from hydrology.
\end{abstract}

Keywords:  high dimensional time series, change point analysis, CUSUM, relevant changes, precise hypotheses, 
physical dependence measure 
\\ \\
AMS Subject Classification: 62M10, 62F05, 62G10 

\section{Introduction}  \label{sec1}
\def\theequation{1.\arabic{equation}}
\setcounter{equation}{0}
In the context of high dimensional time series it is typically unrealistic to assume stationarity. 
A simple form of non-stationarity, which is motivated by financial time series, where large panels of asset returns routinely display break points, is to assume structural breaks at different times (the change points) in the individual components.
One goal of statistical inference is to correctly estimate these ``change points'' such that the original data can be partitioned into shorter stationary segments.
This field is called change point analysis in the statistical literature and since the seminal papers of \cite{page1954,page1955} numerous authors have worked on the problem of detecting structural breaks or change points in various statistical models [see \cite{auehor2013} for a recent review]. 
There exists in particular an enormous amount of literature on testing for and estimating the location of a change in the mean vector  $\mu_t =(\mu_{1,t}, \ldots , \mu_{d,t})^T =  \E{Z_t} $ of a multivariate time series $(Z_t)^n_{t=1}$ [see \cite{chu1996}, \cite{horkokste1999},  
\cite{kirmusomb2015} among others]. A common feature in these references 
consists in the fact that the dimension, say $d$, of the time series is fixed.
High dimensional change point problems, where the dimension $d$ increases with sample size have only been recently considered in the literature [see \cite{bai2010}, \cite{zhangetal2010}, \cite{horvhusk2012}  and \cite{enikharch2014}, \cite{jirak:2015}, \cite{chofryz2015} and \cite{wangsam2016} among others].
Some of this work uses information across the coordinates in order to detect smaller changes than could be observed in any individual component series.  

In the simplest case of one structural break in each component many authors attack the problem of detecting the change point by means of hypothesis testing. For example, \cite{jirak:2015} investigates the hypothesis of no structural break in a high-dimensional time series by testing the hypotheses
\begin{align}
\label{null}
H_0: \mu_{1,h} = \mu_{2,h} = 
\ldots =\mu_{n,h} \text{ for all }  h=1, \ldots ,d~, 
\end{align} 
where $\mu_{t,h}$ denotes the $h$-th component of the mean vector $\mu_{t} $ of the random variable $Z_t \ (t=1,\dots,n)$.
The alternative is then formulated (in the simplest case of one structural break) as
\begin{align}\label{alt}
\begin{split}
H_1: \mu^{(1)}_h 
&=  \mu_{1,h}  = \mu_{2,h}  = \dots = \mu_{k_h,h}  \\
&  \neq  \ \mu_{k_h+1,h}   = \mu_{k_h+2,h} = \dots = \mu_{n,h} ~= ~\mu^{(2)}_h \\
\end{split}
\end{align} 
for at least one  $h \in \{1,\ldots , d\}$, where $k_h \in \{ 1,\dots,n \}$ denotes the (unknown) location of the change point in the $h$-th component. 

While - even under sparsity assumptions - the detection of small changes in each component is a very challenging problem, a modification of the statistical analysis (such as prediction) might not be necessary if the actual size of change is small.
For example, in risk management situations, one is interested in fitting a suitable model for forecasting Value at Risk from 
data after the last change point [see e.g.\ \cite{wied:2013}], but  in practice, small changes in the parameter are perhaps not very interesting because they do not yield large changes in the Value at Risk. 
The forecasting quality might only improve slightly, but this benefit could be negatively overcompensated by transaction costs, in particular for high-dimensional portfolios.
Moreover, even, if the null hypothesis \eqref{null} is not rejected, it is difficult to quantify the statistical uncertainty for the subsequent statistical analysis (conducted under the assumption of stationarity), as there is no control about the type II error in this case.

The present work is motivated by these observations and proposes a test for the null hypothesis of \emph{no relevant change point} in a high dimensional context, that is
\begin{align}\label{relevant}
\HNull: | \mu^{(1)}_h   - \mu^{(2)}_h   | &\leq \Delta_h \text{ for all }  h=1,\dots,d \\ 
\label{althyp} \mbox{versus} \;\;\;  \HA: | \mu^{(1)}_h   - \mu^{(2)}_h  | &> \Delta_h  \text{ for at least one  }   h \in \{1,\ldots , d \}~,
\end{align}
where $\mu^{(1)}_h $ and $\mu^{(2)}_h$ are the parameters before and after the change point in the $h$-th component and $\Delta_1, \ldots , \Delta_d$ are given thresholds for which a smaller difference of the means in the $h$-th component is considered as non-relevant.

The problem of testing for a relevant difference between (one dimensional) means of two samples has been discussed by numerous authors mainly in the field of biostatistics (see \cite{wellek2010testing} for a recent review).
In particular testing relevant hypotheses avoids the consistency problem as mentioned in \cite{berkson1938}, that is: \textit{Any consistent test will detect any arbitrary small change in the parameters if the sample size is sufficiently large.}
In the finite dimensional case \cite{dettewied2016} investigated relevant hypotheses in change point analysis for general parameters using an $L^2$-Norm. The present paper differs from their approach in several perspectives. 
First, we consider the high-dimensional setting, where the dimension increases with sample size.
Second, relevant changes in different components are allowed to occur at different locations, and we identify corresponding locations and components by our approach.
Third we develop a bootstrap procedure in order to obtain a reasonable approximation of the nominal level in high dimensions.

The alternative approach requires the specification of the thresholds $\Delta_h >0 $, and this has to be carefully discussed and depends on the specific application. 
We also note that the hypotheses \eqref{relevant} contain the classical hypotheses \eqref{null}, which are obtained by simply choosing $\Delta_h = 0 $ for all $h=1, \ldots , d$.
Nevertheless we argue that from a practical point of view it might be very reasonable to think about this choice more carefully and to define the size of change in which one is really interested. 
In particular it is often known that  $| \mu^{(1)}_h   - \mu^{(2)}_h  | \neq 0 $ although one is testing ``classical hypotheses'' of the form \eqref{null} and \eqref{alt}.
Moreover, a decision of no relevant structural break at a controlled type I error can be easily achieved by interchanging the null hypothesis and alternative in \eqref{relevant}, i.e. considering the hypotheses
\begin{equation}\label{equiv}
\begin{split}
\widetilde{H}_{0,\Delta}: | \mu^{(1)}_h   - \mu^{(2)}_h  | &> \Delta_h \text{ for at least one }  h \in \{1,\ldots , d \} \\ 
\mbox{versus}\;\;\; \widetilde{H}_{A,\Delta} : | \mu^{(1)}_h   - \mu^{(2)}_h   | &\leq \Delta_h  \text{ for all } 
h=1\,\dots,d ~.
\end{split}
\end{equation}
An obvious idea to address this problem is multiple testing using the univariate tests in \cite{dettewied2016} with a Bonferroni correction.
However, in the high-dimensional context such an approach may accumulate too many errors in the approximation of the nominal level by the individual tests.
As a consequence the resulting Bonferroni test for the hypotheses in \eqref{equiv} offers a worse approximation of the nominal level compared to our approach (corresponding simulation results are available from the authors).

In this paper we use a different approach to test the hypotheses of a relevant structural break in any of the components of a high dimensional time series.
The basic ideas are explained in Section \ref{sec2} (without going into any technical details), where we propose to calculate for any component the integral of the squared CUSUM process and reject the null hypotheses whenever the maximum of these integrals (calculated with respect to all components) is large.
In order to obtain critical values for this test we derive in Section \ref{sec3} the asymptotic distribution of an appropriately standardized version of the maximum as the sample size and the dimension converge to infinity.
We also provide several auxiliary results, which are of own interest, and investigate the case where the maximum is only calculated over a subset of the components.
These results are then used in Section \ref{sec4} to prove that the proposed test yields to a valid statistical inference, i.e. it is a consistent and asymptotic level $\alpha$ test. 
It turns out that - in contrast to the classical change point problem - the analysis of the test for no relevant structural breaks is substantially harder as the null hypothesis does not correspond to a stationary process (non-relevant changes in the means are allowed).  
Section \ref{sec5} is devoted to the investigation of a multiplier block bootstrap procedure.
In particular we prove that the quantiles generated by this resampling method also yield to a consistent asymptotic level 
$\alpha$ test. 
The finite sample properties of the new test are investigated in Section \ref{sec6}, where we also illustrate our approach analysing a data example from hydrology.
Finally some of the technical details are deferred to the appendix.

\section{Relevant changes in high dimensions - basic principles}
\label{sec2}
\def\theequation{2.\arabic{equation}}
\setcounter{equation}{0}

In this Section we explain the basic idea of our approach to test for a relevant change in at least one component of the mean vector of a high dimensional time series.
For the sake of a transparent representation we try to avoid technical details at this stage and refer to the subsequent sections, where we present the basic assumptions and mathematical details establishing the validity of the proposed method. 

Throughout this paper we consider an array of real valued random variables $\{Z_{j,h} \}_{j\in \mathbb{Z}, h \in \mathbb{N}}$ such that 
\begin{align}\label{eq:model}
Z_{j,h} = \mu_{j,h} + X_{j,h}~,
\end{align}
where $\mu_{j,h} \in \R$ for all $j\in \mathbb{Z},h \in \mathbb{N}$ and $\{ X_{j,h} \}_{j\in \mathbb{Z}, h \in \mathbb{N}}$ denotes an array of centered and real valued random variables, which implies $\mu_{j,h} = \mean{ Z_{j,h}}$ for all ${j\in \mathbb{Z}, h \in \mathbb{N}}$. 
It follows from the  assumptions made in Section \ref{sec3} that for each fixed $d \in \mathbb{N}$ the time series
\begin{align} \label{statser}
\{ (X_{j,1},\dots,X_{j,d})^T\}_{j \in \mathbb{Z}}
\end{align}
is stationary.
Suppose that 
$$Z_1=(Z_{1,1},\dots,Z_{1,d})^T,\dots, Z_{n}=(Z_{n,1},\dots,Z_{n,d})^T \in \mathbb{R}^d$$ 
are $d$-dimensional observations from the array  $\{Z_{j,h}\}_{j \in \mathbb{Z}, h \in \mathbb{N}}$ and assume that for each component $h \in \{1,\ldots ,d\} $ there exists an unknown constant $t_h \in (0,1)$, such that
\begin{align}
\begin{split}
\mu_h^{(1)} &= \mu_{j,h} ~,~~j=1, \ldots , \floor{nt_h}~,\\
\mu_h^{(2)} &= \mu_{j,h} ~,~~j= \floor{nt_h}+1 , \ldots, n~,
\end{split}
\end{align}
where  $\floor{x}=\sup\{z \in \mathbb{Z}\,\vert\, z \leq x\}$ denotes the largest integer smaller or equal than $x$.
In this case the random variables $\{ Z_{j,h} \}_{h=1,\ldots ,d; j=1, \ldots ,n}$ also depend on $n$, i.e. they form a triangular array, but for the sake of readability, we will suppress this dependence in our notation.

We define $\dmuh=\mu_h^{(1)} -\mu_h^{(2)}$ as the unknown difference between the means in the  $h$-th component before and after the change point $t_h$.
Note, that in the case $\dmuh \not = 0$ the actual location $k_h=\floor{nt_h}$ of the change point depends on the sample size $n$, which is a common assumption in the 
literature on change point problems to perform asymptotic inference.
It simply ensures that the number of observations before and after the change point is growing proportional to $n$. 
For each $h $ with $\dmuh = 0$  the observable process $\{Z_{j,h}\}_{j \in \mathbb{Z}} $ is stationary and to avoid misunderstandings we set $t_h = 1/2$, whenever  $\dmuh = 0$.
The reader should notice that in this case the actual value is of no interest in the following discussion.  
With this notation the hypotheses in \eqref{relevant} can be rewritten as 
\begin{align}\label{hnull}
\HNull: \absdmuh &\leq \Delta_h \;\text{for all}\; h \in \setd \\
\text{versus }  ~~
\label{halt}
\HA: \absdmuh &> \Delta_h \;\text{for some}\; h \in \setd~.
\end{align}
To develop an appropriate test statistic for these hypotheses we rely on the widely used concept of CUSUM-statistics.
For each component $h \in \setd$ we consider the corresponding CUSUM-process for the $h$-th component defined by
\begin{align}\label{eq:cusum}
\begin{split}
\cusums
&= \frac{1}{n}\sum_{j=1}^{\floor{ns}} Z_{j,h} - \frac{\floor{ns}}{n^2}\sum_{j=1}^n Z_{j,h} \\
&= \frac{n-\floor{ns}}{n^2}\sum_{j=1}^{\floor{ns}} Z_{j,h} - \frac{\floor{ns}}{n^2}\sum_{j=\floor{ns}+1}^n Z_{j,h}~.
\end{split}
\end{align}
Under the assumptions stated in Section \ref{sec3} it is shown in Section \ref{sec71} of the appendix that 
\begin{align} \label{motiv}
\frac{3}{(t_h(1-t_h))^2}\mean{ \int_0^1 \cusumssq ds}
=\dmuh^2 + o(1)
\end{align}
as $n \to \infty$ and therefore our considerations will be based on the statistic
\begin{align}\label{eq:Mint}
\hatMint 
=\frac{3}{(\hatth(1-\hatth))^2} \int_0^1 \cusumssq ds~,
\end{align}
where $\hatth$ denotes an appropriate estimator for the unknown location $t_h$ of the structural break, that will be precisely defined in Section \ref{sec2s}.
The null hypothesis 
\begin{align}\label{rel1d}
H_{0, h}:  \absdmuh \leq \Delta_h 
\end{align}
of no relevant change in the $h$-th component will be rejected for large values of $\hatMint $, and in order to determine a critical value we introduce the normalization
\begin{align}\label{eq:tnhdelta}
\hatTnhdelta = \frac{\sqrt{n}}{\tauhatth \hatsigmah \Delta_h} \big( \ \hatMint - \Delta_h^2 \ \big)~,
\end{align}
where $\hatsigmah$ denotes an estimator for the unknown long-run variance (see Section \ref{sec2s} for a precise definition),
and the quantity $\tauhatth$ is a function of the estimate of the change point defined by 
\begin{align} \label{tauh}
\tauhatth=\tau(\hatth)
:= \frac{2\sqrt{1+2\hatth(1-\hatth)}}{\sqrt{5}\hatth(1-\hatth)}~,
\end{align}
which  arises due to the integral in equation \eqref{eq:Mint}.
For the case $\Delta_h = \dmuh$, it will be shown in Section \ref{sec732} of the appendix that for a fixed component $h \in \setd$
\begin{align}
\label{weak1}
\hatTnhdelta \convd \mathcal{N}(0,1)\;\;\;\;\;\;\;\text{as}\;n\to\infty~,
\end{align}
where the symbol $\convd$ represents weak convergence of a random variable. 
Moreover monotonicity arguments show that the test, which rejects the null hypothesis \eqref{rel1d} of a relevant change point in the mean of the $h$-th component, whenever $\hatTnhdelta $ exceeds the $(1-\alpha)$-quantile of the standard normal distribution, is a consistent asymptotic level $\alpha$ test. 

In order to construct a test for the hypotheses $\HNull$ versus $\HA$ in \eqref{hnull} and \eqref{halt} of a relevant change point in at least one of the components of a high dimensional time series we  propose a simultaneous test, which rejects the null hypothesis for large values of the statistic 
$$
\maxhd \hatTnhdelta~.
$$
Note that a similar approach has been investigated by \cite{jirak:2015}, who considered the ``classical'' change point problem in high dimension, that is
\begin{align}\label{class}
\begin{split}
 H_{\text{0,class}}  &: \absdmuh = 0 \;\text{for all}\; h \in \setd  \\ 
 \mbox{  versus }~~
H_{\text{A,class}} &: \absdmuh > 0 \;\text{for some}\; h \in \setd~.
\end{split}
\end{align}
In this case the weak convergence \eqref{weak1} does not hold (in fact $\hatTnhdelta = o_\Prob (1)$ under $H_{\text{0,class}} $) and a different statistic has to be considered.
 
As it is well known that the (adjusted) maximum of standard normal distributed random variables converges weakly to a Gumbel distribution if they exhibit an appropriate dependence structure [see for example \cite{berman:1964}], it is reasonable to consider the following 
\begin{align}\label{eq:TS}
\TS =  a_d \Big( \maxhd \hatTnhdelta - b_d \Big)
\end{align}
for the high-dimensional change point problem \eqref{hnull}, where $a_d$ and $b_d$ denote appropriate sequences, such that the left-hand side converges weakly at specific points of the ``boundary'' of the parameter space corresponding to the null hypothesis.  

\begin{figure}[h]
\centering
\includegraphics[width=9cm,height=6cm]{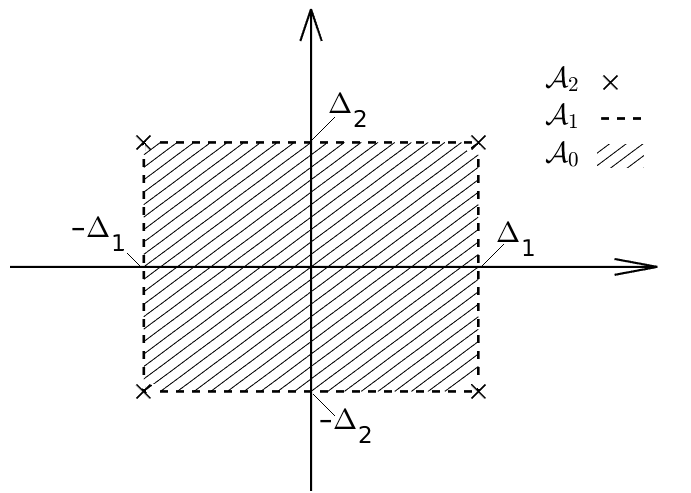}
\caption{\it Decomposition of the parameter space corresponding to the null hypothesis \eqref{hnull} into the sets $\A_0, \A_1$ and $\A_2$  for the case $d=2$. 
\label{fig:hypotheses}} 
\end{figure}

To make these arguments more precise, define the sequences
\begin{align}
\label{ad} a_d &= \sqrt{2\log d}~,\\
\label{bd} b_d &=  a_d - \log(4\pi \log d )/2a_d~, 
\end{align}
and note that the parameter spaces corresponding to the null hypothesis \eqref{hnull} and the alternative \eqref{halt} are given by 
$$
{\cal H}_0= \Big\{ (x,y) \in \R^d \times \R^d \big| ~ |x_h-y_h| \leq  \Delta_h ~~\forall \;  h \in \setd \Big\}
$$
and $ {\cal H}_A  = \R^{2d} \setminus {\cal H}_0$, respectively (here $x= (x_1,\ldots ,x_d)^T$, $y= (y_1,\ldots ,y_d)^T)$
denote $d$-dimensional vectors).
Define for $k=0, \ldots , d$ the ``$(d-k)$-dimensional boundary of the hypotheses \eqref{hnull} and \eqref{halt}'' by  
\begin{align}\label{kbound}
\begin{split}
\A_{k} = \Big\{ (x,y) \in &\R^d \times \R^d   \big| ~ |x_h-y_h| \leq  \Delta_h \,\forall\,  h \\ 
&\text{ with equality for precisely } k \text{ components} \Big\}
\subseteq {\cal H}_0 ~.
\end{split}
\end{align} 
For the case $d=2$, we illustrate this decomposition of the null hypothesis parameter space in Figure \ref{fig:hypotheses}.
In fact, a large part of this paper is devoted to prove the following statements (under appropriate assumptions - see Theorem \ref{thm:level}
in Section \ref{sec4}): 
\begin{itemize} 
\item[(1)]
If $ (\mu^{(1)} , \mu^{(2)} ) \in \A_d$, the weak convergence
\begin{align*}
\TS \convd G\;
\end{align*}
holds as both $n,d\to\infty$, where $G$ denotes a Gumbel distribution, i.e. 
\begin{align}\label{gumbeldef}
\Pb { G \leq x} = e^{-e^{-x}}~.
\end{align}
\item[(2)] If $ ( \mu^{(1)}, \mu^{(2)} ) \in \A_{k}$ and there exists a constant C, such that $| \mu_h^{(1)} - \mu_h^{(2)} | \leq C < \Delta_h $, whenever $| \mu_h^{(1)} - \mu_h^{(2)} | \neq \Delta_h$ and $\lim_{d \to \infty} k/d =c \in (0,1]$,  we have
$$\TS \convd G + \log c $$
as $n,d\to\infty$~.
\item[(3)] If $( \mu^{(1)}, \mu^{(2)}) \in \A_{k}$ and $\lim_{d \to \infty} k/d = 0 $ we have
$$\TS \convd  - \infty$$
as $n,d\to\infty$~.
\end{itemize} 
Let $\gua$ denote the $(1-\alpha)$ quantile of the Gumbel distribution, then it follows from these
considerations that the test which rejects $\HNull$ in favour of the alternative hypothesis $\HA$, whenever 
\begin{align}
\label{asymtest}
\TS > \gua ~,
\end{align}
is an asymptotic level $\alpha$ test.
More precisely, it follows (under appropriate assumptions stated below) that
\begin{equation} \label{heur} 
\mathbb{P}_{H_{0,\Delta}} \big ( \TS > \gua  \big)  \longrightarrow \begin{cases}  \alpha  &    \mbox{  if  }  ( \mu^{(1)}, \mu^{(2)}) \in \A_{d} \cr
 \alpha^\prime   &  \mbox{  if  }  ( \mu^{(1)}, \mu^{(2)}) \in \A_{k},\; \lim_{d \to \infty} k/d = c \in (0,1]  \cr
0  &  \mbox{  if  }  ( \mu^{(1)}, \mu^{(2)}) \in \A_{k},\; \lim_{d \to \infty} k/d = 0 \cr
\end{cases}
\end{equation}
as ${n,d \to \infty} $, where $  0 \leq  \alpha^\prime  \leq  \alpha$.
Additionally the test is consistent (see Theorem \ref{thm:consistent}).
In the following sections we will make these arguments more rigorous.
Moreover, in order to improve the finite sample approximation of the nominal level we also introduce a multiplier bootstrap procedure and prove its consistency in Section 5.

\section{Asymptotic properties}\label{sec3}
\def\theequation{3.\arabic{equation}}
\setcounter{equation}{0}

In this Section we provide the theoretical background for the test suggested in Section \ref{sec2}.
We begin introducing some notations and assumptions. After stating the main assumptions we provide in Section \ref{subsec:mainresults} the asymptotic theory for an analogue of the statistic $\TS$ defined in \eqref{eq:TS}, where the centering in \eqref{eq:tnhdelta} is performed by the ''true'' squared differences  $|\dmuh |^2$ and the estimates of the variances $\sigma_h$ and the locations of the change points $t_h$ have been replaced by their ''true'' counterparts.  
In Section \ref{sec2s} we introduce estimators for the locations of the structural breaks (which may occur at a different 
location in each component) and investigate their consistency properties. These are then used to define appropriate variance estimators (note that variances have to be estimated in the case of changes in the mean).
Finally, we consider in Section \ref{sec34} the asymptotic distribution of the maximum of the statistics \eqref{eq:tnhdelta} where again centering is performed by $|\dmuh |^2$ instead of $\Delta^2_h$.
These results will then be used in the subsequent Section \ref{sec4} to investigate the statistical properties of the test \eqref{asymtest}.
 
Throughout this paper we will use the following notation and symbols.
Let $x \wedge y$ and $x \vee y$ define the minimum and maximum of two real numbers $x$ and $y$, respectively. 
For an appropriately integrable random variable $Y$ and $q\geq 1$, let $\|Y\|_q = \mean{\vert Y \vert^q}^{1/q}$ denote the $\mathbb{L}^q$-norm.  
By the symbols $\convd$ and $\convp $ we denote weak convergence and  convergence in probability, respectively.
Moreover, we use the notation $x_n \lesssim y_n$, whenever the inequality  $x_n \leq C\cdot y_n$, holds for some constant $C>0$ which does not depend on the sample size and dimension and whose actual value is of no further interest. 
Due to its frequent appearance, $G$ will always represent a (standard) Gumbel distribution defined by \eqref{gumbeldef}.
In the high dimensional setup the dimension $d$ converges to infinity with $n \to \infty$~.

Recall the definition of model \eqref{eq:model} and assume throughout this paper that the time series $\{ X_{j,h} \}_{j\in \mathbb{Z}, h \in \mathbb{N}}$  
forms a physical system [see e.g. \cite{wu:2005}], that is
\begin{align}\label{eq:Xphysical}
X_{j,h} = g_h(\varepsilon_j,\varepsilon_{j-1},\dots)~,
\end{align}
where $\{ \varepsilon_j \}_{j \in \mathbb{Z}} $ is a sequence of i.i.d. random variables with values in some measure space $\mathbb{S}$ such that for each $h\in \N$ the function  $g_h : \mathbb{S}^{\mathbb{N}} \to \R $ is measurable. 
Note that it follows from \eqref{eq:Xphysical} that the time series defined in \eqref{statser} is stationary.
Let $\varepsilon_0'$ be an independent copy of $\varepsilon_0$ and define 
\begin{align}
X_{j,h}' = g_h(\varepsilon_j,\varepsilon_{j-1},\dots,\varepsilon_1, \varepsilon_0',\varepsilon_{-1},\dots)~.
\end{align}
The distance between $X_{j,h}$ and its counterpart $X_{j,h}'$ is used to quantify the (temporal) dependence of the physical system, 
and for this purpose we introduce the coefficients
\begin{align}\label{eq:physicalcoeff}
\vartheta_{j,h,p} := \Vert X_{j,h}- X_{j,h}' \Vert_p~, 
\end{align}
which measure the influence of $\varepsilon_0$ on the random variable $X_{j,h}$.
Let us  also define some additional parameters. For $h,i \in \mathbb{N}$
\begin{align*}
\phi_{h,i}(j) = \Cov(X_{0,h},X_{j,i}) = \Cov(Z_{0,h},Z_{j,i}) 
\end{align*}
denotes the covariance function of the $h$-th and $i$-th component at lag $j$.
Accordingly, the autocovariance function for the $h$-th component is given by $\phi_{h}(j) = \phi_{h,h}(j)=\Cov(X_{0,h},X_{j,h})$ and we obtain the well-known representations \begin{align} \label{sigmah}
\gamma_{h,i} = \sum_{j \in \mathbb{Z}} \phi_{h,i}(j)
\;\;\;
\text{and}
\;\;\;
\sigma_{h}^2 = \sum_{j \in \mathbb{Z}} \phi_h(j)
\end{align}
for the long-run covariance and long-run variance, respectively. 
If we have furthermore $\sigma_h, \sigma_i>0$, we can additionally define the long-run correlations by 
\begin{align}\label{longrun}
\rho_{h,i} &= \dfrac{\gamma_{h,i}}{\sigma_h\sigma_i}  
\end{align} 
and it will be crucial for our work, that these quantities become sufficiently small with an increasing temporal distance $\vert h - i \vert$.
This will be precisely formulated in the following section.

\subsection{Assumptions}\label{subsec:assump}

Operating in a high-dimensional framework usually needs stronger assumptions than those for the finite-dimensional case.
Mainly, we need uniform dependence and moment conditions among all components to exclude extreme cases and to ensure, that the unknown parameters can be estimated accurately.
In the high-dimensional setup considered in this paper the number of parameters $t_h$, $\sigma_h$ grows together with the dimension $d$, since even under the null hypothesis of no relevant change points each component exhibits its own variance and change point structure.
The precise assumptions made in this paper are the following.

\begin{assump}[temporal assumptions]
\label{a1}
Suppose that there exist constants $\delta \in (0,1)$, $\sigma_+ > 0$ and $\sigma_{-}>0$~,
such that for some $p>4$ the physical dependence coefficients
$\vartheta_{j,h,p}$ and long run variances $\sigma_{h}$ defined in
\eqref{eq:physicalcoeff} and \eqref{sigmah}, respectively, satisfy
\begin{enumerate}
\item[({T}1)] $\sup_{h \in \mathbb{N}} \vartheta_{j,h,p} \lesssim
\delta^{j}$~,
\end{enumerate}
\begin{enumerate}
\item[({T}2)] $\sigma_{-} 
\leq \inf_{h \in \mathbb{N}} \sigma_{h}
\leq \sup_{h \in \mathbb{N}} \sigma_{h}
\leq \sigma_{+}~.$
\end{enumerate}
\end{assump}

\begin{assump}[spatial assumptions]\label{a2}
The dimension $d$ increases with the sample size at a polynomial rate, i.e. we assume that for constants $C_1$ and $D$
\begin{enumerate}[(S1)]
\item $d = C_1n^D$~,
\end{enumerate}
where the exponent $D$ satisfies
\begin{enumerate}[({S}2)]
\item $0 < D < \min\{ p/4 - 1,~p/4 - B(p/2+1)-1\}$
\end{enumerate}
and $p>4$ refers to the $\mathbb{L}^p$-norm $\Vert \cdot \Vert_p$ used to measure the physical dependence in Assumption \ref{a1}.
Here $B \in (0,1/2)$ denotes a constant used to control the bandwidth of a variance estimator, that will be defined in Section \ref{sec2s}.
Further we assume for the long-run correlations in \eqref{longrun}
\begin{enumerate}[({S}3)]
\item $\sup_{\vert h - i \vert > 1}\vert \rho_{h,i}\vert \leq \rho_+ < 1$~,
\end{enumerate}
\begin{enumerate}[({S}4)]
\item $\vert \rho_{h,i} \vert \leq C_2 \log ( \vert h-i \vert + 2)^{-2 - \eta}$~,
\end{enumerate}
where $\rho_+>0$, $\eta>0$ and $C_2>0$ denote global constants.
\end{assump}

\begin{assump}[moment assumptions]\label{a3}
Suppose, that there exists a positive sequence $(M_d)_{d\in \N}$ and constants $C_3>1$ and $C_4>0$, such that
\begin{enumerate}
\item[(M{1})]  $\maxhd \mean{ \exp(\vert X_{1,h} \vert/ M_d)} \leq C_3$~,
\vspace{0.2cm}
\item[(M{2})] $M_d \leq C_4 n^m$ \;\;\text{with}\;\; $m<3/8$~.
\end{enumerate}
\end{assump}

\begin{assump}[location of the change points]\label{a4}\label{assump:change}
There exists a constant $\bt  \in (0,1/2) $, such that for all $h \in \mathbb{N}$ the unknown locations $\floor{nt_h}$ of the structural breaks satisfy
\begin{enumerate}[(C{1})]
\item $\bt \leq t_h \leq 1 - \bt$~.
\end{enumerate}
\end{assump}

\noindent
Let us give a brief explanation for the Assumptions \ref{a1} - \ref{a4}.
The temporal Assumptions (T1) and (T2) define the temporal dependence structure and bounds for the long-run variance.
Further (T1) implies the existence of the quantities $\sigma_h$, $\gamma_{h,i}$ and $\rho_{h,i}$ defined in \eqref{sigmah} and \eqref{longrun}.
Condition (S3) and (S4) refer to the spatial dependence and are only needed to derive the desired extreme value convergence.
Assumption (S1) gives a relation between the number of observations and its dimension, while (S2) is a slightly technical assumption.
In particular, the coefficients in \eqref{eq:physicalcoeff} satisfy the inequality $\vartheta_{j,h,p_1} \leq \vartheta_{j,h,p_2}$ if $1\leq p_1 \leq p_2$~, and therefore a larger value of $p$ in (T1) yields a less dependent time series.
This on the other hand is reflected in condition (S2), which is less restrictive for larger values of $p$. 
This condition enables reasonable estimators of the variance $\sigma_h$ and the locations $t_h$ of the structural breaks.
For a proof of the uniform consistency of the estimates for the latter quantity we must further rely on Assumption (C1), which makes the change points identifiable and is a common assumption in the literature.
Roughly speaking, it simply ensures to have enough data before and after the change point in each component. 
Assumptions (S1) and (S2) together with $n \to \infty$ directly imply $d \to \infty$.
It is also worth mentioning, that (S1) can be replaced by $d \lesssim n^D$, if one additionally supposes that $d \to \infty$. 

The moment Assumptions (M1) and (M2) are required for a Gaussian approximation and are satisfied, if $\{X_{j,h}\}_{j \in \mathbb{Z}, h \in \setd}$ is stationary with respect to the index $j$ and for each $h \in \mathbb{N}$ the random variable $X_{1,h}$ is sub-Gaussian with parameters $v_h, V_h>0$, i.e.  
\begin{align*}
\mean{\exp(\lambda X_{1,h})}
\leq V_h \exp(\lambda^2 v_h), ~~\text{ for all  } \lambda \in \mathbb{R}~,
\end{align*}
where the constants $(v_h)_{h=1,\ldots ,d}$ and $(V_h)_{h=1,\ldots ,d}$ satisfy  
\begin{align*}
\maxhd v_h < n^{3/4}
\;\;\;
\text{and}
\;\;\;
\max_{h \in \mathbb{N}} V_h \leq C~.
\end{align*} 
for some constant $C>0$.
Choosing $M_d = \maxhd \sqrt{v_h}$ we directly obtain condition (M2). Condition (M1) follows by a straightforward calculation, that is 
\begin{align*}
\maxhd \mean{\exp(\vert X_{1,h} \vert /M_d)} \leq 2\maxhd V_h e^{v_h/M_d^2} \leq 2Ce < \infty~.
\end{align*}

\subsection{Asymptotic properties - known variances and  locations}
\label{subsec:mainresults}
In this section we assume that the locations $t_h$ of the changes and the long-run variances  $\sigma_h$ are known.
Recalling the approximation \eqref{motiv} we define
\begin{align}\label{hmat}
\Mint = \frac{3}{(t_h(1-t_h))^2}\int_0^1 \cusumssq ds
\end{align}
and investigate the asymptotic properties of the maximum of the statistics
\begin{align}\label{t1}
\Tnh
&= \frac{\sqrt{n}}{\tau_h \sigmah \absdmuh} \left( \Mint - \dmuh^2 \right)~,
\end{align}
where $\tau_h=\tau(t_h)$ and the function $\tau $ is defined by \eqref{tauh}.
Note that $\Tnh$ is the analogue of the statistic $\hatTnhdelta$, where the thresholds $\Delta_h$, estimates $\hat t_h$ and $\hat \sigma_h$ have been replaced by the unknown quantities $\dmuh$, $t_h$ and $ \sigma_h$, respectively.
Our first result shows that an appropriate standardized version of the maximum of the statistics $\Tnh$ converges weakly to a Gumbel distribution.
The proof is complicated and we indicate the main steps in this section deferring the more technical arguments to the appendix.

\begin{theorem}\label{cor:tnhgumbel}
Assume that the Assumptions \ref{a1} - \ref{a4} are satisfied and that additionally there exist constants $C_{\ell}$, $C_{u}$ (independent of $h$ and $d$) such that the inequalities $C_{\ell} \leq \absdmuh \leq C_u$ hold for all $h=1,\ldots , d$ .
Then 
\begin{align*}
a_d \Big ( \maxhd T_{n,h} - b_d \Big) \convd G
\end{align*}
as $d,n \to \infty$, where the sequences $a_d$ and $b_d$ are defined in \eqref{ad} and \eqref{bd}, respectively.
\end{theorem}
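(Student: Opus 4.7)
\medskip
\noindent\textbf{Proof proposal.} The plan is to decompose $\Mint-\dmuh^2$ into a leading statistic that is linear in the noise $\{X_{j,h}\}$, whose long-run variance equals $\tau_h^2\sigma_h^2\dmuh^2/n$, plus remainders that are uniformly negligible on the Gumbel scale $1/a_d=O(1/\sqrt{\log d})$. Once this reduction is in place, the assertion follows from a high-dimensional Gaussian coupling for linear functionals of the physical system, combined with Berman's extreme value theorem for weakly dependent Gaussian sequences.

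First I would write $\cusum{s}=\mu_{n,h}(s)+\mathbb{V}_{n,h}(s)$, where $\mu_{n,h}(s):=\mean{\cusum{s}}$ is the deterministic triangular function peaking at $s=t_h$ with value $\approx t_h(1-t_h)\dmuh$, and $\mathbb{V}_{n,h}(s)$ is the CUSUM of the centred stationary sequence $\{X_{j,h}\}$. Expanding the square and integrating in $s$ yields $\Mint=A_h+B_h+C_h$, with $A_h$ coming from $\mu_{n,h}^2$, $B_h$ from the cross term, and $C_h$ from $\mathbb{V}_{n,h}^2$. A direct computation as in \eqref{motiv} produces $A_h=\dmuh^2+O(1/n)$. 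Interchanging the $s$-integral with the $j$-sum, the cross term rewrites as
\begin{align*}
B_h=\frac{\dmuh}{n}\sum_{j=1}^{n}\kappa_h(j/n,\,t_h)\,X_{j,h}
\end{align*}
for an explicit piecewise polynomial kernel $\kappa_h$ bounded uniformly in $h$, and from \eqref{sigmah} the variance of $\sqrt{n}\,B_h/(\sigma_h\absdmuh)$ converges to $\tau_h^2$. This is precisely how the factor $\tau_h$ defined in \eqref{tauh} enters the normalisation of $\Tnh$.

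Next I would show that the quadratic remainder is negligible uniformly in $h$. Assumption (T1) yields $\mean{C_h}=O(\sigma_h^2/n)$, and combined with the sub-exponential tail from (M1)--(M2), a maximal inequality for exponentially integrable variables gives $\maxhd |C_h|=O_\Prob(M_d^2\log d/n)$, which by (S2) is $o_\Prob(1/(a_d\sqrt{n}))$; the bookkeeping of $A_h-\dmuh^2$ is similar. Combined with the uniform bound $C_\ell\le\absdmuh\le C_u$, this gives $\maxhd \Tnh=\maxhd L_{n,h}+o_\Prob(1/a_d)$, where $L_{n,h}:=\sqrt{n}\,B_h/(\tau_h\sigma_h\absdmuh)$ is a standardised weighted sum of the physical system $\{X_{j,h}\}_{j=1}^n$. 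The final step is then a Gaussian approximation for the joint law of $(L_{n,1},\dots,L_{n,d})$: using the geometric decay of $\vartheta_{j,h,p}$ from (T1), the exponential moments from (M1)--(M2), the polynomial growth (S1)--(S2), and the uniform-in-$h$ control on $\kappa_h$, one applies a Chernozhukov--Chetverikov--Kato type coupling to produce a centred Gaussian vector $(Y_1,\dots,Y_d)$ with covariance equal to the long-run covariance of the $L_{n,h}$ and satisfying $\sup_x|\Pb{\maxhd L_{n,h}\le x}-\Pb{\maxhd Y_h\le x}|\to 0$ at a rate better than $1/a_d$. The vector $Y$ then has unit marginal variances and cross-correlations asymptotically equal to $\rho_{h,i}$, so (S3) and (S4) verify Berman's condition, whence $a_d(\maxhd Y_h-b_d)\convd G$ by \cite{berman:1964}, and transferring back along the coupling yields the claim.

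The main obstacle is the Gaussian coupling: one must match the joint distribution of $d=C_1 n^D$ heterogeneously weighted linear statistics of a physically dependent, sub-exponential time series by a Gaussian vector in Kolmogorov distance over rectangles, with error beating the Gumbel scale $1/\sqrt{\log d}$. This requires carefully balancing the geometric temporal decay in (T1), the truncation level $M_d$ from (M1)--(M2), the polynomial dimension in (S2), and the uniformity in $h$ of the kernel $\kappa_h$. The remaining ingredients, namely the explicit expansion of $\Mint$, the concentration bound on $\maxhd |C_h|$, and Berman's extreme-value theorem, are essentially routine once this coupling is in hand.
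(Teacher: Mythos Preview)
Your outline matches the paper's proof closely: the same decomposition \eqref{eq:tnhexpansion} into a linear cross term $T_{n,h}^{(2)}$ and a quadratic remainder $T_{n,h}^{(1)}$, negligibility of the latter (Lemma \ref{lem:tnh1}), a Gaussian coupling for the former (Lemma \ref{lem:gaussappro}, where the paper writes $\bar T_{n,h}^{(2)}=n^{-1/2}\sum_j c_{n,j,h}X_{j,h}$ with uniformly bounded weights and applies Corollary~2.2 of \cite{zhang:2016} rather than CCK directly), and finally Berman's theorem under (S3)--(S4).

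One concrete point where your argument as written does not close: the rate $\maxhd|C_h|=O_\Prob(M_d^2\log d/n)$ is \emph{not} $o_\Prob(1/(a_d\sqrt n))$ under (M2), since that would require $n^{2m}(\log d)^{3/2}/\sqrt n\to 0$, i.e.\ $m<1/4$, whereas (M2) only gives $m<3/8$. The $M_d^2$ is an over-count; a correctly applied Bernstein inequality for the partial sums would produce $O_\Prob(\log d/n)$. The paper sidesteps this by bounding $a_d\maxhd T_{n,h}^{(1)}\lesssim \tfrac{a_d}{\sqrt n}\bigl(\maxhd\sup_s\sqrt n|\mathbb V_{n,h}(s)|\bigr)^2$ and invoking the Gumbel limit for the CUSUM maximum from \cite{jirak:2015}, which delivers $O_\Prob(\log d/n)$ with no $M_d$ factor. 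A second minor correction: the limiting Gaussian correlations are not $\rho_{h,i}$ but $\rho_{h,i}\,\widetilde\tau(t_h,t_i)/(\tau(t_h)\tau(t_i))$ with $\widetilde\tau$ as in \eqref{eq:widetildetau}; the paper only establishes and uses the bound $|\Sigma_{h,i}|\le|\rho_{h,i}|$, which is what Berman's condition needs.
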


\noindent
{\bf Proof of Theorem \ref{cor:tnhgumbel} (main steps).} 
Observing the definition \eqref{hmat} and \eqref{t1} we obtain by a straightforward calculation the representation 
\begin{align}\label{eq:tnhexpansion}
\begin{split}
\Tnh 
= \frac{3\sqrt{n}}{(t_h (1 - t_h))^2\tau_h\sigma_h\absdmuh} \left( \int_0^1\mathbb{U}_{n,h}^2(s)ds - \mu_h^2(s,t_h)ds \right)
= T_{n,h}^{(1)} + T_{n,h}^{(2)}~,
\end{split}
\end{align}
where 
\begin{align}\label{mu}
\mu_h(s,t_h)  = \left( t_h \wedge s - st_h \right)(\mu_{h,1}-\mu_{h,2})
\end{align}
and the statistics $T_{n,h}^{(1)}$ and $T_{n,h}^{(2)}$ are defined by
\begin{align}
\label{tn1} T_{n,h}^{(1)} &= \frac{3\sqrt{n}}{(t_h (1 - t_h))^2\tau_h\sigma_h\absdmuh} \int_0^1 (\mathbb{U}_{n,h}(s) - \mu_h(s,t_h))^2 ds~, \\
\label{tn2} T_{n,h}^{(2)} &= \frac{6\sqrt{n}}{(t_h (1 - t_h))^2\tau_h\sigma_h\absdmuh} \int_0^1 \mu_h(s,t_h)(\mathbb{U}_{n,h}(s) - \mu_h(s,t_h))ds~.
\end{align}
For the following discussion, we introduce the additional notation
\begin{align}
\label{tn2bar} \bar{T}_{n,h}^{(2)} &= \frac{6\sqrt{n}}{(t_h (1 - t_h))^2\tau_h\sigma_h\absdmuh} \int_0^1 \mu_h(s,t_h)\left( \mathbb{U}_{n,h}(s) - \mean{ \mathbb{U}_{n,h}(s)} \right) ds~.
\end{align}
Our first auxiliary result shows that the first term $T_{n,h}^{(1)}$ in the decomposition \eqref{eq:tnhexpansion} is asymptotically negligible.
It is proved in Section \ref{sec711} of the appendix. 

\begin{lemma}\label{lem:tnh1}
If the assumptions of Theorem \ref{cor:tnhgumbel} are satisfied, we have 
\begin{align*}
a_d \maxhd T_{n,h}^{(1)} \convp 0
\end{align*}
as $d,n \to \infty$, where the sequence $a_d$ is defined in \eqref{ad}.
\end{lemma}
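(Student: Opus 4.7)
The plan is to reduce the claim to a uniform control of the stochastic fluctuations of the CUSUM process, using the fact that the integrated squared deviation has expected order $1/n$ while the prefactor contributes only $\sqrt{n}$. First I would decompose
\begin{align*}
\mathbb{U}_{n,h}(s) - \mu_h(s,t_h) = R_h(s) + B_h(s),
\end{align*}
where $R_h(s) = \mathbb{U}_{n,h}(s) - \mean{\mathbb{U}_{n,h}(s)}$ is the stochastic remainder and $B_h(s) = \mean{\mathbb{U}_{n,h}(s)} - \mu_h(s,t_h)$ is a deterministic bias caused only by the floor function in the CUSUM. Then I would apply $(a+b)^2 \leq 2a^2 + 2b^2$ to separate the two contributions inside the integral defining $T_{n,h}^{(1)}$.

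For the bias $B_h(s)$, a direct computation using that $\mu_{j,h}$ is piecewise constant with a single jump at $\floor{nt_h}$ shows $|B_h(s)| \leq C|\dmuh|/n$ uniformly in $s$ and $h$; under the hypothesis $|\dmuh|\le C_u$ this yields $\maxhd \int_0^1 B_h(s)^2 ds = O(1/n^2)$, which is more than negligible after multiplication by $\sqrt{n}a_d$.

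The central step is to prove the maximal inequality
\begin{align*}
\maxhd \sup_{s\in[0,1]} |\sqrt{n}\,R_h(s)| = O_\mathbb{P}(\sqrt{\log d}).
\end{align*}
My plan is to write $\sqrt{n}R_h(s)$ as a linear combination of partial sums of the centered innovations $X_{j,h}$ and then apply a Bernstein-type tail bound for physically dependent sequences. Assumptions (T1) and (M1) give geometric physical-dependence coefficients and sub-exponential marginals, so a Nagaev-style inequality yields $\mathbb{P}(|\sqrt{n}R_h(s)|>u) \leq C\exp(-cu^2)$ for $u \leq \sqrt{n}/M_d$; by (M2), the range $u = O(\sqrt{\log d})$ lies in this Gaussian regime. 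A union bound over a grid in $s$ of polynomial size (jumps of $R_h(s)$ only occur at $s = j/n$) and over $h=1,\dots,d$, using (S1), then delivers the stated rate.

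Combining the pieces gives $\maxhd \int_0^1 (\mathbb{U}_{n,h}(s)-\mu_h(s,t_h))^2 ds = O_\mathbb{P}(\log d / n)$. The prefactor $\sqrt{n}/((t_h(1-t_h))^2\tau_h\sigma_h|\dmuh|)$ is uniformly of order $\sqrt{n}$ by (C1), (T2) and the assumption $|\dmuh|\geq C_\ell$; hence $\maxhd T_{n,h}^{(1)} = O_\mathbb{P}(\log d/\sqrt{n})$ and, multiplied by $a_d = \sqrt{2\log d}$, the bound becomes $O_\mathbb{P}((\log d)^{3/2}/\sqrt{n})$, which is $o_\mathbb{P}(1)$ since $d$ grows only polynomially in $n$. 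The main obstacle is Step 3: obtaining a \emph{sub-Gaussian} concentration rate (rather than a polynomial-moment rate, which would only allow $D<p/4$) requires exploiting both the geometric decay in (T1) \emph{and} the exponential moment bound (M1) with $M_d$ controlled by (M2), so that $\sqrt{\log d}$ stays in the Gaussian regime of the Nagaev inequality.
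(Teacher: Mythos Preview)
Your proposal is correct and follows essentially the same route as the paper: bound the integral by the sup, split off the deterministic bias $B_h(s)=O(n^{-1})$, and reduce everything to the maximal inequality $\maxhd\sup_{s\in[0,1]}\sqrt{n}\,|R_h(s)|=O_\Prob(\sqrt{\log d})$, which after multiplication by $a_d/\sqrt{n}$ gives the rate $(\log d)^{3/2}/\sqrt{n}=o(1)$. The only difference is that the paper obtains this maximal inequality by a direct appeal to Theorem~2.5 of \cite{jirak:2015} (the Gumbel limit for $\maxhd\sup_s\sqrt{n}|\mathbb{U}'_{n,h}(s)|$ under the classical null), whereas you sketch a self-contained derivation via a Nagaev-type sub-Gaussian tail bound together with a union bound over the grid $s\in\{j/n\}$ and $h\in\setd$; both arguments deliver the same $O_\Prob(\sqrt{\log d})$ rate, so your approach is a valid substitute for the citation.
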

\noindent
By Lemma \ref{lem:tnh1} it suffices now to deal with the term  $T_{n,h}^{(2)}$.
The next step in the proof of Theorem \ref{cor:tnhgumbel} consists in a (uniform) approximation of the distribution of the maximum of the statistics $\bar{T}_{n,h}^{(2)}$ by the distribution of the maximum of (dependent) Gaussian random variables. 
The proof of the following result is given in Section \ref{sec712} of the appendix, where we make use of new developments on Gaussian approximations for maxima of sums of random variables  [see \cite{chernozhukov:2013} and \cite{zhang:2016}]. 

\begin{lemma}\label{lem:gaussappro}
If the Assumptions \ref{a1} - \ref{a4} are satisfied, there exists a Gaussian distributed random vector $N=(N_1,\dots,N_d)^T$ with 
mean $\mean{N}=0$ and covariance matrix $(\Sigma_{h,i})_{h,i=1}^d$, such that
\begin{align*}
\sup_{x \in \mathbb{R}} \Big \vert \Prob  \Big( \maxhd \bar{T}_{n,h}^{(2)} \leq x 
\Big )  - \Prob \Big ( \maxhd N_h \leq x  \Big )\Big \vert
= o(1)
\end{align*}
for $d,n \to \infty$. Moreover, the entries of the matrix $\Sigma$ are bounded by
$ |\Sigma_{h,i}| \leq |\rho_{h,i}|~,$
where $\rho_{h,i}$ are the long-run correlations defined in \eqref{longrun}.
\end{lemma}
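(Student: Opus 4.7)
The strategy is to represent each $\bar{T}_{n,h}^{(2)}$ as an explicit weighted linear functional of the centred noise variables $(X_{j,h})_{j=1,\ldots,n}$ and then to invoke a high-dimensional Gaussian approximation for maxima of sums of weakly dependent random vectors, as developed by \cite{chernozhukov:2013} and extended to physically dependent time series by \cite{zhang:2016}.

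First, I would interchange summation and integration to obtain such a representation. Writing
$$\cusum{s} - \mean{\cusum{s}} \;=\; \frac{1}{n}\sum_{j=1}^{\floor{ns}} X_{j,h} \;-\; \frac{\floor{ns}}{n^2}\sum_{j=1}^{n} X_{j,h},$$
and using Fubini's theorem against $\mu_h(s,t_h)=(t_h\wedge s - st_h)(\mu_h^{(1)}-\mu_h^{(2)})$ from \eqref{mu}, I would derive
$$\bar{T}_{n,h}^{(2)} \;=\; \sum_{j=1}^{n} w_{n,h,j}\, X_{j,h},$$
with explicit deterministic weights $w_{n,h,j}$ of order $n^{-1/2}$ that depend only on $j/n$, $t_h$, $\tau_h$, $\sigma_h$ and $\dmuh$. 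Because $|\mu_h(s,t_h)|\le t_h(1-t_h)\absdmuh$, the two-sided bound $C_\ell\le\absdmuh\le C_u$ assumed in Theorem \ref{cor:tnhgumbel}, together with Assumption (C1) and (T2), gives uniform two-sided control on $\sqrt{n}\,|w_{n,h,j}|$ in $h$ and $j$.

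Second, I would identify the limiting covariance structure. From the explicit weights the covariance of $\bar{T}_{n,h}^{(2)}$ and $\bar{T}_{n,i}^{(2)}$ becomes a weighted double sum of cross-covariances $\phi_{h,i}(k)$. The geometric decay in (T1) implies a geometric decay of $|\phi_{h,i}(k)|$ in $k$ uniformly in $h,i$, so truncation at lags of order $\log n$ is negligible. Computing the leading term and using $\gamma_{h,i}=\sigma_h\sigma_i\rho_{h,i}$, one obtains the asymptotic bound $|\Sigma_{h,i}|\le|\rho_{h,i}|$ for the entries of the covariance matrix of the Gaussian coupling $N$ to be produced in the next step.

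Third, I would apply the high-dimensional Gaussian approximation for maxima of sums of physically dependent random vectors of \cite{zhang:2016} to the triangular array $\{(w_{n,h,j} X_{j,h})_{h=1,\ldots,d}\}_{j=1,\ldots,n}$. This yields a centred Gaussian vector $N\in\R^d$ with the covariance identified above and satisfying
$$\sup_{x\in\R}\Big|\Prob\big(\maxhd \bar{T}_{n,h}^{(2)} \le x\big) - \Prob\big(\maxhd N_h \le x\big)\Big| \;=\; o(1).$$
The hypotheses to verify are the sub-exponential moment control (M1)--(M2), which transfers through the bounded weights; the geometric decay (T1) of the physical dependence, which carries over to the weighted coordinate processes because the weights are deterministic; the uniform lower variance bound (T2), which via the explicit formula also provides a lower bound on $\Var(\bar{T}_{n,h}^{(2)})$; and the polynomial dimension growth (S1)--(S2), whose exponent $D$ is calibrated precisely so that the explicit error term in the Gaussian approximation theorem vanishes as $n,d\to\infty$.

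The main obstacle will be the quantitative verification of the hypotheses of \cite{zhang:2016}, in particular propagating the bound $\sup_h\vartheta_{j,h,p}\lesssim\delta^j$ and the sub-exponential moment bound through the weighted, non-identically-distributed coordinates, and matching the resulting Gaussian approximation rate with the polynomial dimension constraint in (S2). Once this is carried out, the bound $|\Sigma_{h,i}|\le|\rho_{h,i}|$ is immediate from the linear-functional representation, completing the proof of the lemma.
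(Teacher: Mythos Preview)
Your proposal is essentially correct and follows the same route as the paper: rewrite $\bar T_{n,h}^{(2)}$ as $n^{-1/2}\sum_{j=1}^n c_{n,j,h}X_{j,h}$ with uniformly bounded coefficients, check the moment, dependence and variance hypotheses, and invoke the Gaussian approximation of \cite{zhang:2016}. One point is glossed over, however. The Gaussian approximation produces a vector $\widetilde N$ whose covariance matrix is the \emph{exact} finite-sample covariance $\widetilde\Sigma_{h,i}=\Cov(\bar T_{n,h}^{(2)},\bar T_{n,i}^{(2)})$, not the clean limiting matrix $\Sigma$ for which $|\Sigma_{h,i}|\le|\rho_{h,i}|$ holds as an exact inequality. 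The paper therefore inserts a second step: it defines $\Sigma_{h,i}=\sign(\dmuh)\sign(\Delta\mu_i)\rho_{h,i}\,\widetilde\tau(t_h,t_i)/(\tau(t_h)\tau(t_i))$, shows $\max_{h,i}|\Sigma_{h,i}-\widetilde\Sigma_{h,i}|\lesssim n^{-1/2}$, and then applies the Gaussian comparison inequality (Lemma~3.1 of \cite{chernozhukov:2013}) to pass from $\widetilde N$ to $N$. The bound $|\Sigma_{h,i}|\le|\rho_{h,i}|$ then follows from Cauchy--Schwarz applied to the double integral defining $\widetilde\tau$. You should make this two-step structure explicit, since the lemma as stated asserts the inequality $|\Sigma_{h,i}|\le|\rho_{h,i}|$ without an $o(1)$ slack, and this exact form is what feeds into Berman's extreme-value argument afterwards.
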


\noindent
By Lemma \ref{lem:gaussappro} it suffices to establish the desired limiting distribution for the maximum of a Gaussian distributed vector. 
Nowadays, this is a well-understood area of mathematics and we can rely on results of \cite{berman:1964}, who originally examined the behavior of the maximum of dependent Gaussian random variables.
A straightforward adaption of these arguments shows that the sequence of random variables $\{N_i\}_{i \in \mathbb{N}}$ constructed in Lemma \ref{lem:gaussappro} satisfies
\begin{align}\label{weakn}
a_d \Big( \maxhd N_h - b_d \Big) \convd G
\end{align}
as $d \to \infty$, where the sequences $a_d$ and $b_d$ are defined in \eqref{ad} and \eqref{bd}, respectively.\\
\noindent
Now, combining Lemma \ref{lem:gaussappro} and \eqref{weakn} directly leads to
\begin{align*}
a_d \Big( \maxhd \bar{T}_{n,h}^{(2)}- b_d \Big) \convd G~.
\end{align*}
By $\absdmuh \leq C_u$ it follows that $\maxhd \left( \mu_h(s,t_h) - \mean{\cusums} \right) = O(n^{-1})$, which yields
\begin{align}\label{eq:tnh2conv}
a_d \Big( \maxhd \Tnh^{(2)} - b_d \Big) \convd G~.
\end{align}
Due to $T_{n,h}^{(1)} \geq 0$, we obtain the inequalities
\begin{align*}
a_d \Big( \maxhd \Tnh^{(2)} - b_d \Big) 
&\leq a_d\Big( \maxhd  \Tnh - b_d \Big)\\
&\leq a_d \maxhd \Tnh^{(1)} + a_d \Big( \maxhd \Tnh^{(2)} - b_d \Big),
\end{align*}
which together with Lemma \ref{lem:tnh1} yields the assertion of Theorem \ref{cor:tnhgumbel}.
\hfill $\Box$

\bigskip

\noindent
In the next step we will replace the unknown quantities $t_h$ and $\sigmah$ in \eqref{t1} by suitable estimators, say $\hat t_h$ and $\hatsigmah$, and obtain the statistics
\begin{align}\label{that}
\hatTnh 
&= \frac{\sqrt{n}}{\tauhatth \hatsigmah \absdmuh} \Big ( \hatMint - \dmuh^2 \Big ),  ~~~h\in \setd ~.
\end{align}
We emphasize again that the statistics $\hatTnh $ do not coincide with the statistics $\hatTnhdelta$ in \eqref{eq:tnhdelta}, which are actually used in the test \eqref{asymtest} except in the case where $\dmuh^2 = \vert \mu_h^{(1)} - \mu_h^{(2)} \vert^2 = \Delta_h^2$ for all $h=1,\ldots,d$. 
Thus centering is still performed with respect to the unknown difference of the means before and after the change points.
In the following two subsections we give a precise definition of the two estimators and derive an analogue of Theorem \ref{cor:tnhgumbel} in the case of estimated change points and variances.

\subsection{Estimation of long-run variances and change point locations}  \label{sec2s}

Determining the relative locations $t_h$ of the structural breaks and constructing an estimator for the long-run variances 
$\sigma_h$ for all components $h \in \setd$ is a rather difficult task in a high dimensional setting. 
A further difficulty in the problem of testing for relevant structural breaks consists in the fact that
even under the null hypothesis there may appear structural  breaks in the mean and the corresponding process is not stationary. 
Therefore in contrast of testing the ``classical''  hypotheses in \eqref{class} the construction of a suitable variance estimator is not trivial. 
A standard long-run variance estimator in terms of $\sum_{i \leq \beta_n} \hat{\phi}_{h}(i)$ for an increasing sequence $\{\beta_n\}_{n \in \mathbb{N}}$ and appropriate estimators $\hat{\phi}_{h}(i)$ of the auto-covariance from the full sample may not be consistent due to possible changes of the mean.  

Following \cite{jirak:2015} we define for each component $h=1, \ldots , d$ the sets
\begin{align*}
\mathcal{D}_{h,1}:=\left\lbrace Z_{j,h} \; \vert \; 1\leq j \leq \floor{nt_h}  \right\rbrace
\;\;\;\text{and}\;\;\;
\mathcal{D}_{h,2}:=\left\lbrace Z_{j,h} \; \vert \; \floor{nt_h} <  j \leq n  \right\rbrace~,
\end{align*}
which are the observations before and after the (unknown) change point $\floor{nt_h}$, respectively.
Since these points are usually unknown, we need to estimate them and for this purpose we propose the common estimator 
\begin{align} \label{eq:hatth}
\hatth 
:= \argmax_{s \in (\bt, 1-\bt)}  \big | \cusum{s}  \big | 
= \argmax_{s \in (\bt, 1-\bt)} \Big | \sum_{j=1}^{\floor{ns}}Z_{j,h} - \frac{\floor{ns}}{n}\sum_{j=1}^{n}Z_{j,h}\Big |~.
\end{align}
The following Lemma shows that these estimators are uniformly consistent with respect to all components, where a change point exists.
The proof can be obtained by employing Theorem 3.1 in \cite{jirak:2015} and is given in Section \ref{proofsec23} of the appendix.
\begin{lemma}\label{lem:hatth}  
Let
\begin{align} \label{sd} 
\Sd = \left\lbrace 1 \leq h \leq d  \;\vert\; \absdmuh = 0 \right\rbrace
\end{align}
denote the set of all components $h \in \setd$, where the corresponding time series 
$\{ Z_{j,h} \}_{j\in \Z}$ is stationary, and define 
\begin{align} \label{mustar}
\mu^{\star}_d=\min_{h \in \Sd^c} \absdmuh~.
\end{align}
Suppose further that Assumptions \ref{a1}, \ref{a2} and \ref{a4} are satisfied.
\begin{enumerate}[(i)]
\item If for a sufficiently small constant $1> C >0$  the condition 
\begin{align}\label{eq:limsupmustar}
\limsup_{d,n\to \infty} \frac{\log n}{ (\mu_d^{\star})^2n} = o(n^{-C})
\end{align}
holds, then (for a possibly smaller constant)
\begin{align*}
\max_{h \in \Sd^c} \vert \hatth - t_h \vert =o_\Prob(n^{-C})~.
\end{align*}
\item If the condition $\mu^{\star}_d \geq C_{\ell}$ holds for some constant $C_{\ell}>0$, it follows that
\begin{align}\label{eq:explicitrate}
\max_{h \in \Sd^c} \vert \hatth - t_h \vert = o_\Prob\Big(n^{\tfrac{D}{p/2-2}-1}\Big)~.
\end{align}
\end{enumerate}
\end{lemma}
\begin{remark}
Lemma \ref{lem:hatth} provides a result regarding the uniform convergence of the estimators $(\hatth)_{h \in \setd}$
for the locations of the change points as the dimension $d$ converges to infinity.
Roughly speaking condition \eqref{eq:limsupmustar} guarantees that the decreasing sequence $\mu^{\star}_d$ does not converge too fast to $0$.
Otherwise it is not possible to identify the (relative) locations of all change points simultaneously.\\
In the case where the sequence $\mu^{\star}_d$ has a positive lower bound, part \textit{(ii)} of Lemma \ref{lem:hatth} gives an explicit rate of convergence in terms of the quantities $p$ and $D$.
Note, that larger values of  $p$ improve the rate and in the limit $p \to \infty$ with fixed $D$ one recovers the known optimal rate $o_p(n^{-1})$ for a finite dimension.
On the other hand an increasing parameter $D$, which controls the growth of the dimension ($d\sim n^D$), leads to a slower rate of convergence.
Additionally, we would like to point out, that statement \eqref{eq:explicitrate} combined with Assumption \ref{a2} gives
\begin{align}\label{eq:explicitrate2}
\max_{h \in \Sd^c} \vert \hatth - t_h \vert = o_\Prob\big(n^{-1/2}\big)~,
\end{align}
which we will use frequently in the sequel.
\end{remark}

\noindent
Due to the possible appearance of changes in the mean a standard approach of long-run variance estimation will usually fail. 
To circumvent this problem we follow \cite{jirak:2015a} who proposed two single variance estimators.
To be precise note that view of Lemma \ref{lem:hatth} it is reasonable to estimate the unknown sets $\mathcal{D}_{h,1}$ and $\mathcal{D}_{h,2}$ by 
\begin{align}
\begin{split}\label{dh12}
\widehat{\mathcal{D}}_{h,1} :&= \left\lbrace Z_{j,h} \; \vert \; 1\leq j \leq n\max\{S \hatth , \bt \}  \right\rbrace,\\
\widehat{\mathcal{D}}_{h,2} :&= \left\lbrace Z_{j,h} \; \vert \; n - n\max\{ S( 1 - \hatth ), \bt \} <  j \leq n  \right\rbrace,
\end{split}
\end{align}
respectively, where $S \in (0,1)$ denotes a user-specified separation constant.
For both sets, we can now compute the sample mean, that is
\begin{align*}
\bar{Z}_h^{(1)}=\dfrac{1}{\vert \widehat{\mathcal{D}}_{h,1} \vert }\sum_{Z_{j,h} \in \widehat{\mathcal{D}}_{h,1}} Z_{j,h}~,\;\;
\bar{Z}_h^{(2)}=\dfrac{1}{\vert \widehat{\mathcal{D}}_{h,2} \vert }\sum_{Z_{j,h} \in \widehat{\mathcal{D}}_{h,2}} Z_{j,h}~.
\end{align*}
We then obtain corresponding estimates of the autocovariances by
\begin{align*}
\hat{\phi}_{h}^{(1)}(i)
= \dfrac{1}{\vert\widehat{\mathcal{D}}_{h,1}\vert - \vert i \vert}\sum_{Z_{j,h},Z_{j+i,h} \in \widehat{\mathcal{D}}_{h,1}}(Z_{j,h} - \bar{Z}_h^{(1)})(Z_{j+i,h} - \bar{Z}_h^{(1)})~, \\
\hat{\phi}_{h}^{(2)}(i)
= \dfrac{1}{\vert\widehat{\mathcal{D}}_{h,2}\vert -\vert i \vert}\sum_{Z_{j,h},Z_{j+i,h} \in \widehat{\mathcal{D}}_{h,2}}(Z_{j,h} - \bar{Z}_h^{(2)})(Z_{j+i,h} - \bar{Z}_h^{(2)})~.
\end{align*} 
This yields two long-run variance estimators, namely
\begin{align*}
&\hatsigma_{h,1}^2
= \sum_{\vert i \vert \leq \beta_n} \hat{\phi}_h^{(1)}(i)\;\; \text{ (based on the set } \widehat{\mathcal{D}}_{h,1})~,\\
&\hatsigma_{h,2}^2
= \sum_{\vert i \vert \leq \beta_n} \hat{\phi}_h^{(2)}(i)\;\; \text{ (based on the set } \widehat{\mathcal{D}}_{h,2})~,
\end{align*}
where $\beta_n \sim n^B$ is the bandwidth and $B$ refers to the constant in Assumption \ref{a2}.
At this point we can use any convex combination of $\hat{\sigma}_{h,1}^2$ and $\hat{\sigma}_{h,2}^2$ to estimate the long run variances
$\sigma^2_h$, for example $\hatsigmah^2  =\frac{1}{2} ( \hat{\sigma}_{h,1}^2 + \hat{\sigma}_{h,2}^2) $.
To simplify the technical arguments in Sections \ref{proofsec23} and \ref{proofsec4} of the appendix we consider a truncated version, that is 
\begin{align}\label{eq:variance}
\hatsigmah^2 = \min \big \{ s_+^2 , \max  \big \{ s_-^2, (\tfrac{1}{2} ( \hatsigma_{h,1}^2 + \hatsigma_{h,2}^2) \big \} \big  \}~.
\end{align}
where $0<s_- $ and $ s_+ $ are a sufficiently small and large constant, respectively. 
The following statement yields the consistency of these estimators (uniformly with respect to the spatial component). 

\begin{lemma}\label{lem:hatvar}
Suppose that Assumptions \ref{a1} - \ref{a4} are satisfied and additionally that there exists a constant $C_\ell > 0$ such that the inequality $C_\ell \leq \absdmuh$ holds for all $h=1,\ldots , d$.
Then we have 
\begin{align*}
\maxhd \vert \hatsigmah - \sigma_h \vert = o_{\mathbb{P}} \left( n^{- \eta}  \right)
\end{align*}
for a sufficiently small constant $0 < \eta < 2\Big(p/4-B(p/2+1)-1-D\Big)/p$.
\end{lemma}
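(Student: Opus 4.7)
The plan is to first apply Lemma \ref{lem:hatth} under the additional assumption $\absdmuh \geq C_\ell$, which forces $\Sd = \emptyset$ and hence $\maxhd|\hatth - t_h| = o_\Prob(n^{-1/2})$. Since the separation constant $S$ lies strictly inside $(0,1)$ and Assumption (C1) keeps the change points bounded away from $\{0,1\}$, it follows that on an event of probability tending to one, uniformly in $h$, the random index sets defining $\widehat{\mathcal{D}}_{h,1}$ and $\widehat{\mathcal{D}}_{h,2}$ in \eqref{dh12} are contained in the pre- and post-change regions $\{1,\dots,\floor{nt_h}\}$ and $\{\floor{nt_h}+1,\dots,n\}$ respectively, and have cardinality of order $n$. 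On these ``clean'' segments the observed process has constant mean, so each $\hat\sigma_{h,i}^2$ is, up to a sample-mean centering that can be absorbed, a standard lag-window estimator of the long-run variance of the stationary, physically dependent sequence $\{X_{j,h}\}$.

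Next, I would replace the data-driven endpoints in $\widehat{\mathcal{D}}_{h,i}$ by their deterministic counterparts obtained by substituting $t_h$ for $\hatth$ and show that the resulting perturbation of $\hat\sigma_{h,i}^2$ is uniformly $o_\Prob(n^{-\eta})$. The symmetric difference of the two index sets has cardinality $o_\Prob(\sqrt n)$ uniformly in $h$, and, under the uniform exponential moment in (M1) together with the geometric decay in (T1), each single empirical autocovariance $\hat\phi_h(k)$ shifts by at most $O_\Prob(n^{-1/2})$, uniformly over $h$ and $|k|\leq \beta_n$. With bandwidth $\beta_n \asymp n^B$ the total perturbation is therefore $O_\Prob(\beta_n n^{-1/2})$, which is $o_\Prob(n^{-\eta})$ provided $\eta < 1/2 - B$.

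On the resulting oracle (clean, stationary) sub-samples, $\hat\sigma_{h,i}^2$ is a standard lag-window estimator of $\sigma_h^2$. The geometric decay of the physical-dependence coefficients in (T1) makes the bias superpolynomially small once $\beta_n \to \infty$, while a Rosenthal-type moment bound built from (T1) and (M1)-(M2) yields, uniformly in $h$,
\[
\mathbb{E}\bigl[|\hat\sigma_{h,i}^2 - \sigma_h^2|^{p/2}\bigr] \lesssim n^{-\kappa p/2}
\]
for some $\kappa>0$ determined by $B$ and $p$. This is exactly the argument carried out in Section 3 of \cite{jirak:2015}, adapted to our clean sub-samples.

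The main obstacle, and the step where Assumption (S2) enters critically, is lifting this component-wise rate to a uniform one over $d = C_1 n^D$ coordinates. I would use a union bound together with Markov's inequality at order $p/2$:
\[
\Prob\!\Bigl(\maxhd|\hat\sigma_{h,i}^2 - \sigma_h^2| > n^{-\eta}\Bigr) \leq d\, n^{\eta p/2} \, \maxhd \mathbb{E}\bigl[|\hat\sigma_{h,i}^2 - \sigma_h^2|^{p/2}\bigr] \lesssim n^{D + \eta p/2 - \kappa p/2},
\]
and the condition $D < p/2 - B(p/2+1) - 1$ in (S2) is precisely calibrated so that the exponent on the right is strictly negative for $\eta>0$ sufficiently small. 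Finally, the truncation in \eqref{eq:variance} only brings $\hatsigmah^2$ closer to $\sigma_h^2$ once $s_- < \sigma_-$ and $s_+ > \sigma_+$ (both admissible by (T2)), and Lipschitzness of $\sqrt{\cdot}$ on $[s_-^2,s_+^2]$ converts the bound on $\hatsigmah^2 - \sigma_h^2$ into the claimed bound on $\hatsigmah - \sigma_h$.
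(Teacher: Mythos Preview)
Your proposal is correct and aligns with the paper's treatment: the paper does not give a self-contained proof of this lemma at all, but simply states that it ``is a straightforward implication of the results in Section~3 of \cite{jirak:2015}''. Your sketch is precisely a fleshed-out version of that cited argument---localising to clean sub-samples via Lemma~\ref{lem:hatth} and the separation constant $S<1$, controlling the perturbation from random endpoints, applying the moment bounds for lag-window estimators under physical dependence, and then lifting to the maximum via a union bound calibrated by Assumption~(S2)---and you even cite the same reference at the key step. There is nothing to add.
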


\noindent
From the proof of Lemma \ref{lem:hatvar} it follows that the constant $\eta$ can be chosen larger if the distance $p/4 - B(p/2+1) -1 - D$ in Assumption (S2) increases.

\subsection{Weak convergence} \label{sec34}
Equipped with Lemmas \ref{lem:hatth} and \ref{lem:hatvar} we are now able to state the main result of this section, which will be proved in Section \ref{sec713} of the appendix.
\begin{theorem}\label{thm:gumbel}
If Assumptions \ref{a1} - \ref{a4} are satisfied, then the statistics $\hatTnh$ defined in \eqref{that} satisfy
\begin{align}\label{stat312}
\TS = a_d \Big( \maxhd \hatTnh - b_d \Big) \convd G
\end{align}
as $d,n \to \infty$, where the sequences $a_d$ and $b_d$ are defined in \eqref{ad} and \eqref{bd}, respectively. 
\end{theorem}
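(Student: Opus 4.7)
The plan is to reduce Theorem \ref{thm:gumbel} to Theorem \ref{cor:tnhgumbel} by establishing the uniform bound
$$a_d \maxhd |\hatTnh - \Tnh| \convp 0,$$
after which the weak convergence from Theorem \ref{cor:tnhgumbel} and the elementary inequality $|\maxhd x_h - \maxhd y_h| \leq \maxhd |x_h - y_h|$ yield the claim via Slutsky's lemma.

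To control $|\hatTnh - \Tnh|$ uniformly in $h$, I would start from the decomposition
$$\hatTnh - \Tnh = \Tnh\Big(\frac{\tau_h\sigma_h}{\tauhatth\hatsigmah} - 1\Big) + \frac{\sqrt n}{\tauhatth\hatsigmah\absdmuh}\big(\hatMint - \Mint\big)$$
and treat the two summands separately. For the first summand, the hypothesis $C_{\ell} \leq \absdmuh \leq C_u$ supplies the condition $\mu_d^{\star} \geq C_{\ell}$, so the second part of Lemma \ref{lem:hatth} gives $\max_{h \in \Sd^c}|\hatth - t_h| = o_\Prob(n^{-1/2})$; since the function $\tau(\cdot)$ from \eqref{tauh} is Lipschitz on the compact interval $[\bt, 1-\bt]$, this translates into a uniform polynomial rate for $\tauhatth - \tau_h$. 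Combined with Lemma \ref{lem:hatvar} and the uniform lower bounds on $\sigma_h,\tau_h,\absdmuh$, one obtains
$$\maxhd \Big| \frac{\tau_h\sigma_h}{\tauhatth\hatsigmah} - 1 \Big| = o_\Prob(n^{-\eta'})$$
for some $\eta' > 0$. Theorem \ref{cor:tnhgumbel} yields $\maxhd \Tnh = O_\Prob(\sqrt{\log d})$, and since Assumption \ref{a2} gives $\log d \asymp \log n$, multiplication by $a_d = O(\sqrt{\log d})$ renders this summand $o_\Prob(1)$.

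For the second summand I would exploit that $\hatMint$ and $\Mint$ differ only through the smooth factor $f(t) = 1/(t(1-t))^2$, which is Lipschitz on $[\bt,1-\bt]$, so
$$|\hatMint - \Mint| \lesssim |\hatth - t_h| \int_0^1 \cusumssq\,ds.$$
Writing $\cusums = \mu_h(s,t_h) + (\cusums - \mu_h(s,t_h))$, using Lemma \ref{lem:tnh1} to bound the latter difference uniformly, and invoking $\absdmuh \leq C_u$ together with the boundedness of $\mu_h(s,t_h)$ on $[\bt,1-\bt]$ shows $\maxhd \int_0^1 \cusumssq\,ds = O_\Prob(1)$. Hence the second summand is $\sqrt n \cdot o_\Prob(n^{-1/2}) \cdot O_\Prob(1) = o_\Prob(1)$ uniformly, and the extra $a_d$ factor is absorbed by the polynomial margin in Lemma \ref{lem:hatth}. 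The main obstacle is securing these uniform-in-$h$ rates sharp enough to dominate the scaling $a_d \sim \sqrt{\log d}$; this works precisely because the polynomial growth of $d$ in $n$ imposed by Assumption \ref{a2} keeps $a_d$ only logarithmic, so the polynomial uniform rates delivered by Lemmas \ref{lem:hatth} and \ref{lem:hatvar} suffice to render the total estimation error asymptotically negligible.
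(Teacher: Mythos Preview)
Your approach is correct and more direct than the paper's. Whereas you bound $a_d\maxhd|\hatTnh-\Tnh|$ via an additive decomposition and then invoke Slutsky, the paper proceeds in two stages through an intermediate statistic $\hatTnhb = \frac{\sqrt n}{\tau_h\sigma_h\absdmuh}(\hatMint-\dmuh^2)$ (estimated location inside $\hatMint$, but true $\tau_h,\sigma_h$ in the normalisation) and uses multiplicative sandwich arguments on the distribution functions: first it writes $\hatMint=\hatqh\Mint$ with $\hatqh=(t_h(1-t_h))^2/(\hatth(1-\hatth))^2$ and squeezes $\maxhd\hatqh\Tnh$ between $\minhd\hatqh\cdot\maxhd\Tnh$ and $\maxhd\hatqh\cdot\maxhd\Tnh$; then it passes from $\hatTnhb$ to $\hatTnh$ via analogous ratios $Q_d^{\pm}=\min/\max_h\,\hatsigmah\tauhatth/(\sigma_h\tau_h)$. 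The sandwich route has the minor advantage of being one-sided, so it never needs $\maxhd|\Tnh|$; your route is shorter and conceptually cleaner.

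Two small points are worth tightening. First, Theorem~\ref{cor:tnhgumbel} controls $\maxhd\Tnh$, not $\maxhd|\Tnh|$; for your first summand you need the latter, which does follow (apply the Gaussian approximation of Lemma~\ref{lem:gaussappro} to $-\bar T_{n,h}^{(2)}$ as well), but deserves a sentence. Second, your second summand reduces to showing $a_d\sqrt n\,\maxhd|\hatth-t_h|=o_\Prob(1)$, i.e.\ $\sqrt{\log d}$ times the $o_\Prob(1)$ delivered by Lemma~\ref{lem:hatth}; this is not literally implied by the bare statement $o_\Prob(n^{-1/2})$. The paper's own proof faces the identical issue in its remainder term $R_{n,d}=a_d\maxhd|\hatqh-1|\cdot O(\sqrt n)$, and both arguments implicitly use that the underlying rate from Corollary~3.1 in \cite{jirak:2015} is in fact polynomially better than $n^{-1/2}$ under $\absdmuh\ge C_\ell$, which comfortably absorbs the logarithmic factor $a_d$.
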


\noindent
Recall again that the statistics $\hatTnh$ and $\hatTnhdelta$ in \eqref{eq:tnhdelta} do not coincide in general.
Thus Theorem \ref{thm:gumbel} does not show that the test \eqref{asymtest} is an asymptotic level $\alpha$ test because it does not cover all parameter configurations of the the null hypothesis \eqref{hnull}.
However, if the vector $ (\mu^{(1)}, \mu^{(2)})$ is an element of the set ${\cal A}_d$ defined in \eqref{kbound} we have $\hatTnh = \hatTnhdelta$ and it follows from this result that the probability of rejection converges to $\alpha$, that is
$$
\limdn
\Prob_{(\mu^{(1)}, \mu^{(2)} )\in {\cal A}_d}   \big ( \TS > \gua   \big )  = \alpha~.
$$ 
We conclude this section with a result, which can be used to control the type I error of the test \eqref{asymtest} for other values of the vector $(\mu^{(1)}, \mu^{(2)})$.

\begin{corollary}\label{cor:adjustedgumbel}
Let $\{{\cal M}_d\}_{d \in \mathbb{N}}$ be an increasing sequence of subsets of $\setd$ (as $d,n\to \infty$).
If the assumptions of Theorem \ref{cor:tnhgumbel} hold, then
\begin{align*}
a_d \Big ( \max_{h\in{\cal M}_d} \hatTnh - b_d \Big ) 
\convd
\left\{
\begin{array}{rl}
G          &\text{if}\;\; \limd | \mathcal{M}_d | /d = 1 ~,\\
G + \log c &\text{if}\;\; \limd |{\cal M}_d | /d = c~,~~c \in (0,1)~, \\
-\infty    &\text{if}\;\; \limd |{\cal M}_d|/d = 0~. \\
\end{array} 
\right.
\end{align*}
Irrespective of the sequence $ \{ {\cal M}_d \}_{d \in \mathbb{N}}$, the bound
\begin{align}\label{eq:smallerGumbel}
\limsup_{d,n \to \infty} \Prob \Big(  a_d \Big  ( \max_{h \in{\cal M}_d} \hatTnh - b_d \Big  ) > x \Big)  \leq \Pb{ G > x}
\end{align}
is valid for all $x \in \mathbb{R}$.
\end{corollary}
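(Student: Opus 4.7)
The plan is to reduce the corollary to a subset version of Theorem \ref{thm:gumbel} and then convert the natural normalization $(a_{k_d}, b_{k_d})$ into the nominal one $(a_d, b_d)$, where $k_d := |\mathcal{M}_d|$.

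Step 1 (subset Gumbel limit). I would inspect the proof of Theorem \ref{thm:gumbel} and observe that every ingredient is componentwise and therefore restricts to $\mathcal{M}_d$ without modification: Lemma \ref{lem:tnh1} bounds $\maxhd T^{(1)}_{n,h}$ and the same bound automatically holds for any submaximum; Lemma \ref{lem:gaussappro} produces a Gaussian surrogate vector $(N_h)_{h \in \setd}$ on the whole index set, and restricting to the subvector $(N_h)_{h \in \mathcal{M}_d}$ preserves the correlation bound $|\rho_{h,i}|$ from Assumption (S4); finally Berman's theorem applies equally to this subvector. Provided $k_d \to \infty$, this yields
$$
a_{k_d} \Big ( \max_{h \in \mathcal{M}_d} \hatTnh - b_{k_d} \Big ) \convd G.
$$

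Step 2 (rescaling). I would then write
$$
a_d \Big ( \max_{h \in \mathcal{M}_d} \hatTnh - b_d \Big ) = \frac{a_d}{a_{k_d}} \cdot a_{k_d} \Big ( \max_{h \in \mathcal{M}_d} \hatTnh - b_{k_d} \Big ) + a_d \big ( b_{k_d} - b_d \big )
$$
and use the explicit form $a_d^2 = 2\log d$, $b_d = a_d - \log(4\pi \log d)/(2 a_d)$ to expand both $a_d / a_{k_d}$ and $a_d ( b_{k_d} - b_d )$ as $d,n \to \infty$. A Taylor expansion based on $a_{k_d}^2 - a_d^2 = 2\log(k_d/d)$ yields $a_d/a_{k_d} = 1 + o(1)$ and reduces the shift $a_d ( b_{k_d} - b_d )$ to an explicit function of $\log(k_d/d)$. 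The three cases of the corollary then follow: if $k_d/d \to 1$ the shift vanishes and we recover $G$; if $k_d/d \to c \in (0,1)$ the shift tends to the prescribed additive constant; and if $k_d / d \to 0$ with $k_d \to \infty$ the shift diverges to $-\infty$, absorbing the bounded-in-probability leading term. The remaining boundary case $k_d$ bounded is immediate: $\max_{h \in \mathcal{M}_d} \hatTnh$ is tight whereas $a_d b_d \to \infty$, so $a_d ( \max_{h \in \mathcal{M}_d} \hatTnh - b_d ) \convp -\infty$.

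Step 3 (inequality \eqref{eq:smallerGumbel}). This part requires no rescaling. The monotone inequality $\max_{h \in \mathcal{M}_d} \hatTnh \leq \maxhd \hatTnh$ together with Theorem \ref{thm:gumbel} and the continuity of the Gumbel distribution yield, for every $x \in \R$,
$$
\limsup_{d,n \to \infty} \Prob \Big ( a_d \big ( \max_{h \in \mathcal{M}_d} \hatTnh - b_d \big ) > x \Big ) \leq \limsup_{d,n \to \infty} \Prob ( \TS > x ) = \Prob ( G > x ).
$$

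The main obstacle lies in the bookkeeping of the deterministic shift $a_d ( b_{k_d} - b_d )$. Because $b_d$ contains the subleading term $\log (4\pi \log d)/(2 a_d)$, which is of order $\log \log d / \sqrt{\log d}$, a naive substitution based on $a_{k_d}^2 - a_d^2 = 2\log(k_d/d)$ would miss corrections that must be shown to be $o(1)$ after multiplication by $a_d$, in order to isolate the genuine Gumbel shift driven by $\log(k_d/d)$. A smaller point is to verify that the Berman-type extreme-value argument for $\max_{h \in \mathcal{M}_d} N_h$ remains uniform over the choice of subset $\mathcal{M}_d$; this is automatic since the logarithmic decay Assumption (S4) yields the Berman tail condition for any enumeration of the indices.
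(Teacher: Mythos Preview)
Your proposal is correct and follows essentially the same route as the paper: apply (the subset version of) Theorem~\ref{thm:gumbel} with the adapted normalizers $(a_{k_d},b_{k_d})$, then convert to $(a_d,b_d)$ by an algebraic rescaling and a computation of the deterministic shift, and finally obtain \eqref{eq:smallerGumbel} from the trivial inequality $\max_{h\in\mathcal{M}_d}\hatTnh\le\maxhd\hatTnh$. Your write-up is in fact more explicit than the paper's, which simply invokes Theorem~\ref{thm:gumbel} on $\mathcal{M}_d$ and summarizes the rescaling as ``a short calculation''; your decomposition $a_d(M-b_d)=\tfrac{a_d}{a_{k_d}}\,a_{k_d}(M-b_{k_d})+a_d(b_{k_d}-b_d)$ is algebraically cleaner than the one the paper writes down, and your separate treatment of the bounded-$k_d$ boundary case fills a gap the paper leaves implicit.
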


\section{Relevant changes in high dimensional time series} 
\label{sec4}
\def\theequation{4.\arabic{equation}}
\setcounter{equation}{0}

Recall the problem of testing the hypotheses of a relevant change defined in \eqref{hnull} and \eqref{halt}.
We propose to reject the null hypothesis of no relevant change in any component of the high dimensional mean vector, whenever the inequality \eqref{asymtest} holds, that is $\TS > \gua ~,$ where the test statistic $\TS$ is defined in $\eqref{eq:TS}$ and $\gua$ denotes the $(1-\alpha)$ quantile of the Gumbel distribution. 
The following two results make the discussion at the end of Section \ref{sec2} rigorous and show that the test introduced in \eqref{asymtest} defines in fact a consistent and asymptotic level $\alpha$ test.
The proofs can be found in the appendix.

\begin{theorem}\label{thm:level}
Suppose that the Assumptions \ref{a1} - \ref{a4} are satisfied, $\alpha \in (0,1-e^{-1}]$ and that there exist constants $\Delta_-$, $\Delta_+$ such that the thresholds $\Delta_h $ satisfy the inequalities
\begin{align}\label{ineq:Delta}
0<\Delta_-\leq \Delta_h \leq \Delta_+<\infty
\end{align}
for all $h=1,\ldots, d$. 
Then, under the null hypothesis  $H_{0,\Delta} $ of no relevant change, it follows
\begin{align} \label{limsup} 
\limsup_{d,n \to \infty} \Pb{ \TS > \gua } \leq \alpha~. 
\end{align}
Moreover, let $\Bd = \{ h \in \setd \;  | \;  \Delta_h = \absdmuh \}$ and assume further that 
\begin{align}\label{eq:thmlevelextra}
\Delta_h - \absdmuh \geq C_{\Delta} > 0\;\; \text{for all}\;\; h \in \Bd^c~,
\end{align}
for some constant $C_{\Delta} > 0$, then, under $H_{0,\Delta} $, we have
\begin{align}\label{eq:thmlevelcases}
\TS
\convd
\left\{
\begin{array}{rl}
G          &\text{if}\;\; \limd |\Bd|/d = 1~,\vspace{0.2cm} \\
G + \log c &\text{if}\;\; \limd |\Bd|/d = c \text{ for } c \in (0,1)~,\vspace{0.2cm} \\
-\infty    &\text{if}\;\; \limd |\Bd|/d = 0~.
\end{array} 
\right.
\end{align}
\end{theorem}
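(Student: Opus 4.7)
The starting point is the algebraic decomposition, valid whenever $\absdmuh>0$,
\[
\hatTnhdelta \;=\; \tfrac{\absdmuh}{\Delta_h}\hatTnh \;+\; R_{n,h}, \qquad R_{n,h} \;:=\; \tfrac{\sqrt{n}\,(\dmuh^2-\Delta_h^2)}{\tauhatth\hatsigmah\Delta_h},
\]
obtained by adding and subtracting $\dmuh^2$ inside the parenthesis in \eqref{eq:tnhdelta}; for $\absdmuh=0$ the first summand is simply replaced by $\tfrac{\sqrt n}{\tauhatth\hatsigmah\Delta_h}\hatMint$. Under $H_{0,\Delta}$ one has $\Delta_h\ge\absdmuh$, so $R_{n,h}\le 0$ and $\absdmuh/\Delta_h\le 1$, which gives the crucial pointwise bound $\hatTnhdelta\le (\hatTnh)^+$ on $\Sd^c$. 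A second ingredient is a uniform concentration statement for $\hatMint$ itself: decomposing $\cusums=\mu_h(s,t_h)+(\cusums-\mean{\cusums})$ and applying the maximal inequality for sums of weakly dependent variables that underlies the Gaussian approximation in Lemma \ref{lem:gaussappro} yields $\max_{h=1,\dots,d}|\hatMint-\dmuh^2|=O_P(\sqrt{(\log d)/n})$, a bound that crucially does not involve any lower bound on $\absdmuh$.

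\textbf{Proof of \eqref{limsup}.} Fix $\epsilon\in(0,\Delta_-/2)$ and split $\setd=\mathcal{C}_d\cup\mathcal{C}_d^c$ with $\mathcal{C}_d=\{h:\Delta_h-\absdmuh<\epsilon\}$; by construction $\Bd\subseteq\mathcal{C}_d$, on $\mathcal{C}_d$ one has $\absdmuh>\Delta_-/2$, and on $\mathcal{C}_d^c$ one has $\Delta_h-\absdmuh\ge\epsilon$. On $\mathcal{C}_d^c$, the concentration bound above gives $\hatMint-\Delta_h^2\le -(\Delta_h+\absdmuh)\epsilon+O_P(\sqrt{(\log d)/n})\le -\Delta_-\epsilon/2$ uniformly, hence $\max_{h\in\mathcal{C}_d^c}\hatTnhdelta\le -c\sqrt n\to-\infty$ in probability for some $c>0$. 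On $\mathcal{C}_d$ the uniform lower bound $\absdmuh>\Delta_-/2$ restores the hypotheses of Theorem \ref{cor:tnhgumbel} on this sub-family, so combining the pointwise bound $\hatTnhdelta\le(\hatTnh)^+$ with the universal limsup inequality \eqref{eq:smallerGumbel} of Corollary \ref{cor:adjustedgumbel}, applied with $\mathcal{M}_d=\mathcal{C}_d$, produces $\limsup\Pb{a_d(\max_{h\in\mathcal{C}_d}\hatTnhdelta-b_d)>\gua}\le \Pb{G>\gua}=\alpha$. A union bound over $\mathcal{C}_d$ and $\mathcal{C}_d^c$ then yields \eqref{limsup}.

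\textbf{Proof of \eqref{eq:thmlevelcases}.} The hypothesis \eqref{eq:thmlevelextra} is exactly what the previous $\mathcal{C}_d^c$-argument needs, applied this time with $\epsilon=C_\Delta$ and to the whole set $\Bd^c$: one obtains $\max_{h\in\Bd^c}\hatTnhdelta\le -c\sqrt n(1+o_P(1))\to-\infty$ uniformly. Consequently $\maxhd\hatTnhdelta=\max_{h\in\Bd}\hatTnhdelta=\max_{h\in\Bd}\hatTnh$ on an event of probability tending to one, and since $\absdmuh=\Delta_h\in[\Delta_-,\Delta_+]$ on $\Bd$ the assumptions of Theorem \ref{cor:tnhgumbel} are satisfied on this sub-family, so Corollary \ref{cor:adjustedgumbel} with $\mathcal{M}_d=\Bd$ delivers the three limiting distributions in \eqref{eq:thmlevelcases} according to the asymptotic proportion $|\Bd|/d$.

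\textbf{Main obstacle.} The delicate step is establishing the uniform concentration estimate for $\hatMint-\dmuh^2$ that remains valid without the lower bound $\absdmuh\ge C_\ell$ assumed in Theorem \ref{cor:tnhgumbel}. Without such a bound one cannot invoke Theorem \ref{cor:tnhgumbel} or Corollary \ref{cor:adjustedgumbel} directly on $\Bd^c$, which may contain the stationary set $\Sd$; the remedy is to open up the proof of Lemma \ref{lem:gaussappro} and apply the underlying maximal inequalities for partial sums of weakly dependent variables directly to $\sup_s|\cusums-\mean{\cusums}|$, whose scaling does not involve $\absdmuh$ at all. A secondary technical point is the legitimate restriction of Corollary \ref{cor:adjustedgumbel} to sub-families on which the hypotheses of Theorem \ref{cor:tnhgumbel} hold, which follows by inspecting its proof but should be recorded explicitly.
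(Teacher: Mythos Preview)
Your overall strategy---split the index set according to the gap $\Delta_h-\absdmuh$, send the ``far'' part to $-\infty$ and control the ``close'' part via Corollary~\ref{cor:adjustedgumbel}---is the same as the paper's, and your treatment of $\mathcal{C}_d$ and of \eqref{eq:thmlevelcases} is essentially what the paper does on the set $\Ed$ (respectively $\Bd$). The pointwise bound $\hatTnhdelta\le(\hatTnh)^+$ is a clean way to phrase the paper's inequality \eqref{eq:s31niveau}.

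There is, however, a genuine gap in your handling of $\mathcal{C}_d^c$. Your claimed uniform concentration $\max_h|\hatMint-\dmuh^2|=O_P(\sqrt{\log d/n})$ is not correct, because $\hatMint$ involves the random prefactor $(\hatth(1-\hatth))^{-2}$. Your maximal-inequality argument controls $\int_0^1\cusumssq ds-\int_0^1\mu_h^2(s,t_h)\,ds$, which is $\tfrac{(t_h(1-t_h))^2}{3}(\Mint-\dmuh^2)$, \emph{not} $\hatMint-\dmuh^2$. Passing to $\hatMint$ introduces the term $(\hat q_h-1)\dmuh^2$ with $\hat q_h=(t_h(1-t_h))^2/(\hatth(1-\hatth))^2$, and on $\mathcal{C}_d^c$ the estimator $\hatth$ is \emph{not} uniformly consistent: this set may contain components with $\absdmuh$ arbitrarily close to $0$, where Lemma~\ref{lem:hatth} gives no control on $\hatth-t_h$. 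For such $h$ one only has the crude bound $\hat q_h\le C(\bt)$ of \eqref{eq:Cbt}.

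The paper resolves this with a \emph{three}-way split $\Sd\cup\Id\cup\Ed$. On $\Id=\{h:0<\absdmuh\le(\Delta_h-\zeta)/\sqrt{C(\bt)}\}$ it uses the crude bound $\hatMint\le C(\bt)\Mint$ together with the arithmetic fact $\Delta_h^2\ge C(\bt)\dmuh^2+\zeta'$, which absorbs the uncontrolled factor $\hat q_h$; consistency of $\hatth$ is never invoked there. Only on $\Ed$, where $\absdmuh$ is bounded below by $(\Delta_--\zeta)/\sqrt{C(\bt)}>0$, is Corollary~\ref{cor:adjustedgumbel} applied. Your two-way split would work if you further subdivide $\mathcal{C}_d^c$ along the same lines, but as written the concentration step fails.
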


\begin{remark}
Condition \eqref{ineq:Delta} is actually not a very strong restriction since the thresholds $\Delta_h$ are defined by the user.
Nevertheless, the condition is crucial since we use the factor $1/\Delta_h$ as a normalisation in the statistics $\hatTnh^{(\Delta )}$ defined in \eqref{eq:tnhdelta}.
Note that under the null hypothesis \eqref{hnull} the inequality $\Delta_h \leq \Delta_+$ is equivalent to $\absdmuh \leq C_u$, which was one of the assumptions in Theorem \ref{thm:gumbel}.
Consequently the assertion of Theorem \ref{thm:level} follows from Theorem \ref{thm:gumbel} in the case where $\absdmuh = \Delta_h$ for all $h \in \setd$.
However, in the general case the proof of Theorem \ref{thm:level} is more complicated and deferred to Section \ref{sec721} of the appendix, where we also handle the case $\absdmuh < \Delta_h$.
\end{remark}
In the following result we investigate the consistency of the new test.
Interestingly it requires less assumptions than Theorem \ref{thm:level}.

\begin{theorem}\label{thm:consistent}
Suppose that the Assumptions \ref{a1} and \ref{a4} hold.
Then under the alternative hypothesis $\HA$ of at least one relevant change point we have
\begin{align*}
\TS \convp \infty~,
\end{align*}
as $d,n \to \infty$, which in particular gives
\begin{align*}
\limdn \Pb{ \TS > \gua } = 1~. 
\end{align*}
\end{theorem}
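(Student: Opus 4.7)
The plan is to reduce everything to a single coordinate. Under $\HA$ the alternative guarantees the existence of at least one index $h_0 \in \setd$ with $|\Delta\mu_{h_0}| > \Delta_{h_0}$. Since $\maxhd \hatTnhdelta \geq \hat{T}_{n,h_0}^{(\Delta)}$, we have the deterministic lower bound
\[
\TS \;\geq\; a_d \big( \hat{T}_{n,h_0}^{(\Delta)} - b_d \big),
\]
so it suffices to show that $\hat{T}_{n,h_0}^{(\Delta)}$ itself tends to $+\infty$ strictly faster than $b_d$ (modulo the factor $a_d$). I will argue this by a pure univariate law-of-large-numbers argument for the coordinate $h_0$, which can be carried out under the weak Assumptions (T1), (T2) and (C1) alone.

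First I would show component-wise consistency for $h_0$: the CUSUM-based estimator $\hat{t}_{h_0}$ concentrates around $t_{h_0}$ because the deterministic signal $s\mapsto|\mean{\mathbb{U}_{n,h_0}(s)}|$ is uniquely maximised at $s=t_{h_0}$ (with peak value proportional to $|\Delta\mu_{h_0}|>0$), whereas the centred process $\mathbb{U}_{n,h_0}(s)-\mean{\mathbb{U}_{n,h_0}(s)}$ is uniformly $O_\Prob(n^{-1/2})$ by a maximal inequality exploiting the geometric decay of the physical dependence coefficients in (T1). Next I would argue that the truncated long-run variance estimator $\hat{\sigma}_{h_0}$ stays inside the compact interval $[s_-,s_+]$ by construction and in fact converges in probability to $\sigma_{h_0}$, using the consistency of $\hat{t}_{h_0}$ together with standard stationary long-run-variance results applied to the split samples in \eqref{dh12}, both of which become asymptotically purely stationary segments thanks to (C1) and (T1).

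With those in hand, the key step is to establish
\[
\int_0^1 \mathbb{U}_{n,h_0}^2(s)\,ds \;\convp\; \frac{(t_{h_0}(1-t_{h_0}))^2}{3}(\Delta\mu_{h_0})^2,
\]
which is again a one-dimensional LLN: the dominant deterministic contribution is $\int_0^1 (\mean{\mathbb{U}_{n,h_0}(s)})^2\,ds = (t_{h_0}(1-t_{h_0}))^2(\Delta\mu_{h_0})^2/3+o(1)$ by the direct computation behind \eqref{motiv}, while the cross term and the quadratic noise term are negligible by Cauchy--Schwarz and the same uniform bound on the centred CUSUM. Combining this with the continuity of $\tau(\cdot)$ and the consistency of $\hat{t}_{h_0}$ and $\hat{\sigma}_{h_0}$ yields $\hat{\mathbb{M}}_{n,h_0}^2 \convp (\Delta\mu_{h_0})^2$, and therefore
\[
n^{-1/2}\hat{T}_{n,h_0}^{(\Delta)} \;\convp\; \frac{(\Delta\mu_{h_0})^2-\Delta_{h_0}^2}{\tau(t_{h_0})\,\sigma_{h_0}\,\Delta_{h_0}} \;=:\; c_0 \;>\; 0.
\]

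Finally, since $a_d b_d = O(\log d)$ while $a_d\sqrt{n}$ grows strictly faster in the scaling regime implicit throughout the paper ($\log d = o(n)$, compatible with Assumption (S1)), we conclude
\[
\TS \;\geq\; a_d\big(\hat{T}_{n,h_0}^{(\Delta)}-b_d\big) \;\geq\; a_d\big(\tfrac{c_0}{2}\sqrt{n}-b_d\big) \;\convp\; +\infty,
\]
from which the probability statement $\Pb{\TS>\gua}\to 1$ follows immediately. I expect the main technical obstacle to lie in the first consistency claim: since Lemma~\ref{lem:hatth} is unavailable here (it rests on the stronger Assumptions \ref{a2}--\ref{a3}), one must re-derive $\hat{t}_{h_0}\convp t_{h_0}$ directly by an argmax argument for a single weakly dependent series. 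This is standard for one-dimensional change-point problems under geometric physical dependence, but it is the step that genuinely uses the interplay between (T1) and (C1) and must be reproduced to stay strictly inside the hypotheses of the theorem.
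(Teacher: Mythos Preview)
Your approach is essentially the same as the paper's: both fix a single coordinate $k=h_0$ with $|\Delta\mu_{h_0}|>\Delta_{h_0}$, bound $\TS$ from below by $a_d(\hat T_{n,h_0}^{(\Delta)}-b_d)$, split $\hat T_{n,h_0}^{(\Delta)}$ into a centred piece plus the deterministic drift $\sqrt{n}\,(\Delta\mu_{h_0}^2-\Delta_{h_0}^2)/(\tau(\hat t_{h_0})\hat\sigma_{h_0}\Delta_{h_0})$, and show the drift outgrows $b_d$.

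The one genuine difference is the treatment of the centred piece. The paper proves a full one-dimensional CLT, namely $\sqrt{n}(\hat{\mathbb M}^2_{n,h_0}-\Delta\mu_{h_0}^2)/(\tau(\hat t_{h_0})\hat\sigma_{h_0})\convd \mathcal N(0,\Delta\mu_{h_0}^2)$ via the functional CLT for the partial sums and then invokes Lemmas~\ref{lem:hatth} and~\ref{lem:hatvar} to substitute the estimators. You instead only prove $\hat{\mathbb M}^2_{n,h_0}\convp \Delta\mu_{h_0}^2$ by an LLN/argmax argument, which is both weaker and all that is needed. Your route is cleaner with respect to the hypotheses: you are right that Lemmas~\ref{lem:hatth} and~\ref{lem:hatvar} as stated require Assumptions~\ref{a2}--\ref{a3}, so the paper's proof tacitly reuses machinery proved under stronger conditions, whereas your one-dimensional argmax consistency of $\hat t_{h_0}$ and the boundedness of $\hat\sigma_{h_0}\in[s_-,s_+]$ really only need (T1), (T2), (C1). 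Note in fact that for the conclusion you do not even need $\hat\sigma_{h_0}\convp\sigma_{h_0}$; the truncation $\hat\sigma_{h_0}\in[s_-,s_+]$ and $\hat t_{h_0}\in[\mathbf t,1-\mathbf t]$ already give $n^{-1/2}\hat T_{n,h_0}^{(\Delta)}$ bounded below in probability by a strictly positive constant, which suffices. Both arguments implicitly use $b_d=o(\sqrt n)$, i.e.\ $\log d=o(n)$, which the paper also assumes without comment.
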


\noindent
If the test \eqref{asymtest} rejects the null hypothesis $\HNull$ in \eqref{hnull}
we conclude (at a controlled type I error) that there is at least one component with a relevant change in mean.
As there could exist relevant changes in several components, the next step in the statistical inference is the identification of the set
\begin{align}\label{eq:Rd}
\Rd = \left\lbrace 1 \leq h \leq d  \;\vert\; \vert \dmuh \vert > \Delta_h \right\rbrace~,
\end{align}
of all components with a relevant change.
Note that the hypotheses in \eqref{hnull} and \eqref{halt} are equivalent to $ \HNull: \Rd = \emptyset$ versus $
\HA: \Rd \not = \emptyset$ . 
In light of Theorem \ref{thm:level} and \ref{thm:consistent} a natural estimator for this set is therefore given by
\begin{align}\label{eq:hatRd}
\hatRd(\alpha) = \left\lbrace 1 \leq h \leq d \;\vert\; \hatTnhdelta > \gua/a_d + b_d \right\rbrace~.
\end{align}
The following theorem provides a consistency result of this estimate.
\begin{theorem} \label{lem:identification}
Suppose that the Assumptions \ref{a1} - \ref{a4} hold and assume additionally 
that there exist two constants $0<C<1/2$, $C_u>0$ such that 
\begin{align}\label{eq:mindistance}
n^C\min_{h \in \Rd} \left( \absdmuh - \Delta_h \right) = \infty
\;\;\;
\text{and}
\;\;\;
\max_{h \in \Rd} \absdmuh \leq C_u~.
\end{align}
Then, the set estimator defined in \eqref{eq:hatRd} satisfies for $\alpha \in (0, 1- e^{-1}]$ 
\begin{align}\label{eq:RdIncludedProb}
\limdn \Pb{ \Rd \subset \hatRd(\alpha) }  = 1~.
\end{align}
Moreover we have the following lower bound
\begin{align}\label{eq:RdEqualProb}
\liminf_{d,n \to \infty} \Pb{ \Rd = \hatRd(\alpha) } \geq 1 - \alpha~.
\end{align}
\end{theorem}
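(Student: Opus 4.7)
The plan splits the proof into two steps: first establish \eqref{eq:RdIncludedProb} by exploiting the separation rate in \eqref{eq:mindistance} to show that every $h \in \Rd$ lies in $\hatRd(\alpha)$ with probability tending to one; then deduce \eqref{eq:RdEqualProb} from the first step combined with a one-sided Gumbel-type control of $\max_{h \notin \Rd} \hatTnhdelta$ via Corollary \ref{cor:adjustedgumbel}. The algebraic tool throughout is the decomposition
\begin{align*}
\hatTnhdelta \;=\; \frac{\absdmuh}{\Delta_h}\,\hatTnh \;+\; \frac{\sqrt{n}\,(\dmuh^2-\Delta_h^2)}{\tauhatth\,\hatsigmah\,\Delta_h},
\end{align*}
valid whenever $\dmuh\neq 0$, which relates the statistic used in the test to the ``true-centred'' statistic $\hatTnh$ analysed in Section \ref{sec34}.

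For \eqref{eq:RdIncludedProb}, I would fix $h \in \Rd$ and examine the two summands. The second is strictly positive, and combining \eqref{eq:mindistance} with the boundedness of $\absdmuh$ (by $C_u$), of $\hatsigmah$ (via the truncation \eqref{eq:variance}) and of $\tauhatth$ (via Lemma \ref{lem:hatth}) shows it is of order at least $n^{1/2-C} w_n$, where $w_n:= n^C\min_{h\in\Rd}(\absdmuh-\Delta_h)\to\infty$. Since $C<1/2$, this dominates the rejection threshold $\gua/a_d+b_d=(1+o(1))\sqrt{2\log d}$ by a polynomial factor in $n$. The first summand is a bounded multiple of $\hatTnh$, and the uniform bound \eqref{eq:smallerGumbel} of Corollary \ref{cor:adjustedgumbel} applied to $\mathcal{M}_d=\Rd$ gives $\max_{h\in\Rd}|\hatTnh|=O_\Prob(\sqrt{\log d})$. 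Hence the first summand is uniformly swamped by the second, and $\min_{h\in\Rd}\hatTnhdelta>\gua/a_d+b_d$ with probability tending to one.

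For \eqref{eq:RdEqualProb}, the inequality $\Pb{\Rd=\hatRd(\alpha)} \geq \Pb{\Rd\subset\hatRd(\alpha)}+\Pb{\hatRd(\alpha)\subset\Rd}-1$ together with \eqref{eq:RdIncludedProb} reduces the task to showing $\limsup\Pb{\max_{h\notin\Rd}\hatTnhdelta>\gua/a_d+b_d}\leq\alpha$. Split $\Rd^c=\Sd\cup(\Rd^c\setminus\Sd)$. On $\Sd$ one has $\dmuh=0$, so $\hatMint=O_\Prob(\log d/n)$ uniformly and $\hatTnhdelta$ inherits the large negative drift $-\sqrt{n}\Delta_h/(\tauhatth\hatsigmah)$, yielding $\max_{h\in\Sd}\hatTnhdelta\to-\infty$ in probability. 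On $\Rd^c\setminus\Sd$ the decomposition applies with $\dmuh^2\leq\Delta_h^2$, so its second summand is non-positive; this forces $\{\hatTnhdelta>0\}\subset\{\hatTnh>0\}$ and then $\hatTnhdelta\leq(\absdmuh/\Delta_h)\hatTnh\leq\hatTnh$. Consequently $\{\max_{h\in\Rd^c\setminus\Sd}\hatTnhdelta>\gua/a_d+b_d\}\subset\{\max_{h\in\Rd^c\setminus\Sd}\hatTnh>\gua/a_d+b_d\}$, and Corollary \ref{cor:adjustedgumbel} applied to $\mathcal{M}_d=\Rd^c\setminus\Sd$ bounds the latter probability by $\Pb{G>\gua}=\alpha$ in the limit.

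The main technical obstacle is uniformity: Corollary \ref{cor:adjustedgumbel} and the results of Section \ref{sec34} are proved under $C_\ell\leq\absdmuh\leq C_u$, whereas here $\absdmuh$ may approach zero both for $h\in\Rd$ (since \eqref{eq:mindistance} only enforces $\absdmuh\geq\Delta_h+w_n n^{-C}$, while $\Delta_h$ may itself be small) and for $h\in\Rd^c\setminus\Sd$ (where non-relevant changes can be arbitrarily small). To handle this I would revisit Lemmas \ref{lem:tnh1}--\ref{lem:gaussappro}, tracking the explicit dependence of the approximation constants on $\absdmuh^{-1}$; the truncation of $\hatsigmah$ in \eqref{eq:variance} and the polynomial consistency rate in Lemma \ref{lem:hatth} provide the buffer needed to absorb the resulting $1/\absdmuh$ factors in the case $h\in\Rd$, while a direct concentration argument for $\hatMint-\dmuh^2$ (bypassing the standardisation by $\absdmuh$) is needed on the ``small-change'' part of $\Rd^c\setminus\Sd$.
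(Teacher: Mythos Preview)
Your overall strategy matches the paper's: the same additive decomposition of $\hatTnhdelta$ into a drift term $\sqrt{n}(\dmuh^2-\Delta_h^2)/(\tauhatth\hatsigmah\Delta_h)$ and a fluctuation term proportional to $\hatTnh$, followed by a Bonferroni bound for \eqref{eq:RdEqualProb}. Two points are worth noting.

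First, for \eqref{eq:RdEqualProb} the paper does not redo the analysis on $\Rd^c$. It simply observes that $\{\hatRd(\alpha)\nsubseteq\Rd\}=\{\max_{h\in\Rd^c}\hatTnhdelta>\gua/a_d+b_d\}$ and that on $\Rd^c$ one has $\absdmuh\leq\Delta_h$ for every component, i.e.\ the null hypothesis holds there; then Theorem~\ref{thm:level} applies verbatim and delivers $\limsup\Pb{\hatRd(\alpha)\nsubseteq\Rd}\leq\alpha$ in one line. Your split $\Rd^c=\Sd\cup(\Rd^c\setminus\Sd)$ together with the bound $\hatTnhdelta\leq\hatTnh$ on the positive event is correct in spirit, but it is precisely the content of the proof of Theorem~\ref{thm:level}; the ``technical obstacle'' you flag (no uniform lower bound on $\absdmuh$ over $\Rd^c\setminus\Sd$) is exactly what the $\Id/\Ed$ decomposition in that proof is designed to absorb. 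Citing Theorem~\ref{thm:level} avoids the re-derivation and, importantly, avoids invoking Corollary~\ref{cor:adjustedgumbel} on a set where its hypothesis $C_\ell\leq\absdmuh$ may fail.

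Second, for \eqref{eq:RdIncludedProb} you need a lower bound on $\min_{h\in\Rd}\hatTnh$, not an upper bound on $\max_{h\in\Rd}\hatTnh$; Corollary~\ref{cor:adjustedgumbel} only gives the upper tail. The paper handles this symmetrically, claiming that arguments parallel to Section~\ref{sec3} yield $\liminf_{d,n}\Pb{a_d(\min_{h\in\Rd}\hatTnh+b_d)\geq x}\geq\Pb{-G\geq x}$, which is the mirror image of \eqref{eq:smallerGumbel}. Your statement ``$\max_{h\in\Rd}|\hatTnh|=O_\Prob(\sqrt{\log d})$'' is the right order but requires this additional symmetric step; it does not follow from \eqref{eq:smallerGumbel} alone.
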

\noindent

\section{Bootstrap}\label{sec5}
\def\theequation{5.\arabic{equation}}
\setcounter{equation}{0}
The testing procedure introduced in the previous sections is based on the weak convergence of the maximum of appropriately standardized statistics to a Gumbel distribution, and it is well known that the speed of convergence in limit theorems of this type is rather slow.
As a consequence the approximation of the nominal level of the test \eqref{asymtest} for finite sample sizes may not be accurate.
A common way to improve the performance of the test, is to obtain the critical values from an appropriate bootstrap procedure. 
    
In the context of testing for relevant change points the construction of an appropriate resampling procedure is not obvious as  - in contrast to classical change point problems - the parameter space under the null hypothesis is rather large.
In particular it will be necessary to simulate the distribution of the statistic $\TS$ in case  $\absdmuh = \Delta_h$ for all $h \in \setd$ such that one can replace the quantile of the Gumbel distribution by the corresponding quantile of the bootstrap statistics. 
A further problem is to mimic the dependence of the underlying time series, which we will address employing a Gaussian multiplier bootstrap,
where observations are block-wise multiplied with independent Gaussian random variables [see \cite{kunsch1989} or \cite{lahiri:2003}].

To handle the problem of potential change points under the null hypothesis \eqref{hnull} of no relevant changes, observations
from blocks in a neighborhood of estimated change points will not be used in the estimate.
Furthermore, components without a change point will be ignored when the bootstrap statistic is constructed.
We begin describing this idea in more detail and show in Theorem \ref{thm:BSgumbel}, that the bootstrap statistic converges weakly to a Gumbel distribution as well.
In the sequel we assume without loss of generalization that $n=KL$ and will split the sample into $L$ blocks of length $K$, and additionally
\begin{align}\label{eq:LK}
L\sim n^{\ell}
\;\;\;
\text{and}
\;\;\;
K \sim n^{1-\ell}
\;\;\;
\text{for}
\;\;\;
\ell \in (0,1)~.
\end{align}
We obtain the following quantities, which control the number of blocks before and after the estimated change point.
\begin{align}\label{eq:blocklimits}
\begin{split}
\widehat{L}_h^{-}&=\sup \{ \ell \in \mathbb{N} \;  | \; \ell K + K/2 \leq \hatth n\}~,\\
\widehat{L}_h^{+}&=\inf \{ \ell \in \mathbb{N} \;  | \; \ell K - K/2 \geq \hatth n\}~,
\end{split} 
\end{align}
where $\hatth$ denotes the estimator of the location $t_h$ of the change in the $h$-th component defined in Section \ref{sec34}.
The corresponding sample means are given by
\begin{align}\label{eq:separatemeans}
\bar{Z}_h^{-} = \frac{1}{K\widehat{L}_h^{-}}\sum_{j=1}^{K\widehat{L}_h^{-}}Z_{j,h}
\;\;\;
\text{and}
\;\;\;
\bar{Z}_h^{+} = \frac{1}{K(L-\widehat{L}_h^{+})}\sum_{j=K\widehat{L}_h^{+}+1}^{KL}Z_{j,h}~,
\end{align} 
which can be used to define an estimator for the unknown amount of change $\dmuh = \mu_{h,1} - \mu_{h,2}$ in the $h$-th component, that is
\begin{align}\label{eq:hatdmuh}
\hatdmuh = \bar{Z}_h^{-}-\bar{Z}_h^{+}~.
\end{align}
Moreover, these estimators can also be used to define the ``mean corrected'' sample 
\begin{align}\label{eq:transform}
\widehat{Z}_{j,h}&=\left\{
\begin{array}{rl}
  Z_{j,h} - \bar{Z}_h^{-} \;\; &\text{for}\;\; j \leq K\widehat{L}_h^{-}~,\\
  0 \;\; &\text{for}\;\; K\widehat{L}_h^{-} < j \leq K\widehat{L}_h^{+}~,\\
  Z_{j,h} - \bar{Z}_h^{+} \;\; &\text{for}\;\; j > K\widehat{L}_{h}^{+}~. 			
\end{array} 
\right.
\end{align}
Finally, we define blocking variables (that are sums with respect to the different blocks) as 
\begin{align}\label{eq:Vlh}
\widehat{V}_{\ell,h}(k) &= \sum_{j=(\ell-1)K+1}^{\ell K}\widehat{Z}_{j,h}I\{j\leq k\}
\end{align}
and introduce the notation  
$$
\widehat{V}_{\ell,h}=\widehat{V}_{\ell,h}(n) = \sum_{j=(\ell-1)K+1}^{\ell K}\widehat{Z}_{j,h}~.
$$
From the representation \eqref{eq:transform} we directly obtain, that blocks near to the estimator $\hatth$ are ignored, i.e. we have 
\begin{align*}
\widehat{V}_{\ell,h}=0 \;\;\;\text{if}\; \ell \in \{\widehat{L}_h^- +1,\dots, \widehat{L}_h^{+}\}~.
\end{align*}
Further denote by
$$
\Zn = \sigma(Z_{j,h}\; \vert \; 1 \leq j \leq n, 1\leq h \leq d)
$$ 
the $\sigma$-field generated by the sample $(Z_{j,h})_{1 \leq j \leq n, 1\leq h \leq d}$ 
and by $\{\xi_{\ell}\}_{\ell \in \mathbb{N}}$ a sequence of i.i.d. standard Gaussian random variables, which is independent of $\Zn$. 
Now we consider a multiplier version of the CUSUM-process from the $L$ blocks, that is
\begin{align}
\label{eq:BSCUSUM}
\mathbb{U}_{n,h}^{(L)}(s)= \frac{1}{n}\sum_{\ell=1}^{L}\xi_{\ell}\widehat{V}_{\ell,h}(\floor{ns})
-\frac{\floor{ns}}{n^2}\sum_{\ell=1}^{L}\xi_{\ell}\widehat{V}_{\ell,h}(n)~,
\end{align}
and introduce the bootstrap integral statistics (for each component)
\begin{align}\label{eq:BSIntegral}
\begin{split}
B_{n,h} &=  \frac{6\sqrt{n}}{\widehat{s}_h\tau(\hatth)(\hatth(1-\hatth))^2} \int_0^1 \mathbb{U}_{n,h}^{(L)}(s)k(s,\hatth)ds \cdot
{I}\big \lbrace \vert \hatdmuh \vert > n^{-1/4} \big \rbrace \\
&+ {I}\big \lbrace \vert \hatdmuh \vert \leq n^{-1/4} \big \rbrace \cdot b_d~, 
\end{split}
\end{align}
where $k(x,y)=x\wedge y - xy$ denotes the covariance kernel of the standard Brownian bridge and
$$
\hat{s}_h^2 = \frac{1}{L}\sum_{\ell=1}^{L}\xi_{\ell}^2\cdot \hatsigmah
$$
is the variance estimate from the bootstrap sample.
In an analogous manner to the previous sections, we define a normalized maximum of the bootstrap statistics $B_{n,h}$  by
\begin{align} \label{bootstat}
\BSTS = a_d\Big ( \maxhd B_{n,h} - b_d\Big )~,
\end{align}
where the normalizing sequences $a_d$ and $b_d$ are given by \eqref{ad} and \eqref{bd}, respectively.

\begin{remark}
Let us give a brief heuristic explanation, why \eqref{eq:BSIntegral} and \eqref{bootstat} define an appropriate bootstrap statistic.
Our basic aim is to mimic the distribution of the test statistics $\TS$ on the ``$0$-dimensional boundary'' of the null hypothesis $\HNull$, i.e. in case of $\absdmuh = \Delta_h$ for all $h=1,\dots,d$.
Note, that we have $\dfrac{\mu_h(s,t_h)}{\Delta_h} = \sign (\dmuh) k(s,t_h)$ and it was outlined in Section \ref{sec3}, that in this setting the representation 
\begin{align*}
\Tnh &= 
\frac{3\sqrt{n}}{(t_h (1 - t_h))^2\tau_h\sigma_h\absdmuh} \int_0^1 (\mathbb{U}_{n,h}(s) - \mu_h(s,t_h))^2 ds \\
&\;\;\;+ \frac{6\sqrt{n}}{(t_h (1 - t_h))^2\tau_h\sigma_h\absdmuh} \int_0^1 \mu_h(s,t_h)(\mathbb{U}_{n,h}(s) - \mu_h(s,t_h))ds
\end{align*}
holds, where by Lemma \ref{lem:tnh1} the first summand on the right-hand side is of order $o_\Prob(1)$.
The component-wise bootstrap statistic $\B_{n,h}$ is supposed to imitate the second summand in this decomposition.
However, this approach is only sensible for all components $h$, that actually contain a change and for this reason we introduce the indicator function $I\big \lbrace \vert \hatdmuh \vert > n^{-1/4} \big \rbrace$.
To be more precise, we will show in the appendix that
\begin{align*}
\BSTS = a_d\Big ( \maxhd B_{n,h} - b_d\Big ) \approx a_d \Big ( \max_{h \in \Sd^c} B_{n,h} - b_d\Big )
\end{align*}
as $d,n \to \infty$, where the set $\Sd$ is defined in $\eqref{sd}$.
The statistic $\BSTS$ will then be used to generate bootstrap quantiles for the statistic $\TS $.
In order to prove that this is a valid approach we require the following additional assumptions.
\end{remark}

\begin{assump}[assumptions for the bootstrap]\label{assump:bootstrap}
For the constants $p,D$ in Assumption \ref{a2} and the exponent $\ell$ in \eqref{eq:LK} assume that
\begin{enumerate}[(B1)]
\item $D< \min\{(1-\ell)(p/2-2),\ell(p/4-1)\}$ with $\ell>3/4$ and $p>8$~,
\item $\lim_{n \to \infty} \frac{\log n}{ K (\mu^{\ast}_d)^2}=0$~,
\end{enumerate}
where $\mu^{\ast}_d $ is defined in \eqref{mustar}.
\end{assump}

\noindent
Assumption (B1) is a rather technical condition relating the dimension $d \sim n^D$, the number of blocks $L=n^{\ell}$ and the constant $p$, which was initially introduced in Assumption \ref{a1}.
Assumption (B2) is only a restriction for the monotone decreasing sequence $\mu^{\ast}_d = \min_{h\in \Sd^c}\absdmuh$, that is not allowed to decrease arbitrarily fast.

We are now ready to state the main results of this section.
Our first lemma shows, that we are able to identify the set of stable components $\Sd$ correctly. 

\begin{lemma}\label{lem:BSstableset}
If Assumptions \ref{a1} - \ref{a4} and Assumption \ref{assump:bootstrap} are satisfied, then 
\begin{align*}
a_d \Big ( \max_{h \in \Sd} B_{n,h} - b_d \Big ) \convp 0~,
\end{align*}
where the set $\Sd$ is the set of all components with no change point defined in \eqref{sd} and the sequences $a_d$ and $b_d$ are defined in \eqref{ad} and \eqref{bd}, respectively.
\end{lemma}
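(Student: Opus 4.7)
My plan is to exploit the indicator structure in the definition \eqref{eq:BSIntegral} of $B_{n,h}$ so as to reduce the lemma to a uniform concentration estimate for $\hatdmuh$ on the stable index set $\Sd$. On the event
\[
\mathcal{E}_n := \Big\{\max_{h \in \Sd} \vert \hatdmuh \vert \leq n^{-1/4}\Big\},
\]
the indicator $I\{\vert \hatdmuh \vert > n^{-1/4}\}$ vanishes for every $h \in \Sd$, while $I\{\vert \hatdmuh \vert \leq n^{-1/4}\}$ equals one, so that \eqref{eq:BSIntegral} collapses to $B_{n,h} = b_d$. Thus $a_d\bigl(\max_{h \in \Sd} B_{n,h} - b_d\bigr)\cdot I(\mathcal{E}_n) = 0$, and it remains to prove $\Prob(\mathcal{E}_n^c) \to 0$.

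For each $h \in \Sd$ the series $\{Z_{j,h}\}_{j \in \Z}$ is stationary, so $Z_{j,h} = \mu_h + X_{j,h}$ with a common mean $\mu_h$ that cancels, and
\[
\hatdmuh = \bar{Z}_h^{-} - \bar{Z}_h^{+}
\]
is a difference of sample averages of the centered stationary sequence $\{X_{j,h}\}$ on two disjoint intervals. The definitions \eqref{eq:blocklimits} together with Assumption (C1) guarantee $K\widehat{L}_h^{-}$ of order $\hatth\, n$ and $K(L-\widehat{L}_h^{+})$ of order $(1-\hatth)n$, with $\hatth$ confined to $(\bt,1-\bt)$, so both denominators are of order $n$ irrespective of the randomness of $\hatth$. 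Consequently $\vert \hatdmuh \vert$ is dominated by a constant multiple of
\[
M_{n,h}\ :=\ \sup_{s \in [\bt,\,1-\bt]} \frac{1}{s(1-s)} \bigg\vert \frac{1}{n}\sum_{j=1}^{\floor{ns}} X_{j,h} - \frac{\floor{ns}}{n^2}\sum_{j=1}^{n} X_{j,h} \bigg\vert,
\]
that is, a weighted CUSUM-supremum of the centered stationary process over a deterministic compact subinterval of $(0,1)$.

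The physical dependence condition (T1) combined with the sub-exponential moment condition (M1) delivers an exponential Nagaev-type tail bound for $M_{n,h}$; this is exactly the ingredient already used to justify Lemmas \ref{lem:hatth} and \ref{lem:hatvar} following \cite{jirak:2015}. Applying this bound together with a union bound over $\vert \Sd \vert \leq d = C_1 n^D$ yields
\[
\max_{h \in \Sd} \vert \hatdmuh \vert \ =\ O_{\Prob}\Big(\sqrt{\log d/n}\Big) \ =\ O_{\Prob}\big(n^{-1/2}\sqrt{\log n}\big)\ =\ o_{\Prob}\big(n^{-1/4}\big),
\]
so that $\Prob(\mathcal{E}_n^c) \to 0$, and the lemma follows.

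The main obstacle I anticipate is the rigorous combination of the exponential tail inequality for the weighted CUSUM-supremum with the union bound over the polynomially-growing dimension: the weights $1/\{s(1-s)\}$ have to be controlled uniformly on $[\bt,1-\bt]$, and the physical dependence coefficients $\vartheta_{j,h,p}$ must be handled uniformly in $h$. All of this is standard given the geometric decay imposed by (T1) and parallels the maximal inequalities invoked in Sections \ref{sec3} and \ref{sec2s}; no new probabilistic ingredient beyond those already cited is required.
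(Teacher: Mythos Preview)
Your reduction is exactly the paper's: on the event $\mathcal{E}_n=\{\max_{h\in\Sd}|\hatdmuh|\le n^{-1/4}\}$ the indicator structure in \eqref{eq:BSIntegral} forces $B_{n,h}=b_d$ for every $h\in\Sd$, so the statistic vanishes identically and it only remains to show $\Prob(\mathcal{E}_n^c)\to 0$. The paper obtains this from Lemma \ref{lem:basicbootstrap}(i), whose proof is essentially the partial-sum/union-bound argument you sketch.

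One technical wrinkle in your write-up: the claim that $|\hatdmuh|$ is dominated by a constant multiple of the weighted CUSUM supremum $M_{n,h}$ is not quite correct as stated. Because of the gap between $K\widehat L_h^-$ and $K\widehat L_h^+$ (the blocks straddling $\hatth$ are discarded), $\hatdmuh=\bar Z_h^--\bar Z_h^+$ is \emph{not} a rescaled CUSUM at a single point $s$; that identification only holds when the two averaging intervals are contiguous. The clean fix, which is what the paper actually does in the proof of Lemma \ref{lem:basicbootstrap}(i), is to bound each average separately by the partial-sum maximum: using $K\widehat L_h^-\gtrsim n$ and $K(L-\widehat L_h^+)\gtrsim n$ (from (C1) and the block definitions) one gets
\[
|\hatdmuh|\ \lesssim\ \frac{1}{n}\max_{1\le k\le n}\Big|\sum_{j=1}^k X_{j,h}\Big|,
\]
and then the Fuk--Nagaev-type inequality (valid under (T1) for $p>4$) plus a union bound over $d\lesssim n^D$ with $D<p/2-2$ gives $\max_{h\in\Sd}|\hatdmuh|=o_{\Prob}(n^{-C})$ for any $C<1/2$, in particular $o_{\Prob}(n^{-1/4})$. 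With this correction your argument matches the paper's.
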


\begin{theorem}\label{thm:BSunstableset}
If  Assumptions \ref{a1} - \ref{a4} and Assumption \ref{assump:bootstrap} are satisfied, then
\begin{align*}
a_d \Big ( \max_{h \in \Sd^c} B_{n,h} - b_d \Big )
\convd
\left\{
\begin{array}{rl}
  G &\text{if}\;\; \limd | \Sd^c |/d  = 1,\vspace{0.2cm} \\
  G + \log c &\text{if}\;\; \limd | \Sd^c |/d  = c \text{ for } c \in (0,1)~,\vspace{0.2cm} \\
  -\infty &\text{if}\;\; \limd | \Sd^c |/d  = 0 \\
\end{array} 
\right.
\end{align*}
conditional on $\Zn$ in probability, where the sequences $a_d$ and $b_d$ are defined in \eqref{ad} and \eqref{bd}, respectively.
\end{theorem}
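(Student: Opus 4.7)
The plan is to exploit the fact that, conditional on $\Zn$, the multiplier CUSUM process $\mathbb{U}^{(L)}_{n,h}$ defined in \eqref{eq:BSCUSUM} is linear in the i.i.d.\ standard Gaussian multipliers $\xi_1,\ldots ,\xi_L$, so the integral $\int_0^1 \mathbb{U}_{n,h}^{(L)}(s) k(s,\hatth)\,ds$ is centered Gaussian given $\Zn$. Consequently $B_{n,h}$ is, up to the $\Zn$-measurable scaling and the self-normalization $\hat s_h$ (which concentrates to $\hatsigmah$ by the law of large numbers applied to $\frac{1}{L}\sum_\ell \xi_\ell^2$), a centered Gaussian variable. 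The strategy is therefore the same template that produced Theorem \ref{cor:tnhgumbel}: first approximate $(B_{n,h})_{h \in \Sd^c}$, conditionally on $\Zn$, by a centered Gaussian vector $N^{\ast}=(N_h^{\ast})_{h\in \Sd^c}$ with an asymptotically deterministic covariance structure, then invoke a Berman-type extreme value theorem for this Gaussian vector. Passing from unconditional to ``conditional on $\Zn$ in probability'' convergence is routine since the limiting distribution is non-random.

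Writing $W_h := \int_0^1 \mathbb{U}_{n,h}^{(L)}(s) k(s,\hatth)\,ds = \frac{1}{n}\sum_{\ell=1}^{L}\xi_\ell c_{\ell,h}$ with $\Zn$-measurable coefficients
\[
c_{\ell,h} = \int_0^1 \Big(\widehat V_{\ell,h}(\lfloor ns \rfloor) - \tfrac{\lfloor ns \rfloor}{n}\widehat V_{\ell,h}(n) \Big) k(s,\hatth)\,ds,
\]
the conditional covariance of $W_h$ and $W_i$ is $n^{-2}\sum_\ell c_{\ell,h} c_{\ell,i}$. A block-estimator computation, analogous to the long-run variance construction in Section \ref{sec2s}, shows that after the normalization in \eqref{eq:BSIntegral} the conditional variance of $B_{n,h}$ tends to $1$ and the conditional covariance between $B_{n,h}$ and $B_{n,i}$ tends to the long-run correlation $\rho_{h,i}$, uniformly in $h,i \in \Sd^c$ and in probability with respect to $\Zn$. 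The necessary pieces are the uniform consistency of the change-point estimators $\hatth$ from Lemma \ref{lem:hatth} (which controls the replacement of $\mathcal{D}_{h,j}$ by $\widehat{\mathcal{D}}_{h,j}$), the uniform consistency of $\hatsigmah$ from Lemma \ref{lem:hatvar}, and Assumption \ref{assump:bootstrap}, which in particular ensures that the indicator $I\{|\hatdmuh|>n^{-1/4}\}$ equals $1$ on an event of probability tending to one for every $h \in \Sd^c$ (via concentration of $\hatdmuh$ around $\dmuh$) and that the blocks between $\Lhminus$ and $\Lhplus$ that are zeroed out in \eqref{eq:transform} contribute negligibly. The long-run correlations still satisfy (S3)--(S4), so after replacing $(B_{n,h})_{h\in\Sd^c}$ by the Gaussian surrogate $(N_h^{\ast})_{h\in\Sd^c}$ the classical Berman argument, exactly as in Theorem \ref{cor:tnhgumbel} and already packaged for subsets in Corollary \ref{cor:adjustedgumbel}, yields the trichotomy $G$ / $G+2\log c$ / $-\infty$ according to the asymptotic proportion $|\Sd^c|/d$.

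The hard part will be the conditional moment step: one must bound, uniformly over the polynomially many components $h \in \Sd^c$, the deviation of the conditional variance of $B_{n,h}$ from $1$ and of the conditional covariances from $\rho_{h,i}$. This requires combining the polynomial rates from Lemmas \ref{lem:hatth} and \ref{lem:hatvar} with the block bandwidth $K \sim n^{1-\ell}$ coming from \eqref{eq:LK}, exploiting Assumption (T1) to truncate off-diagonal block contributions, and carefully accounting for (a) the $O(1)$ blocks between $\Lhminus$ and $\Lhplus$ that are dropped in \eqref{eq:transform}, and (b) the substitution of $\dmuh$ by $\hatdmuh$ in the centerings $\bar Z_h^\pm$ of \eqref{eq:separatemeans}. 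Once these uniform bounds are in place, the Gaussian approximation and the Berman argument produce the stated limit as a direct analogue of Corollary \ref{cor:adjustedgumbel} evaluated at $\mathcal{M}_d = \Sd^c$.
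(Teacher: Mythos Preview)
Your proposal is correct and follows essentially the same route as the paper: exploit that, conditional on $\Zn$, the bootstrap statistics are linear in the Gaussian multipliers $\xi_\ell$ and hence exactly Gaussian; then use a Gaussian comparison (Lemma~3.1 of \cite{chernozhukov:2013}) to pass from the random conditional covariance to a deterministic target $\Sigma$, and finally apply the Berman extreme-value argument together with the subset scaling of Corollary~\ref{cor:adjustedgumbel}. The paper implements this by introducing a chain of intermediate statistics $\widetilde B_{n,h}^{\diamond}\to \widetilde B_{n,h}^{\ast}\to B_{n,h}^{\ast}\to B_{n,h}^{(I)}\to B_{n,h}$, replacing one estimated ingredient ($t_h$ in the kernel, then the block CUSUM, then $\sigma_h$ and $t_h$ in the normalization, then the indicator) at a time, whereas you propose a single uniform covariance estimate for $B_{n,h}$ directly; both lead to the same place.

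One small correction: the limiting conditional covariance is not $\rho_{h,i}$ itself but $\Sigma_{h,i}=\rho_{h,i}\,\widetilde\tau(t_h,t_i)/(\tau(t_h)\tau(t_i))$ with $\widetilde\tau$ as in \eqref{eq:widetildetau}; only the bound $|\Sigma_{h,i}|\le|\rho_{h,i}|$ (via Cauchy--Schwarz) is needed for Berman, so your argument still goes through.
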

\noindent
Finally, the representation
\begin{align}
\BSTS =  \max\left\{ a_d \left( \max_{h \in \Sd}  B_{n,h} - b_d \right), a_d \left( \max_{h \in \Sd^c} B_{n,h} -b_d \right) \right\}~, 
\end{align}
Lemma \ref{lem:BSstableset} and Theorem \ref{thm:BSunstableset} directly yield the following main result of this section.

\begin{theorem}\label{thm:BSgumbel}
If Assumptions \ref{a1} - \ref{a4} and Assumption \ref{assump:bootstrap} are satisfied, then  
\begin{align*}
\BSTS
\convd
\left\{
\begin{array}{rl}
  G &\text{if}\;\; \limd | \Sd^c |/d  = 1~, \vspace{0.2cm} \\	
  \max \{ G + \log c, 0\}  & \text{if}\;\; \limd | \Sd^c |/d  = c \text{ for } c \in (0,1)~,\vspace{0.2cm} \\
  0 &\text{if}\;\;\limd | \Sd^c |/d  = 0  \\
\end{array} 
\right.
\end{align*}
conditional on $\Zn$ in probability.
\end{theorem}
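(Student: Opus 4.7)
The proof is essentially a direct reduction to the two preceding results. The starting point is the decomposition
$$
\BSTS = \max \bigl\{ M_n^{(1)} , M_n^{(2)} \bigr\}
$$
where $M_n^{(1)} := a_d(\max_{h \in \Sd} B_{n,h} - b_d)$ and $M_n^{(2)} := a_d(\max_{h \in \Sd^c} B_{n,h} - b_d)$, already stated just above the theorem. Lemma \ref{lem:BSstableset} yields $M_n^{(1)} \convp 0$ conditional on $\Zn$ in probability, while Theorem \ref{thm:BSunstableset} supplies the three-regime conditional weak limit of $M_n^{(2)}$ (namely $G$, $G + 2 \log c$, or $-\infty$, according to the behaviour of $|\Sd^c|/d$).

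The plan is then to combine these two statements via the continuous mapping theorem applied to $(x,y) \mapsto \max(x,y)$. Because the conditional limit of $M_n^{(1)}$ is a deterministic constant, a Slutsky-type argument carried out conditionally on $\Zn$ delivers joint conditional convergence of $(M_n^{(1)}, M_n^{(2)})$ to $(0, M_\infty)$, where $M_\infty \in \{G,\, G + 2 \log c,\, -\infty\}$ is the relevant limit from Theorem \ref{thm:BSunstableset}. Continuity of the maximum then yields $\BSTS \convd \max\{0, M_\infty\}$, again conditional on $\Zn$ in probability. Reading off the three regimes gives the assertion: when $\limd |\Sd^c|/d = c \in (0,1)$ the limit is $\max\{0,\,G + 2\log c\}$; when $\limd |\Sd^c|/d = 0$ the limit is $\max\{0,-\infty\} = 0$; and when $\limd |\Sd^c|/d = 1$ the limit is $\max\{0,G\}$, which agrees with a standard Gumbel variable $G$ on the half-line $(0,\infty)$ containing the critical value $\gua$ for $\alpha \in (0, 1 - e^{-1}]$ used by the bootstrap test.

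The most delicate step in this plan is the transfer of conditional convergence in probability through the non-linear map $\max$: the classical continuous mapping theorem is phrased for weak or for unconditional stochastic convergence, so I would invoke the standard subsequence principle. Along any subsequence of sample sizes I would extract a further subsequence on which the conditional convergences of $M_n^{(1)}$ and of the conditional law of $M_n^{(2)}$ both hold almost surely with respect to the data, apply the continuous mapping theorem pointwise on the full-measure event, and then recombine via the characterisation of convergence in probability by subsequences. No new moment bounds, Gaussian approximations or long-run variance estimates are needed beyond those already established in Lemma \ref{lem:BSstableset} and Theorem \ref{thm:BSunstableset}.
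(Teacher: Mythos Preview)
Your proposal is correct and follows exactly the same route as the paper, whose proof consists of nothing more than invoking the decomposition $\BSTS = \max\{M_n^{(1)},M_n^{(2)}\}$ together with Lemma~\ref{lem:BSstableset} and Theorem~\ref{thm:BSunstableset}. Your added care with the subsequence principle for transferring conditional convergence through the maximum, and your remark that the first case strictly yields $\max\{0,G\}$ (agreeing with $G$ on the half-line $(0,\infty)$ relevant for the test), are in fact more precise than what the paper spells out.
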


\begin{remark}\label{rembias}
In the following let $g_{1-\alpha}^* $ denote the $(1-\alpha)$-quantile of the distribution of the bootstrap statistic $\BSTS$ and
define the bootstrap test by rejecting the null hypothesis \eqref{hnull} in favour of  \eqref{halt}, whenever 
\begin{align}\label{eq:BSrejectionrule}
\TS > g_{1-\alpha}^*~,
\end{align} 
where the statistics $\TS $ is defined in \eqref{eq:TS}.
If the alternative hypothesis of at least one relevant change point holds, Theorem \ref{thm:consistent} shows $\TS \convp \infty$, which due to Theorem \ref{thm:BSgumbel} directly yields consistence of the bootstrap test. 
Under the null hypothesis, we consider different cases.
Recall the definition of the sets
\begin{align*}
\Bd &= \{ h \in \setd \;  | \;  \absdmuh = \Delta_h \}~,\\
\Sd &= \{ h \in \setd \;  | \;  \absdmuh = 0 \}~,
\end{align*}
where we always have $\Bd \subset S_d^c \subset \setd$.
A combination of Theorem \ref{thm:level} and Theorem \ref{thm:BSgumbel} now shows, that the rule in \eqref{eq:BSrejectionrule} gives an asymptotic level $\alpha$ test, whenever the limit $\limd | \Sd^c |/d$ exists.
\end{remark}

\section{Finite sample properties}
\label{sec6} 
\def\theequation{6.\arabic{equation}}
\setcounter{equation}{0}

In this section we examine the finite sample properties of the asymptotic and bootstrap test by means of a small
simulation study and  illustrate its application in an example.

\subsection{Simulation study}
The results of the previous section demonstrate that a test which rejects the null hypothesis in \eqref{relevant} for large values of the statistic $\TS $ defined in \eqref{eq:TS} is consistent and has asymptotic level $\alpha$, provided that the critical values are either chosen by the asymptotic theory or estimated by the bootstrap procedure introduced in Section \ref{sec5}.
It turns out that a bias correction, which does not change the asymptotic properties, yields substantial improvements of the finite
sample properties of the asymptotic and bootstrap test.
To be precise, recall the decomposition in \eqref{eq:tnhexpansion}, that is
\begin{align*}
\Tnh = T_{n,h}^{(1)} + T_{n,h}^{(2)}~,
\end{align*}
where $T_{n,h}^{(1)}$ and $ T_{n,h}^{(2)}$ are defined in  \eqref{tn1} and  \eqref{tn2}, respectively. 
It is shown in Section \ref{sec3}, that the maximum of the terms  $T_{n,h}^{(1)}$ is of order $o_{\mathbb{P}}(1)$, while the maximum of the terms
$T_{n,h}^{(2)}$ (appropriately standardized) converges weakly to a Gumbel distribution.
However, $T_{n,h}^{(1)}$ is always nonnegative, which may lead to a non negligible bias
in applications, in particular when the sample size is small relative to the dimension. 
\\
To solve this problem for the asymptotic test \eqref{asymtest}, note that we have
\begin{align*}
\mean{T_{n,h}^{(1)}  } 
&= \frac{3\sigma_h }{\sqrt{n}(t_h (1 - t_h))^2\tau_h  \absdmuh  }\int_0^1k(s,s)ds(1+o(1))\\ 
&= \frac{\sigma_h (1+o(1))}{2\sqrt{n}(t_h (1 - t_h))^2\tau_h  \absdmuh  }
\end{align*}
and therefore we subtract the term
\begin{align*}
\frac{\hatsigmah}{2\sqrt{n}(\hatth (1 - \hatth))^2\tauhatth  \Delta_h  }
\end{align*}
from each statistic $\hatTnhdelta$ defined in  \eqref{eq:tnhdelta}.
Similarly, a bias correction is also suggested for the bootstrap test in Section \ref{sec5}.
Recall that the Bootstrap statistic is already constructed to mimic the distribution of the statistic  $T_{n,h}^{(2)}$.
Consequently we use 
\begin{align*}
\dfrac{3\sqrt{n}}{\widehat{s}_h \tauhatth(\hatth(1-\hatth))^2\Delta_h}\int_0^1\left(\mathbb{U}_{n,h}^{(L)}(s)\right)^2 ds
\end{align*}
to mimic the distribution of $T_{n,h}^{(1)}$ and we add it (for each $h$) to the statistic $B_{n,h}$ defined in \eqref{eq:BSIntegral}, while the statistics $\hatTnhdelta$ in \eqref{eq:tnhdelta} are left unchanged. 

To guarantee a stable long-run variance estimation, we replaced the standard long-run variance estimator used in the theory of Section \ref{sec2s}, by an estimator using the Bartlett kernel [see \cite{newey1987}], that is (for component $h$)
$$\hat{\phi}_{h}(0) +  \sum_{1\leq \vert j \vert \leq \beta_n} k\left( \dfrac{j}{\beta_n} \right)\hat{\phi}_{h}(j)\;\; \text{with}\;\;
k(x)=\left\{
\begin{array}{rl}
	1 - \vert x \vert &\text{if}\;\;\vert x \vert \leq 1 \\
  0 &\text{if}\;\; \vert x \vert > 1 ~,\\
\end{array} 
\right. $$
where the bandwidth is chosen as $\beta_n = n^B$ and $B$ is the constant included in Assumption (S2).
In our examples we have fixed the bandwidth to $\beta_n=8$.
The only exception here are the simulations contained in Table \ref{table1} for the stronger dependent model (III) defined below, for which we use a bandwidth of $\beta_n=60$ in the case $n=2000$ and $\beta_n=80$ in the cases $n=5000$ or $n=10000$. 
In order to have a more conservative test we use the maximum of the two variance estimates based on the sets $\widehat{\mathcal{D}}_{h,1}$ and $\widehat{\mathcal{D}}_{h,2}$ defined in \eqref{dh12} as the final variance estimation.

We will focus on two scenarios with independent innovations in model \eqref{eq:model}, i.e.
\begin{itemize}
\item[]  (I) $X_{j,h} \sim\mathcal{N}(0,1)$ i.i.d. 
\item[] (II) $X_{j,h} \sim \Exp(1)$ i.i.d.
\end{itemize}
and on two models of dependent data, an ARMA-process and a MA-process, defined by 
\begin{itemize}
\item[] (III) $X_{j,h} = 0.2 X_{j-1,h} -0.3X_{j-2,h} -0.4Y_{j,h} + 0.8 Y_{j-1,h}$~,\\
\item[]  where ~~ $Y_{k,h}=\varepsilon_{k}  + \sum_{i=1}^{19}i^{-3} \varepsilon_{k-i,h}$
and $\varepsilon_{k,h} \sim \mathcal{N}(0,1)$ i.i.d.\\
\item[]  (IV) $X_{j,h} = \varepsilon_{j,h} + \frac{1}{10}\sum_{k=1}^{29} k^{-3}\varepsilon_{j-k,h}$, where  $ \varepsilon_{k,h} \sim \mathcal{N}(0,1)$ i.i.d.
\end{itemize}
All innovations are constructed such that  Var$(X_{j,h}) \approx 1$.
Throughout this section we assume that different components are independent and we are interested in testing the relevant hypotheses
\begin{align} \label{h0ex}
& \HNull: \absdmuh \leq 1  \;\text{for all}\; h \in \setd \\
\text{versus }  ~~
&  
\HA: \absdmuh > 1 \;\text{for at least one }\; h \in \setd~,
\end{align}
that is $\Delta_h =1 $ for all $ h\in \setd$.
Power and level of the tests are simulated for $d$-dimensional vectors with means 
$$\mathbb{E} [Z_{j,h}] =  \begin{cases} 0 & \mbox{ if }  j \leq \lfloor nt_h \rfloor \\ 
\mu & \mbox{ if }~  j > \lfloor nt_h \rfloor 
\end{cases}  ~~, ~h=1,\ldots , d,
$$   
where different values of the parameter $\mu$ are considered and the time of change is $t_h = 1/2$.
All results presented in this section are based on $1000$ simulation runs and the used significance level is always $\alpha = 0.05$.
Further, the constant $S$ involved in \eqref{dh12} was fixed to $0.9$.\\
In our first example we investigate the finite sample properties of the asymptotic test \eqref{asymtest} which uses the quantiles of the Gumbel distribution.
In Table \ref{table1} we display the simulated type I error of this test at the ``$0$-dimensional boundary'' of the null hypothesis (that is $\Delta_h=\dmuh=1$ for all $h \in \setd$) for different values of $n$ and $d$. 
It is well known that the approximation of the distribution of the maximum of normally distributed random variables by a Gumbel distribution is not very accurate for small samples and therefore we consider relatively large sample sizes and dimensions in order to illustrate the properties of the asymptotic test.
The results reflect the asymptotic properties in Section \ref{sec3}.
For the independent cases (I) and (II) the approximation of the nominal level is more precise as for the dependent scenarios (III) and (IV), where the test is more conservative.
We also mention that the rejection probabilities are increasing with $ \dmuh $ as predicted in Section \ref{sec2} and \ref{sec4} (these results are not presented for the sake of brevity).
\begin{table}[h]
    \centering
    \begin{tabular}{ccccccc}
        \hline 
        \vspace{2mm} 
        & \multicolumn{2}{c}{\textit{n=2000}}  & \multicolumn{2}{c}{\textit{n=5000}} & \multicolumn{2}{c}{\textit{n=10000}} \\
        \textit{model} & \multicolumn{1}{c}{\textit{d=500}} &
\multicolumn{1}{c}{\textit{d=1000}} & \textit{d=500} &
\multicolumn{1}{c}{\textit{d=1000}} & \multicolumn{1}{c}{\textit{d=500}}
& \textit{d=1000} \\
        \hline  
        (I) 	& 8.8\% 	& 11.1 \% &  6\% 	& 7.5\%	& 6.4\%  & 7.4\% \\
        (II) 	& 5.4\% 	& 5.9  \% &  5\% 	& 5.5\% 	& 4\% 	& 5\% \\
        (III) 	& 4.9\%     & 4.5  \% &  1.9\% 	& 2.5\% 	& 2.5\% 	& 4\% \\
        (IV) 	& 9.7\%     & 13 \% &  6.4\% 	& 5.2\% 	& 6.2\% 	& 5.9\% \\
        \hline
    \end{tabular}
    \smallskip 
\caption{\it Empirical rejection probabilities of the asymptotic test \eqref{asymtest} under a specific point at the ``$0$-dimensional boundary'' of null hypothesis, that is  $\Delta_h = \dmuh = 1$ for all $h \in \setd$. \label{table1}}
\end{table}

\begin{table}[h]
\centering
\begin{tabular}{ccccccc}
\hline 
$\vert \Delta \mu_h \vert$ & & 0.9 & 0.95 & 1.0 & 1.05 & 1.1 \\ 
model & $K$ &  & & & & \\
\hline 
(I) & 1 & 2.2\% & 4.2\% & 8.2\%  & 14.7\% & 26.9\% \\ 
(I) & 4 & 5.2\% & 9.1\% & 17.3\%  & 29.6\% & 47.8\% \\ 
\hline 
(II) & 1 & 0.6\% & 0.9\% & 2.5\%  & 7.8\% & 17.3\% \\  
(II) & 4 & 0.9\% & 3.2\% & 9.2\%  & 20.2\% & 36.6\% \\ 
\hline 
(III) & 5 & 0\% & 0\% & 0.6\%  & 3.2\% & 13.7\% \\ 
(III) & 10 & 0.1\% & 1.3\% & 4.6\%  & 21.6\% & 59.7\% \\ 
\hline 
(IV) & 2 & 3.3\% & 5.9\% & 10.9\%  & 21.5\% & 35.8\% \\ 
(IV) & 4 & 5\% & 10.3\% & 18.4\%  & 32.5\% & 48.7\% \\ 
\hline 
\end{tabular}
\smallskip

\caption{\it Empirical rejection probabilities of the bootstrap test  \eqref{eq:BSrejectionrule} for $n = d = 100$  
at specific points in the alternative and null hypothesis
($\Delta_h  = 1 $  for all $h \in \setd$). Different block length $K$ in the multiplier bootstrap are considered.
\label{table2}}
\end{table}

Next we analyse the properties of the bootstrap test \eqref{eq:BSrejectionrule}, which was developed in Section \ref{sec5}. Here we focus on relatively small sample sizes, that is $n=100$, $n=200$ and a moderate  dimension compared to the sample sizes, that is $d=50$, $d=100$.
It is well known that the multiplier bootstrap is sensitive with respect to the choice of the block length and this dependence is also observed for the bootstrap test proposed here.
Exemplarily we show in Table \ref{table2} the simulated rejection probabilities
for the different models (I) - (IV), different values of $K$ and $\dmuh=\mu$. Here the values $ \absdmuh \leq 1$ correspond to the null hypothesis.
For $ \absdmuh=1$ (for all $h \in \setd$) the results of Section \ref{sec5} predict that at this point the level of the test should be close to $\alpha=0.05$.
Note that for the case of independent innovations (model (I) and (II)) the choice $K=1$ (which means that no blocks are used) leads to the most reasonable results, given by rejection probabilities on the ``$0$-dimensional boundary'' ${\cal A}_d$ of the null hypothesis of $8.2\%$ and $2.5\%$, respectively, while larger values of $K$ such as $K=4$ yield a too large type I error. 
On the other hand in the dependent models (III) and (IV), the block length needs to be carefully adapted to the time series structure.
For model (III) $K=10$ seems to be optimal, while for model (IV) choosing $K=2$ gives acceptable results.
The larger block length for model (III) might be required due to its autoregressive structure.
Moreover, inspecting the results in rows 6 and 8 of Table \ref{table2} shows that too small values of $K$ lead to a loss of power, while - similar to the first two models - too large values can cause an uncontrolled type I error.
\begin{figure}[t]
\begin{center}
\includegraphics[width=6cm]{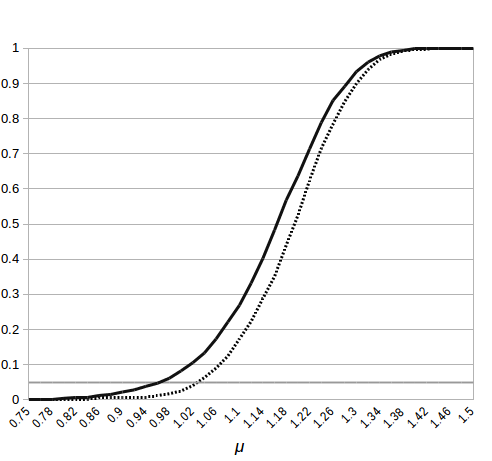}
\includegraphics[width=6cm]{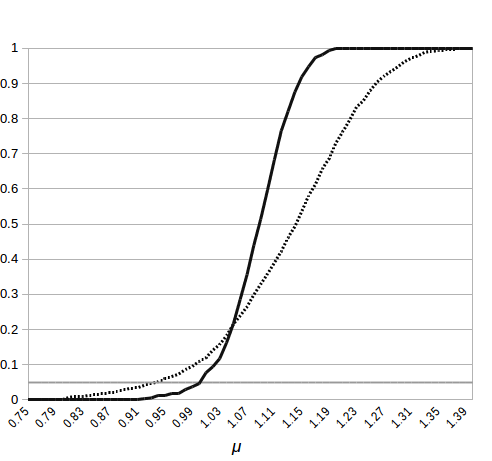}
\caption{ \it Simulated power of the Bootstrap test \eqref{eq:BSrejectionrule} for the hypothesis \eqref{h0ex}.
The sample size is  $n=100$ and the dimension $d=100$. 
All differences are given by $\dmuh = \mu $, where the choice $\mu =1$ corresponds to a point of the ``$0$-dimensional boundary''  ${\cal A}_d$ of the hull hypothesis.
Left panel: Solid line: Model (I), dashed line: Model (II).
Right panel: Solid line Model (III), dashed line: Model (IV).
\label{powerplots}} 
\end{center}
\end{figure}
Next we display in Figure \ref{powerplots} the simulated power of the bootstrap test for all four models under consideration (where we use the optimal $K$ from Table \ref{table2}). 
Note that the rejection probabilities are increasing with $ \dmuh $ as  predicted by the asymptotic arguments of the previous sections. 
In the left panel we show the results for the independent scenarios (I) and (II), which are rather similar.
On the other hand an inspection of the right panel shows larger difference in the dependent case. 
The different dependency structures in model (III) and (IV) yield substantial differences in power of the bootstrap test \eqref{eq:BSrejectionrule}. 
We conclude the discussion of the bootstrap test investigating the sample size $n=200$.
The corresponding results are presented in Table \ref{table4} for the dimensions $d=50$ and $d=100$.
\begin{table}[h]
\centering
\begin{tabular}{ccccccc }
$\vert \Delta \mu_h \vert$ & & 0.9 & 0.95 & 1.0 & 1.05 & 1.1 \\
model & $K$ &  & & & & \\
\hline
 $d=50$ \\
(I)   &  1 & 0.5\% & 2.3\% & 5.1\% &  14.7\% & 34.6\% \\
(II)  &  1 & 0.3\% &   1\% & 4.1\% &    11\% & 30.4\% \\
(III) & 20 &   0\% & 0.6\% & 7.6\% &  46.6\% & 94.8\% \\ 
(IV)  &  2 &   0\% &   0\% & 9.5\% & 47.56\% &  100\% \\  
\hline
$d=100$\\
(I) & 1 &  0.9\% & 2.3\% & 6.9\%  & 18\% & 40.7\% \\
(II) & 1 &  0\% & 1.2\% & 3.7\%  & 10.6\% & 26.6\%  \\
(III) & 20 & 0\% & 0.3\% & 6.4\%  & 51\% & 95.3\% \\
(IV) & 2 & 0\% & 0\% & 13.1\%  & 100\% & 100\%  \\
\end{tabular}
\caption{\it Empirical rejection probabilities of the bootstrap test \eqref{eq:BSrejectionrule} at specific points in the alternative and null hypothesis ($\Delta_h  = 1 $ for all $h \in \setd$).
The level is $5\%$, and the sample size is $n=200$.\label{table4}}
\end{table}

We also consider an example, where the dimension is substantially larger than the sample size.
In Table \ref{table5} we display the simulated size of the bootstrap test \eqref{eq:BSrejectionrule} at the ``$0$-dimensional boundary'' of null hypothesis, that is  $\Delta_h = \dmuh = 1$ for all $h \in \setd$.
We consider again the models (I) - (IV),  the sample size is $n=100$ and the dimension is given by $d=500$, $1000$ and $10000$.
We observe a reasonable approximation of the nominal level in models (I) - (III), while the nominal level is overestimated in model (IV).
\begin{table}[h]
\centering
\begin{tabular}{ccccc}
model  & $K$ & $d=500$ & $d=1000$ & $d=10000$  \\
\hline
(I)   &  1 & 5.7\% &  5.8\% & 5.8\%\\
(II)  &  1 & 2.4\% &  3.0\% & 2.7\%\\
(III) & 10 & 4.4\% &  4.4\% & 4.1\%\\
(IV)  &  2 & 9.4\% & 12.4\% & 13.5\%
\end{tabular}
\caption{\it Type I error of the Bootstrap test \eqref{eq:BSrejectionrule} at ``$0$-dimensional boundary'' of null hypothesis, that is $\Delta_h = \dmuh = 1$ for all $h \in \setd$.  
The level is $5\%$, the  sample size is $n=100$ and the dimension is given by $d=500$, $1000$ and $10000$ .\label{table5}}
\end{table}

We conclude with a small study of the robustness aspects of the new procedure, which is motivated by a comment of a referee on an earlier version of this paper.
Note that the asymptotic test \eqref{asymtest} is constructed for the case, where there is at most one change point in each component of the series.
In the final example of this section we will therefore investigate the performance of the test in the situation, where there
are multiple small changes in the components of the series accumulating to a magnitude larger than the given thresholds $\Delta_h$.
To be precise we have conducted the asymptotic test \eqref{asymtest} at level $\alpha=5\%$ for the hypotheses \eqref{h0ex} (with  $\Delta_h=1$ for all $h =1 , \ldots d$).
The empirical rejection probabilities are displayed in Figure \ref{figrobust} for the model (I).
The left panel of the figure corresponds the  case  $n=500$, $d=500$, where there are  $10$  ``small'' changes 
(of the same size) in each component of the mean function at equidistant locations summing up to a total change amount of $\Delta\mu_h$.
The right panel of the figure shows corresponding results for the case $n=100$, $d=100$ and $5$ changes (of the same size) in each component at equidistant locations.
We observe that the test is still able to detect deviations in the mean function if the total amount of change is sufficiently large.
\begin{figure}[t]
\begin{center}
\includegraphics[width=6cm]{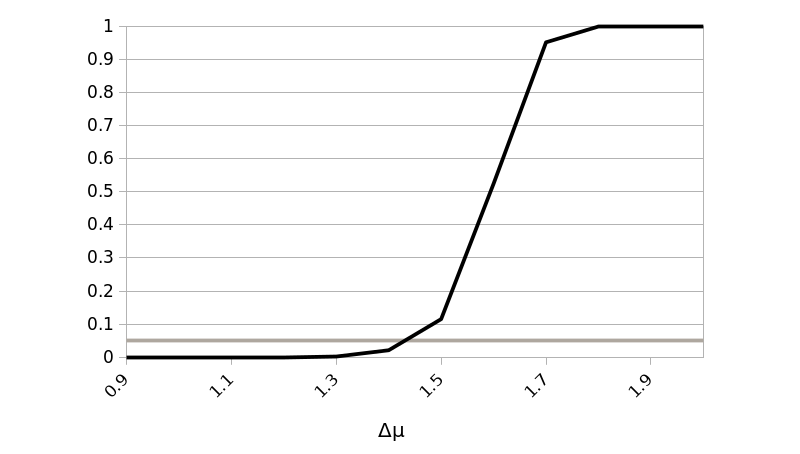}
\includegraphics[width=6cm]{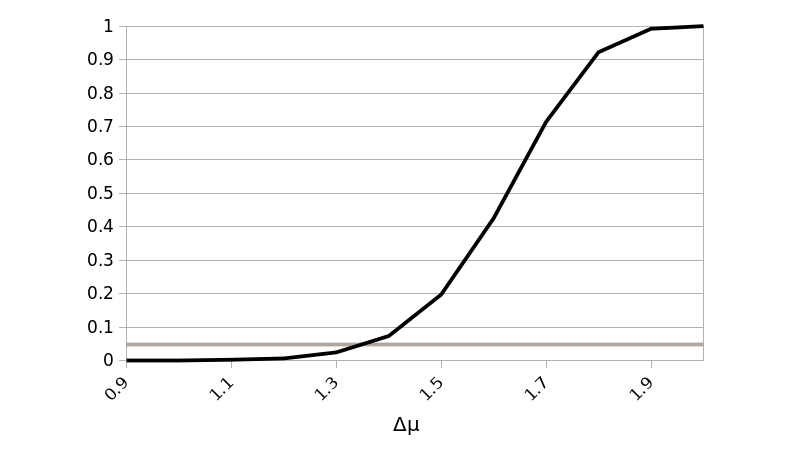}
\caption{ \it  
Simulated power of the asymptotic test \eqref{asymtest} for the hypothesis \eqref{h0ex} in a situation with multiple change points.
Left panel: sample size $n=100$, dimension $d=100$ and $10$ ``small'' changes in each component, where the  
total amount of change is equal to $\Delta\mu_h$. Right panel: $n=500$, $d=500$ and $5$ change points.} 
\label{figrobust}
\end{center}
\end{figure}

\subsection{Data example}\label{secrealdata}
In this section we illustrate in a short example, how the new test can be used in applications. Our dataset is taken from hydrology and consists of average daily flows ($m^3/$sec) of the river Chemnitz at Goeritzhain (Germany) in the period 1909-2014.
This data set has been recently analysed by  \cite{sharipov:2016} using a statistical model from functional data analysis.
Following these authors we subdivide the data into $n=105$ years with  $d={365}$ days per year.
To avoid confusion, the reader should note that the German hydrological year starts on the $1$st of November.

\begin{figure}[p]
\includegraphics[width=13cm,height=4.5cm]{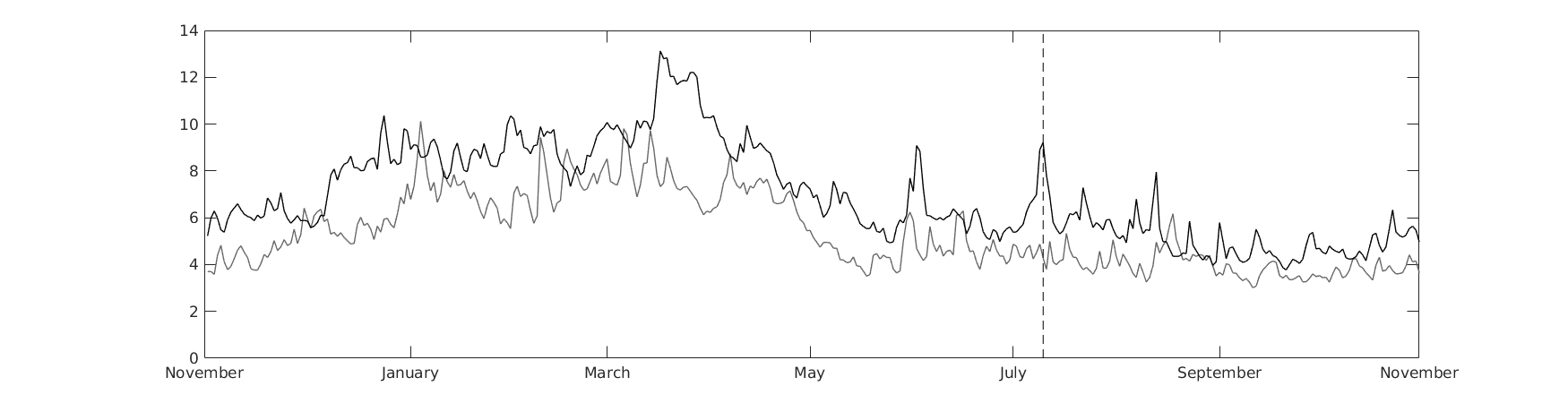}\\
\includegraphics[width=13cm,height=4.5cm]{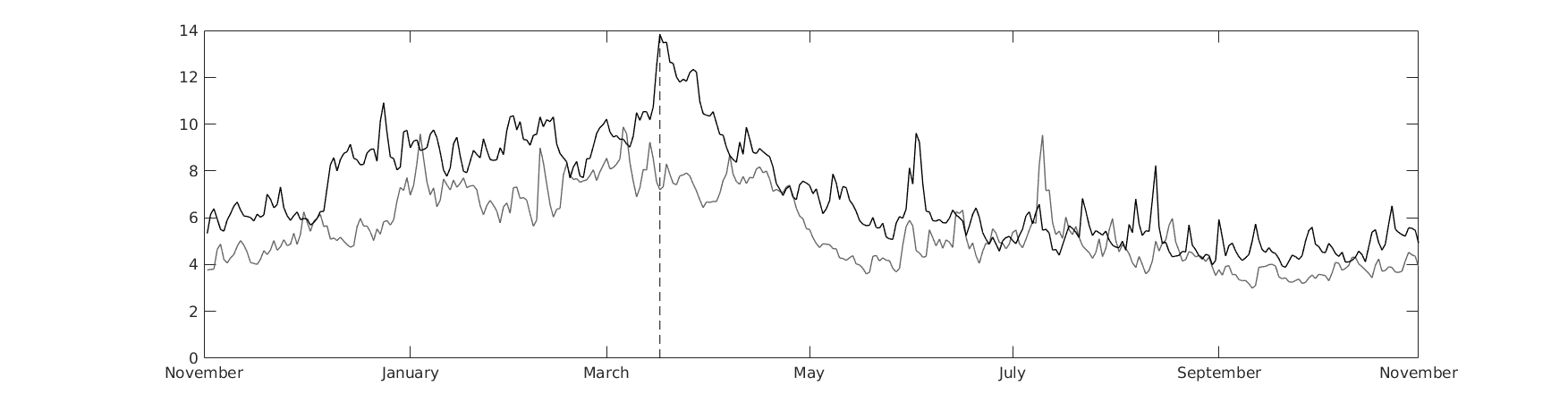} \\ 
\includegraphics[width=13cm,height=4.5cm]{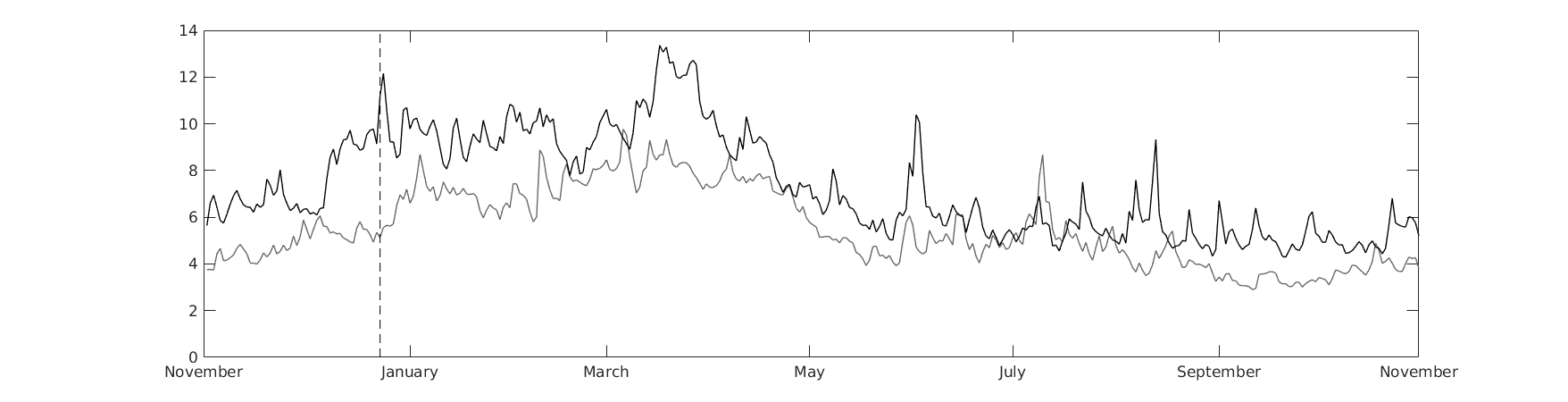}\\
\includegraphics[width=13cm,height=4.5cm]{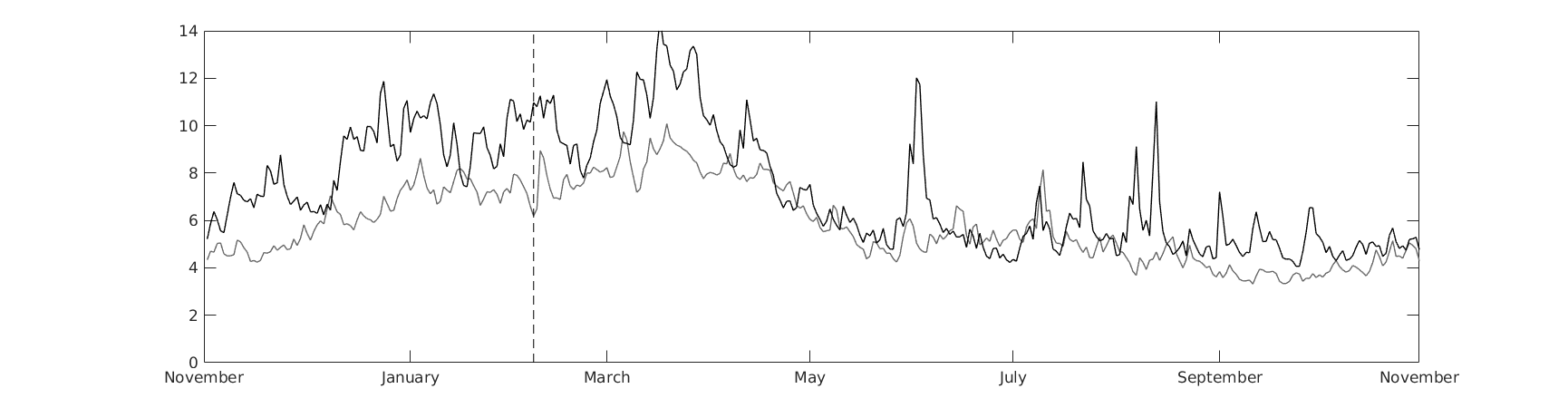}
\caption{\it Average annual flow curves for the river Chemnitz at Goeritzhain for the periods before  (gray) and  after (black) the estimated year of change.
The four figures correspond to different days of the year, where a change point has been localised: 
$10$th of July (first row), $18$ of March(second row) $23$th of December (third row)
and $7$th of February (fourth row).
\label{averageflow}} 
\end{figure}

Equipped with our new methodology, we are now able to test if there is a relevant change in the mean of at least one component. To specify the term 'relevant', we exemplarily set the thresholds for all components to
\begin{align*}
\Delta_1=\Delta_2=\dots=\Delta_{365}=0.63,
\end{align*}
which is close to $10\%$ of the overall mean of the data under consideration.
For a significance level of $5\%$ the Bootstrap test defined in Section \ref{sec5} rejects the null hypothesis of no relevant change for the given thresholds. 
Moreover, we can also identify the components, where the individual test statistic leads to a rejection, that is $a_d(\hatTnhdelta - b_d) > g^*_{1-\alpha} $,
where $g^*_{1-\alpha} $ is the $({1-\alpha} )$ quantile of the bootstrap distribution used in \eqref{eq:BSrejectionrule}.
For the data under  consideration we found four components with a relevant change, given by $h=53$, $ 99$, $137$  and $ 252 $ with corresponding estimators  
$n\hat{t}_{53}=56$, $n\hat{t}_{99}=70$,
$n\hat{t}_{137}=47$ and $n\hat{t}_{252}=41$, respectively. This corresponds to the $23$th of December $1965$,
the $7$th of February $1979$, the $18$th of March $1956 $ and to the $10$th of July $1950$, respectively.
In  Figure \ref{averageflow} we display for these cases the time series before and after the year. For example the panel in the first row shows the average annual flow curves before and after $1950$ and the other three years are interpreted separately.
In all cases we observe a large difference between both curves close the estimated component (marked by the vertical dashed line).
We finally note that the approach of \cite{sharipov:2016} identifies only one change point, namely the year $1965$.
In contrast our analysis indicates that there might be additional change points in the years $1950$, $1956$, $1965$ and $1979$ corresponding to different parts of the hydrological year. 

\section*{Acknowledgments}
This work has been supported in part by the Collaborative Research Center ``Statistical modeling of nonlinear
dynamic processes'' (SFB 823, Teilprojekt  A1, C1) and the Research Training Group 'High-dimensional phenomena in probability - fluctuations and discontinuity' (RTG 2131) of the German Research Foundation (DFG).
The authors would like to thank Martina Stein, who typed parts of this paper with considerable technical expertise and Moritz Jirak for extremely helpful discussions. Moreover we are grateful to Andreas Schumann and Svenja Fischer from the Institute of Hydrology in Bochum, who provided hydrological data for Section \ref{secrealdata}.
Finally, we would like to thank the referees for their constructive comments on an earlier version of the paper.

\setlength{\bibsep}{3pt}

\appendix
\newpage 

\noindent Throughout the proofs we will use that Assumption (C1) directly implies the existence of two constants $\tau_-, \tau_+$, such that the function $\tau$ defined in \eqref{tauh} satisfies
\begin{align*}
0< \tau_- \leq \tau(t_h) \leq \tau_+ < \infty
\end{align*}
holds for all $h \in \mathbb{N}$.

\section{Proofs of the results in Sections 2 and 3}\label{proofsec23}
\subsection{Proof of assertion (\ref{motiv})} \label{sec71}
Straightforward calculations yield
\begin{align}\label{cusumsparam}
\begin{split}
\mean{ \cusums } &= \mu_h(s,t_h) + O(n^{-1})\dmuh\\
n\Var{ \cusums } &= \sigma_h^2(s - s^2) + o(1),
\end{split}
\end{align}
uniformly with respect to $s \in [0,1]$, where $\mu_h(s,t_h)$ is defined in \eqref{mu}.
An application of Fubini's theorem now gives
\begin{align*}
\mathbb{E}  \Big [ {\int_0^1 \cusumssq ds} \Big] 
&= \int_0^1 \mean{\cusumssq} ds\\
&= \int_0^1  \big \{  \mean{( \cusums - \mu_h(s,t_h) )^2}\\  
&\;\;\;+ 2 \mu_h(s,t_h)  \mean{ \cusums  } -  \mu_h^2(s,t_h) \big\} ds + o(1)\\
&= \dfrac{\sigma_h^2}{6n} + \int_0^1 \mu_h^2(s,t_h) ds + o(1)
= \dmuh \dfrac{(t_h(1-t_h))^2}{3} + o(1).
\end{align*}

\subsection{Details in the proof of Theorem  \ref{cor:tnhgumbel}}

\subsubsection{Proof of Lemma \ref{lem:tnh1}} \label{sec711}
Observing the definition \eqref{tauh} and Assumptions (T2) and (C1) it easily follows that there exist constants $c$ and $C$, such that the inequalities 
\begin{align} \label{lem:auxiliary1}
c < (t_h(1-t_h))^2\sigma_h < C
\end{align}
hold for all $h \in \setd$. With these inequalities  we obtain
\begin{align*}
0
&\leq a_d \maxhd T_{n,h}^{(1)}\\
&=a_d \maxhd \frac{3\sqrt{n}}{(t_h (1 - t_h))^2\tau_h\sigma_h\absdmuh} \int_0^1 (\mathbb{U}_{n,h}(s) - \mu_h(s,t_h))^2 ds\\
&\lesssim a_d \maxhd \sqrt{n}\int_0^1 \left( \cusums - \mu_h(s,t_h) \right)^2 ds\\
&\leq \frac{a_d}{\sqrt{n}} \maxhd \sup_{s\in [0,1]} (\sqrt{n}|\cusums - \mu_h(s,t_h)|)^2.
\end{align*}
Using \eqref{cusumsparam} and $\maxhd \vert \dmuh \vert \leq C_u$ this is further bounded by
\begin{align*}
2\frac{a_d}{\sqrt{n}} \Big(\maxhd \sup_{s\in [0,1]}\sqrt{n}|\cusums - \mean{\cusums}| \Big)^2 + o(1)
\end{align*}
Introducing the notation $e_d=2\sqrt{2\log 2d}$ it follows that the last term can be bounded by the random variable
\begin{align*}
4\frac{a_d}{\sqrt{n}} \Big(\maxhd \sup_{s\in [0,1]}\sqrt{n}|\cusums - \mean{\cusums}| - \frac{e_d}{4}\Big)^2 + o(1)
\end{align*}
and the claim follows from Theorem 2.5 in \citep{jirak:2015} noting that $\{ \cusums - \mean{\cusums} \}_{s\in [0,1]}$ corresponds to a CUSUM-process under the classical null hypothesis of no change point, that is $\mu_h^{(1)} = \mu_h^{(2)}$ for all $h \in \setd$ (note that there is a typo in the original paper, which has been corrected in the arXiv version).

\subsubsection{Proof of Lemma \ref{lem:gaussappro}}
\label{sec712}

A straightforward calculation yields
\begin{align*}
n\Cov(\cusum{s_1},\mathbb{U}_{n,i}(s_2))=k(s_1,s_2)\gamma_{h,i} + r_{n,h,i}(s_1,s_2),
\end{align*}
where $k(s_1,s_2) = s_1 \wedge s_2 -s_1s_2$ denotes the covariance kernel of a Brownian bridge and the remainder term satisfies
\begin{align*}
\limn \sqrt{n} \sup_{s_1,s_2\in [0,1]} \sup_{h,i \in \mathbb{N}} \vert r_{n,h,i}(s_1,s_2) \vert = 0.
\end{align*}
An application of Fubini's theorem shows that 
\begin{align}\label{eq:covtnh2}
\Cov \left( \bar{T}_{n,h}^{(2)}, \bar{T}_{n,i}^{(2)} \right) = \frac{\sign(\dmuh)\sign(\Delta\mu_i)\gamma_{h,i}\widetilde{\tau}(t_h,t_i)}{\sigma_h\sigma_i\tau(t_h)\tau(t_i)} + r_{n,h,i},
\end{align}
where the function $\widetilde{\tau}$ is given by 
\begin{align}\label{eq:widetildetau}
\widetilde{\tau}(t,t')=\frac{36}{(t(1-t)t'(1-t'))^2}
\int_0^1\int_0^1k(s_1,t)k(s_2,t')k(s_1,s_2)ds_1ds_2
\end{align}
and the remainder term $r_{n,h,i}$ satisfies 
\begin{align*}
\limn \sqrt{n}\sup_{h,i \in \mathbb{N}} \vert r_{n,h,i} \vert = 0.
\end{align*}
Moreover, for the function $\tau$ defined in \eqref{tauh} we obtain the representation  
\begin{align*}
\tau(t)= \frac{6}{(t(1-t))^2} \sqrt{\int_0^1\int_0^1k(s_1,t)k(s_2,t)k(s_1,s_2)ds_1ds_2},
\end{align*}
and it follows that $\tau(t_h)\tau(t_h)=\widetilde{\tau}(t_h,t_h)$.
Therefore we obtain as a special case the estimate
\begin{align}\label{eq:vartnh2}
\Var{ \bar{T}_{n,h}^{(2)} } = 1 + r_{n,h,h}~.
\end{align}
Furthermore the representation 
\begin{align*}
\cusums -\mean{\cusums}
= \dfrac{1}{n}\sum_{j=1}^{\floor{ns}}X_{j,h} - \dfrac{\floor{ns}}{n^2}\sum_{j=1}^n X_{j,h}
=: \mathbb{U}_{n,h}'(s)
\end{align*}
yields
\begin{align*}
\bar{T}_{n,h}^{(2)} &= \frac{6\sqrt{n}}{(t_h (1 - t_h))^2\tau_h\sigma_h\absdmuh} \int_0^1 \mu_h(s,t_h) \mathbb{U}_{n,h}'(s) ds~.
\end{align*} 
and the proof can now be performed in two steps:
\begin{enumerate}[{{Step }1{:}}]
\item For two constants $c,C>0$ it holds that
\begin{align}\label{eq:gaussapprostep1}
\sup_{x \in \mathbb{R}}\bigg\vert\Pb{\maxhd \bar{T}_{n,h}^{(2)} \leq x } - \Pb{\maxhd \widetilde{N}_h \leq x }\bigg\vert \leq Cn^{-c}~,
\end{align}
where $\widetilde{N}$ is a $d$-dimensional centered Gaussian distributed random variable with same covariance structure as $(\bar{T}_{n,1}^{(2)},\dots,\bar{T}_{n,d}^{(2)})^T$.
\item There exist two constants $c,C>0$ such that
\begin{align}
\sup_{x \in \mathbb{R}}\bigg\vert\Pb{\maxhd \widetilde{N}_h \leq x } - \Pb{\maxhd N_h  \leq x }\bigg\vert \leq Cn^{-c}~,
\end{align}
where $N$ is a centered $d$-dimensional Gaussian random variable with covariance matrix $\Sigma =(\Sigma_{h,i})_{i,j=1,\ldots, d}$ satisfying
\begin{align} \label{claim} 
\left\vert \Sigma_{h,i} \right\vert \leq \vert \rho_{h,i} \vert
\end{align}
for all $h,i \in \setd$.
\end{enumerate}
\textbf{Step 1:} At the end of this proof we derive the following representation
\begin{align}\label{eq:cnhj}
\bar{T}_{n,h}^{(2)} = \frac{1}{\sqrt{n}}\sum_{j=1}^n c_{n,j,h}X_{j,h}~,
\end{align}
where the coefficients $c_{n,j,h}$ are uniformly bounded, that is 
\begin{align*}
\sup_{n,j,h \in \mathbb{N}} \vert c_{n,j,h} \vert \leq c_0 < \infty
\end{align*}
Next we apply the Gaussian approximation in Corollary 2.2 of \cite{zhang:2016} to the random variables $Y_{n,j,h} = c_{n,j,h}X_{j,h}$. 
For this purpose we check the assumptions of this result.
By Assumption (M1) we obtain a sequence $(M_d')_{d\in \N} $ by $M_d'= c_0 \cdot M_d$, which still satisfies
\begin{align*}
\max_{j=1}^n \maxhd \mean{ \exp (\vert Y_{n,j,h} \vert /M_d')}
\leq \max_{j=1}^n \maxhd \mean{ \exp (\vert X_{j,h} \vert /M_d)}
\leq C_0~.
\end{align*}
Moreover, Assumption (M2) yields that $M_d' \lesssim n^{m}$ with $m<3/8$.
This means that for sufficiently small $b$ we have $m < (3-17b)/8$ and by Assumption (S1) it follows that $d \lesssim \exp(n^b)$.
Using the identity \eqref{eq:Xphysical} the triangular array $ \{ Y_{n,j,h} \, \vert \, 1 \leq j \leq n, 1 \leq h \leq d \}_{n \in \mathbb{N}}$ exhibits the following structure
\begin{align*}
Y_{n,j,h}
= c_{n,j,h}\cdot g_h(\varepsilon_j,\varepsilon_{j-1},\dots)
:= \tilde{g}_{n,j,h}(\varepsilon_j,\varepsilon_{j-1},\dots).
\end{align*}
Define the coefficients
\begin{align*}
\vartheta^{Y}_{n,j,h,p} = \max_{i=1}^n\|\tilde{g}_{n,i,h}(\varepsilon_i,\dots) - \tilde{g}_{n,i,h}(\varepsilon_i,,\dots,\varepsilon_{i-j+1},\varepsilon_{i-j}',\varepsilon_{i-j-1},\dots)  \|_p,
\end{align*}
where $\varepsilon_{i-j}'$ is an independent copy of $\varepsilon_{i-j}$, and observe the inequality 
\begin{align*}
\sum_{j=u}^{\infty} \maxhd \vartheta_{n,j,h,p}^Y
\leq \sum_{j=u}^{\infty} \maxhd \sup_{n \in \mathbb{N}} \max_{i=1}^n \vert c_{n,i,h} \vert \vartheta_{j,h,p}
\lesssim c_0 \sum_{j=u}^{\infty} \delta^j \lesssim \delta^u,
\end{align*}
which holds uniformly with respect to  $n$.
By \eqref{eq:vartnh2} there exist constants $c_1$ and $c_2$, such that
\begin{align*}
0<c_1 <  \minhd \Var{ \bar{T}_{n,h}^{(2)}} \leq  \maxhd \Var{ \bar{T}_{n,h}^{(2)}} <  c_2
\end{align*}
if $n$ is sufficiently large.
Since all requirements are met, Corollary 2.2 of \cite{zhang:2016} implies the existence of a Gaussian random variable $\widetilde{N}$ having the same covariance matrix as the vector $(\bar{T}_{n,1}^{(2)},\dots,\bar{T}_{n,d}^{(2)})^T$ and satisfying inequality \eqref{eq:gaussapprostep1}.
\\\\
\textbf{Step 2:}
We choose the random variable $N$ to be $d$-dimensional centered Gaussian with covariance matrix given by
\begin{align*}
\Sigma_{h,i}=\frac{\sign(\dmuh)\sign(\Delta\mu_i)\gamma_{h,i}\widetilde{\tau}(t_h,t_i)}{\sigma_h\sigma_i\tau(t_h)\tau(t_i)}=\sign(\dmuh)\sign(\Delta\mu_i)\rho_{h,i}\frac{\widetilde{\tau}(t_h,t_i)}{\tau(t_h)\tau(t_i)},
\end{align*}
where the function $\widetilde{\tau}$ is defined in equation \eqref{eq:widetildetau}.
Next denote with $\widetilde{\Sigma}$ the covariance matrix of the vector $\widetilde{N}$ from Step 1.
By \eqref{eq:covtnh2} we have
\begin{align*}
\theta_d &
:= \max_{h,i=1}^d \vert \Sigma_{h,i} - \widetilde{\Sigma}_{h,i}\vert \\
&=  \max_{h,i=1}^d \left\vert  \frac{\sign(\dmuh)\sign(\Delta\mu_i)\gamma_{h,i}\widetilde{\tau}(t_h,t_i)}{\sigma_h\sigma_i\tau(t_h)\tau(t_i)}-\Cov(T_{n,h}^{(2)},T_{n,i}^{(2)})\right\vert \lesssim n^{-1/2}
\end{align*}
and an application of the Gaussian comparison inequality, in Lemma 3.1 of \cite{chernozhukov:2013} gives
\begin{align*}
\sup_{x \in \mathbb{R}}\Big \vert \Prob \Big ( { \maxhd N_h \leq x} \Big) - \Prob \Big( { \maxhd \widetilde{N}_h \leq x}  \Big) \Big \vert
\lesssim \theta_d^{1/3} \max\{1, \log(d/\theta_d)\}^{2/3}\lesssim n^{-C}.
\end{align*}
The proof of Step 2 is now completed observing the bound $ | \widetilde{\tau}(t_h,t_i)  |  \leq  | \tau(t_i) | |\tau(t_h)|$, which is a consequence of the (generalised) Cauchy-Schwarz inequality.

\medskip

\noindent
{\bf Proof of the representation (\ref{eq:cnhj}).} Recall the definition $k(s,t)=  s \wedge t  - st$, then 
\begin{align*}
\int_0^1 k(s,t_h) \cusums &ds
= \frac{1}{n}\sum_{i=0}^{n-1} \int_{i/n}^{(i+1)/n} k(s,t_h) \Big (\sum_{j=1}^{\floor{ns}} X_{j,h} - \frac{\floor{ns}}{n}  \sum_{j=1}^{n}X_{j,h} \Big ) ds \\
&= \frac{1}{n}\sum_{i=0}^{n-1} \int_{i/n}^{(i+1)/n} \sum_{j=1}^n k(s,t_h)\left(I\{j\leq i\} - i/n \right) X_{j,h} ds \\
&= \frac{1}{n}\sum_{j=1}^n\left( \sum_{i=0}^{n-1} \int_{i/n}^{(i+1)/n}k(s,t_h)\left(I\{j\leq i\} - i/n \right)ds \right)X_{j,h}.
\end{align*}
Now observe the representation for $\bar{T}_{n,h}^{(2)}$ in \eqref{tn2bar}, where\\ $\mu_h(s,t) =  2 \Delta \mu_h  k(s,t)$.
Define 
\begin{align*}
c_{n,j,h} := \frac{6\dmuh}{(t_h(1-t_h))^2\tau(t_h)\sigma_h\absdmuh}\sum_{i=0}^{n-1} \int_{i/n}^{(i+1)/n}k(s,t)\left(I\{j\leq i\} - i/n \right)ds~, 
\end{align*}
then the representation \eqref{eq:cnhj} holds, and the proof is completed observing the inequalities
\begin{align*}
|c_{n,j,h}| \leq  \frac{6}{\tau_{-}\sigma_-\bt^4} \sum_{i=0}^{n-1}\int_{i/n}^{(i+1)/n} 2 ds
\leq c_0 := \frac{12}{\tau_{-}\sigma_-\bt^4}~.
\end{align*}

\subsubsection{Proof of Lemma \ref{lem:hatth}}
For the proof of Lemma \ref{lem:hatth} we will require Theorem 3.1 from \cite{jirak:2015}, which we state here for the sake of readability and completeness.
Note that our assumptions imply those, used in the reference.
\begin{theorem}\label{thm:jirakhatth}
If Assumptions \ref{a1} - \ref{a4} hold, then
\begin{align*}
\Pb{\maxhSdc |\hatth - t_h| \geq x} \lesssim |\Sd^c| (x n \log n )^{-p/2+2}~,
\end{align*}
provided that 
$$
x \geq C_a \dfrac{\log(n)}{(\mu^{\star}_d)^2n},
$$
where $C_a>0$ denotes a constant only depending on $\bt$ and the sequence $\big\{\sup_{h \in \mathbb{N}}\vartheta_{j,h,p}\big\}_{j \in \mathbb{N}}$~.
\end{theorem}
\noindent Now we can proceed to the actual proof of Lemma \ref{lem:hatth}.\\\\
Let us start with \textit{(i)}: Define $\tilde{C}:= \min\big\{\tfrac{-D}{p/2-2}+1,~C\big\}$.  
Now fix $\varepsilon>0$ and observe that for $n$ sufficiently large, we obtain by Assumption \eqref{eq:limsupmustar}
\begin{align*}
n^{-\tilde{C}}\varepsilon \geq C_a \dfrac{\log(n)}{(\mu^{\star}_d)^2n}~,
\end{align*}
where $C_a$ is the constant involved in Theorem \ref{thm:jirakhatth}.
Thus an application of Theorem \ref{thm:jirakhatth} gives
\begin{align*}
\Pb{ \maxhSdc |\hatth - t_h| \geq \varepsilon \cdot n^{-\tilde{C}} }
&\lesssim |\Sd^c| \Big(n^{-\tilde{C}}n \log(n)\Big)^{-p/2+2}\\
&\leq |\Sd^c|n^{-D} \Big(\log(n)\Big)^{-p/2+2} = o(1)~.
\end{align*}
To prove \textit{(ii)}, one can repeat the proof of \textit{(i)} with $\tilde{C}=\tfrac{-D}{p/2-2}+1$~.

\subsubsection{Proof of Lemma \ref{lem:hatvar}}
For a proof of this result, we have to introduce some additional notation.
For the random variables $\{\varepsilon_j\}_{j \in \mathbb{Z}}$, that drive the physical system defined in Section \ref{sec3}, let
\begin{align*}
\F_j = \sigma(\varepsilon_j,\varepsilon_{j-1},\dots)
\end{align*}
define the canonical filtration.
Further define projections by
\begin{align*}
\Pro_j( \cdot) = \E{\;\cdot\; | \F_j } - \E{ \;\cdot\; | \F_{j-1} }~.
\end{align*}
Since the sequence $\{\varepsilon_j\}_{j \in \mathbb{Z}}$ is i.i.d. it follows  that $\F_{-\infty}  = \cap_{j \in \mathbb{Z}} \F_{j}$ is $\mathbb{P}$-trivial.
So the backwards martingale convergence theorem yields for the centered random variables $X_{j,h}$ defined in \eqref{eq:model} that 
\begin{align*}
X_{n,h} = \sum_{j=0}^{\infty}  \Pro_{n-j}X_{n,h}~.
\end{align*}
Note, that Jensen's inequality implies that $\| \Pro_j(X_0) \|_p \leq \vartheta_{j,h,p}$, which we will frequently apply in the sequel.
In a first step, we will consider estimators of the autocovariances based on the non-observable, centered random variables $X_j$.
For this purpose introduce
\begin{align}\label{eq:autocovarstar}
\hat{\phi}^*_h(i,k) = \dfrac{1}{k-i} \sum_{j=i+1}^{k} \big(X_{j,h} - \bar{X}_h(k)\big)\big(X_{j-i,h} - \bar{X}_h(k)\big)
\end{align}
as lag $i$ autocovariance estimator in (spatial) component $h$ based on the sample $X_{1,h},X_{2,h},\dots,X_{k,h}$ and let $\bar{X}_h(k)$ denote its sample mean.
We get the following uniform consistency result.
\begin{lemma}\label{lem:uniformautocov}
Grant Assumptions \ref{a1} - \ref{a3} and let $c \in (0,1)$ be a fixed constant.
It holds that
\begin{align*}
\sqrt{n} \max_{i=1}^{\beta_n}\maxhd \max_{k=c\cdot n}^n \Vert \hat{\phi}_h(i,k) - \phi_h(i)) \Vert_{p/2}
= \mathcal{O}(1)~. 
\end{align*}
\end{lemma}
\begin{proof}
Throughout the proof assume that $n$ is sufficiently large, such that
\begin{align*}
k-i \geq c \cdot n - \beta_n = c \cdot n - n^B > 0~.   
\end{align*}
We will use the upper bound
\begin{align}\label{eq:ufac1}
\begin{split}
\|\hat{\phi}_h(i,k) - &\phi_h(i)\|_{p/2}  \\
&\leq \Big\Vert \dfrac{1}{k-i}\sum_{j=i+1}^{k}X_{j,h}X_{j-i,h} - \mean{X_{i,h}X_{0,h}}\Big\Vert_{p/2}
+ \Big\Vert (\bar{X_h}(k))^2 \Big\Vert_{p/2} \\
&\qquad+ \Big\Vert \dfrac{\bar{X_h}(k)}{k-i}\sum_{j=i+1}^{k}X_{j-i,h} \Big\Vert_{p/2}
+ \Big\Vert \dfrac{\bar{X_h}(k)}{k-i}\sum_{j=i+1}^{k}X_{j,h}  \Big\Vert_{p/2} ~.
\end{split}
\end{align}
Let us treat the first summand of the right-hand side at first.
We can proceed similar to the proof of Theorem 1 in \citet{wu:2007} and assume without loss of generality that $\mean{X_{i,h}X_{0,h}}=0$.
Due to the discussion above, that leads to
\begin{align}\label{eq:ufac2}
S_{k,h} := \sum_{\ell=i+1}^{k} X_{\ell,h}X_{\ell-i,h} = \sum_{\ell=i+1}^{k} \sum_{j=0}^{\infty} \Pro_{\ell-j}X_{\ell,h}X_{\ell-i,h} =  \sum_{j=0}^{\infty}\sum_{\ell=i+1}^{k} \Pro_{\ell-j}X_{\ell,h}X_{\ell-i,h}~.
\end{align}
By Burkholder and triangle inequality we can bound the inner sum
\begin{align}\label{eq:ufac3}
\Big\Vert \sum_{\ell=i+1}^{k} \Pro_{\ell-j}X_{\ell,h}X_{\ell-i,h} \Big\Vert_{p/2}
\leq \sqrt{n} \cdot C_p \Vert \Pro_{0}X_{j,h}X_{j-i,h} \Vert_{p/2}~.
\end{align}
Applying the ideas of the proof of Lemma 1 in \cite{wu:2009}, we have
\begin{align}\label{eq:ufac4}
\Vert \Pro_{0}X_{j,h}X_{j-i,h} \Vert_{p/2}
\lesssim \delta^{j-i}I_{\{j-i\geq 0\}}+\delta^jI_{\{j\geq 0\}}
\end{align}
uniformly with respect to  $h$, where we also used Assumption (T1).
Combining the representation derived in \eqref{eq:ufac2} with \eqref{eq:ufac3} and \eqref{eq:ufac4}, we obtain
\begin{align*}
\| S_{k,h} \|_{p/2} 
\leq \sum_{j=0}^{\infty} \Big\Vert \sum_{\ell=i+1}^{k} \Pro_{\ell-j}X_{\ell,h}X_{\ell-i,h} \Big\Vert_{p/2}
\leq \sqrt{n} C_p \sum_{j=0}^{\infty}  \| \Pro_{0}X_{j,h}X_{j-i,h} \|_{p/2}\\
\leq \sqrt{n} C_p \sum_{j=0}^{\infty} (\delta^{j-i}I_{\{j-i\geq 0\}}+\delta^jI_{\{j\geq 0\}})
\lesssim \sqrt{n}
\end{align*} 
uniformly with respect to $h$. The treatment of the first summand of \eqref{eq:ufac1} is now  finished by using the bound
\begin{align*}
\max_{i=1}^{\beta_n}\maxhd \max_{k=c\cdot n}^n \dfrac{\sqrt{n}}{k - i } \big\Vert S_{k,h} \big\Vert_{p/2}
&\leq \maxhd\max_{k=c\cdot n}^n \dfrac{\sqrt{n}}{cn - n^B } \big\Vert S_{k,h} \big\Vert_{p/2}\\
&\lesssim \dfrac{n}{cn -n^B}=\mathcal{O}(1)~.
\end{align*}
It remains to consider the last three summands of \eqref{eq:ufac1}.
Since the arguments are similar we will only consider  the second one.
Again we have the representation
\begin{align*}
\bar{X_h}(k)
= \dfrac{1}{k}\sum_{\ell=1}^{k} \sum_{j=0}^{\infty}\Pro_{\ell-j}X_{\ell,h}
= \dfrac{1}{k} \sum_{j=0}^{\infty}\sum_{\ell=1}^{k}\Pro_{\ell-j}X_{\ell,h}
\end{align*}
and can apply Burkholder inequality
\begin{align*}
\Big\Vert \sum_{\ell=1}^{k}\Pro_{\ell-j}X_{\ell,h} \Big\Vert_p
\leq C_p \sqrt{n} \|\Pro_{0}X_{j,h} \|_{p}~.
\end{align*}
Due to Assumption (T1) and $\| \Pro_j(X_{0,h}) \|_p \leq \vartheta_{j,h,p}~$, it follows that
\begin{align*}
\maxhd \max_{k=cn}^n \| \bar{X}_h(k) \|_p
\leq \maxhd \max_{k=cn}^n \dfrac{1}{k} \sum_{j=0}^{\infty} \Big\Vert \sum_{\ell=1}^{k}\Pro_{\ell-j}X_{\ell,h} \Big\Vert_p\\
\leq  \maxhd \dfrac{1}{c\sqrt{n}}C_p \sum_{j=0}^{\infty}\|\Pro_{0}X_{j,h} \|_{p}
\lesssim \dfrac{1}{\sqrt{n}}~,
\end{align*}
Finally Cauchy-Schwarz inequality gives that
\begin{align*}
\sqrt{n}\maxhd \max_{k=cn}^n \| \bar{X}_h^2(k) \|_{p/2}
= \mathcal{O}(1)~,
\end{align*}
which completes the proof.
\end{proof}
\medskip

Based on the autocovariance estimators $\hat{\phi}^*_h(i,k)$ defined in \eqref{eq:autocovarstar} let
\begin{align*}
( \sigma_h^*(k) )^2
= \sum_{ |i| \leq \beta_n} \hat{\phi}^*_h(i,k)~,
\end{align*}
where the bandwidth is $\beta_n = n^B$ for $B$ in Assumption (S2).
The next Lemma is a uniform version of Lemma E.6 of \cite{jirak:2015a}. 
\begin{lemma}\label{lem:sigmastar}
Grant Assumptions \ref{a1} - \ref{a3} and let $c \in (0,1)$ be a fixed constant.
If $n$ is large enough, there exist $\eta>0$ , sufficiently small, such that
\begin{align*}
\Pb{ \maxhd \max_{k=c \cdot n}^n \vert (\hatsigmah^*(k))^2 - \sigmah^2 \vert \geq n^{-\eta} } \lesssim n^{-C}
\end{align*}
for a constant $C>0$.
\end{lemma}
\begin{proof}
We have the following decomposition
\begin{align}
\begin{split}\label{bandwidthdecomp}
\Prob \big( \maxhd \max_{k=c \cdot n}^n  \vert \left(\hatsigmah^{\ast}(k)\right)^2 - &\sigmah^2 \vert \geq n^{-\eta} \big) 
\leq \mathbb{P} \Big(  \maxhd \sum_{i=\beta_n+1}^{\infty}\vert \phi_h(i) \vert \geq \frac{n^{-\eta}}{2} \Big) \\
&+  \mathbb{P} \Big(  \maxhd \max_{k=c\cdot n}^n  \sum_{i=0}^{\beta_n} \vert \hat{\phi}_h(i,k) - \phi_h(i) \vert \geq \dfrac{n^{-\eta}}{2} \Big ) ~.
\end{split}
\end{align}
An adaption of Lemma E.4 from \cite{jirak:2015} yields
\begin{align*}
\sum_{i=\beta_n+1}^{\infty}\vert \phi_h(i) \vert \lesssim \delta^{\beta_n}~,
\end{align*}
uniformly with respect to $h$ and so for sufficiently large $n$ we obtain
\begin{align*}
 \mathbb{P} \Big( \maxhd \sum_{i=\beta_n+1}^{\infty}\vert \phi_h(i) \vert \geq \frac{n^{-\eta}}{2} \Big ) =0~.
\end{align*}
The second summand of the right-hand side of \eqref{bandwidthdecomp} can be bounded by Markov's inequality. i.e.
\begin{align*}
& \mathbb{P} \Big( \maxhd \max_{k=c \cdot n}^n  \sum_{i=0}^{\beta_n} \vert \hat{\phi}_h(i,k) - \phi_h(i) \vert \geq \dfrac{n^{-\eta}}{2} 
\Big ) \\
&\leq \sum_{h=1}^{d}\sum_{i=0}^{\beta_n}\sum_{k=c \cdot n}^n 
\Pb{ \vert \hat{\phi}_h(i,k) - \phi_h(i) \vert \geq n^{-\eta}/(2\beta_n+2) }\\
&\lesssim (n^{-\eta})^{-p/2}\sum_{h=1}^{d} \sum_{i=0}^{\beta_n} \sum_{k=c \cdot n}^n(\beta_n)^{p/2} \|\hat{\phi}_h(i,k) -\phi_h(i)\|_{p/2}^{p/2}~.
\end{align*}
Applying Lemma \ref{lem:uniformautocov} and using $d \lesssim n^D$, $\beta_n\sim n^B$ the last expression is bounded by
\begin{align*}
n^{\eta p/2}d \beta_n n \beta_n^{p/2}n^{-p/4} \lesssim n^{\eta p/2 +D +1 +B(p/2+1)-p/4}~.
\end{align*}
Due to Assumption (S2), we have $D+1+B(p/2+1)-p/4<0$. 
Therefore choosing $\eta>0$ sufficiently small, such that
\begin{align*}
0 < \eta < 2\Big(p/4-B(p/2+1)-1-D\Big)/p
\end{align*}
yields the claim.
\end{proof}
\noindent We can now proceed to the actual proof of Lemma \ref{lem:hatvar}
\begin{proof}
Denote with $\mathcal{S}$ the set
\begin{align*}
\mathcal{S} = 
\Big \{ S < \minhd \min\Big( \dfrac{1-t_h}{1-\hatth},~\dfrac{t_h}{\hatth} \Big ) \Big \}~,
\end{align*}
where $S$ refers to the constant used in the data separation in equation \eqref{dh12}~.
An application of Lemma \ref{lem:hatth} yields
\begin{align*}
\Pb{\mathcal{S^C}} 
&\leq \mathbb{P} \Big(  S \geq \minhd \dfrac{(1-t_h)}{1 - \hatth} \Big )  +  \mathbb{P} \Big( S \geq \minhd \dfrac{t_h}{\hatth}  \Big ) \\
&=  \mathbb{P} \Big( 1 - \minhd \dfrac{(1-t_h)}{1 - \hatth} \geq 1 - S  \Big )  +  \mathbb{P} \Big( 1 - \minhd \dfrac{t_h}{\hatth} \geq 1 - S \Big ) \\
&=  \mathbb{P} \Big(  \maxhd 1 - \dfrac{(1-t_h)}{1 - \hatth} \geq 1 - S \Big )  +  
 \mathbb{P} \Big( \maxhd 1 - \dfrac{t_h}{\hatth} \geq 1 - S \Big ) \\
&\leq  \mathbb{P} \Big(  \maxhd |\hatth - t_h| \geq (1 - S)\bt  \Big )
+  \mathbb{P} \Big( \maxhd |\hatth - t_h| \geq (1 - S)\bt  \Big )
= o(1)~.
\end{align*}
Using the upper bound
\begin{align*}
\vert \hatsigmah - \sigmah \vert
\leq \dfrac{\vert \hatsigmah^2 - \sigmah^2 \vert}{\hatsigmah + \sigmah}
\leq \dfrac{\vert \hatsigmah^2 - \sigmah^2 \vert}{\sigma_-}
\end{align*}
we have the following decomposition
\begin{align}\label{eq:hatvar12}
\begin{split}
\mathbb{P} \Big(  \maxhd \left| \hatsigmah - \sigmah \right| \geq n^{-\eta} \Big ) 
&\leq \mathbb{P} \Big(  \maxhd \left| \hatsigmah^2 - \sigmah^2 \right| \geq n^{-\eta}\sigma_-  \Big ) \\
\leq \mathbb{P} \Big( \maxhd \left| \hatsigma_{h,1}^2 - \sigmah^2 \right| \geq n^{-\eta}\sigma_-  \Big )
&+  \mathbb{P} \Big(  \maxhd \left| \hatsigma_{h,2}^2 - \sigmah^2 \right| \geq n^{-\eta}\sigma_- \Big ) ~,
\end{split}
\end{align}
where we used that $\hatsigmah^2  = \tfrac{1}{2}\big(\hatsigma_{h,1}^2 + \hatsigma_{h,2}^2 \big)$.
Since the arguments are the same, we will only treat the first summand of the second line of 
\eqref{eq:hatvar12}.
We can conclude that
\begin{align}\label{Sbenutzt}
\mathbb{P} \Big(  \maxhd \left| \hatsigma_{h,1}^2 - \sigmah^2 \right| \geq n^{-\eta}\sigma_-  \Big ) 
\leq  \mathbb{P} \Big( \maxhd \left| \hatsigma_{h,1}^2 - \sigmah^2 \right| \geq n^{-\eta}\sigma_- \cap \mathcal{S} \Big ) + o(1)~.
\end{align}
On the set $\mathcal{S}$ the inclusion $\widehat{\mathcal{D}}_{h,1} \subset \mathcal{D}_{h,1}$ holds and since there are now structural breaks within  
$\mathcal{D}_{h,1}$, the following identity holds (on the set $\mathcal{S}$) due to definition \eqref{eq:model}
\begin{align*}
\hatsigma_{h,1}
=\hatsigmah^{\ast}\left(n\max\{S \hatth , \bt \}\right)~.
\end{align*}
Further note that we have by definition
\begin{align*}
\bt n \leq \minhd n\max\{S \hatth , \bt \} = \minhd \vert \widehat{\mathcal{D}}_{h,1} \vert \leq n~.
\end{align*}
Finally, we obtain
\begin{align*}
\Prob \Big ( \maxhd \left| \hatsigma_{h,1}^2 - \sigmah^2 \right| &\geq n^{-\eta}\sigma_- \cap \mathcal{S} \Big)\\
&= \mathbb{P} \Big( \maxhd \left| \hatsigmah^{\ast}\left(n\max\{S \hatth , \bt \}\right)^2 - \sigmah^2 \right| \geq n^{-\eta}\sigma_- \cap \mathcal{S}
\Big ) \\
&\leq  \mathbb{P} \Big( \maxhd \left| \hatsigmah^{\ast}\left(n\max\{S \hatth , \bt \}\right)^2 - \sigmah^2 \right| \geq n^{-\eta}\sigma_- \Big ) \\
&\leq \mathbb{P} \Big( \maxhd \max_{k=\bt n}^n \vert \sigma_h^{\ast}(k)^2 - \sigmah^2 \vert \geq n^{-\eta}\sigma_- \Big )~.
\end{align*}
Employing Lemma \ref{lem:sigmastar} completes the proof.
\end{proof}

\subsubsection{Proof of Theorem \ref{thm:gumbel}} \label{sec713}
As $\absdmuh > C_u$ for all $h\in \setd $ we have $\Sd = \emptyset$ and $\Sd^c = \setd$, and Lemma \ref{lem:hatth} implies 
\begin{align}\label{eq:3hatth0}
\maxhd \vert \hatth - t_h \vert = o_p(n^{-1/2})
\end{align}
Observing that $\hatth \in [ \bt, 1- \bt ]$ it follows that 
\begin{align}\label{lem:3hatth(i)}
\max\limits_{h=1}^{d}\Big \vert \frac{t_h^2(1-t_h)^2}{\hatth^2(1-\hatth)^2} - 1 \Big  \vert
= o_p(n^{-1/2})
\end{align}
Moreover, one easily verifies that the function $t \to \tau(t)$ defined in \eqref{tauh} is Lipschitz-continuous on the interval $[\bt, 1-\bt]$ and 
therefore  we obtain from \eqref{eq:3hatth0} that
\begin{align}\label{lem:3hatth(ii)}
\max\limits_{h=1}^{d} \vert \tau(t_h) - \tau(\hatth) \vert = o_{\mathbb{P}}(n^{-1/2})~.
\end{align}
Finally, we note that for a sufficiently small constant $C>0$ the estimate
\begin{align}\label{lem:3hatth(iii)}
\max\limits_{h=1}^{d} \vert \hatsigmah \tau(\hatth) - \sigma_h\tau(\hatth) \vert = o_{\mathbb{P}}(n^{-C})
\end{align} 
holds, which is a direct consequence Lemma \ref{lem:hatvar} and assertion \eqref{lem:3hatth(ii)}. 
\\

\noindent
After these preparations we return to the proof of Theorem \ref{thm:gumbel}.
We recall the definition \eqref{eq:Mint} and introduce the notation 
\begin{align*}
\hatTnhb = \frac{\sqrt{n}}{\tau(t_h)\sigma_h\absdmuh} \left( \hatMint - \dmuh^2 \right)
\end{align*}
We will first show the weak convergence 
\begin{align}\label{stat312a}
a_d \big( \maxhd \hatTnhb - b_d \big ) \convd G
\end{align}
For a proof of \eqref{stat312a} we recall the definition of the statistic $ \Tnh$ in  \eqref{t1} and obtain from Theorem \ref{cor:tnhgumbel}
\begin{align*}
a_d \big ( \maxhd \Tnh - b_d \big ) \convd G~.
\end{align*}
With the representation for $\hatMint$ and $\Mint$ in \eqref{eq:Mint} and \eqref{hmat}, respectively, and the notation $\hatqh = (t_h(1-t_h))^2/(\hatth(1-\hatth))^2$ it now follows that 
\begin{align*}
&a_d \maxhd \big ( \hatTnhb - b_d \big ) \\
&= a_d \maxhd \Big ( \hatqh \frac{\sqrt{n}}{\tau(t_h)\sigma_h\absdmuh} \left( \Mint - \dmuh^2 \right) - b_d +  \frac{\sqrt{n} (\hatqh - 1) \dmuh^2}{\tau(t_h)\sigma_h\absdmuh} \Big ) =  B_{n,d}\notag 
\end{align*}
where 
\begin{align*}
B_{n,d} := a_d \maxhd \Big ( \hatqh \Tnh - b_d + (\hatqh-1) \dmuh^2 \frac{\sqrt{n}}{\tau(t_h)\sigma_h\absdmuh} \Big )~.
\end{align*}
It is easy to see that this term can be bounded by   
\begin{align*}
a_d \big ( \maxhd \hat{q}_h \Tnh - b_d \big ) - 
R_{n,d}  &\leq  B_{n,d} 
\leq a_d \big ( \maxhd \hat{q}_h \Tnh - b_d \big ) + R_{n,d}~,
\end{align*}
where the remainder satisfies 
\begin{align*}
R_{n,d}  = a_d \maxhd \left| \hat{q}_h - 1 \right| \maxhd \frac{\sqrt{n}}{\tau(t_h)\sigma_h\absdmuh} \dmuh^2
\convp 0~, 
\end{align*}
which follows observing  the inequalities  \eqref{lem:auxiliary1}, \eqref{lem:3hatth(i)} and the condition  $C_{\ell} \leq \absdmuh \leq C_{u}$.
Thus \eqref{stat312a} follows, if we can establish 
\begin{align}\label{conv:qTnh}
a_d \big ( \maxhd \hat{q}_h \Tnh - b_d \big ) \convd G~.
\end{align}
For a proof of this result we fix $x \in \mathbb{R}$ and define $u_d(x)= x/a_d + b_d$. By
\begin{align}\label{ersterschaetzers1}
\begin{split}
\Prob \Big( { 0 \leq \maxhd \hatqh \maxhd \Tnh \leq u_d(x)} \Big)
&\leq \Prob \Big({ 0 \leq \maxhd \hatqh \Tnh \leq u_d(x)} \Big)\\
&\leq \Prob \Big( { 0 \leq \minhd \hatqh \maxhd \Tnh \leq u_d(x)} \Big)~.
\end{split}
\end{align}
and
\begin{align*}
\Prob \Big({ \minhd \hatqh \maxhd \Tnh \geq 0}\Big) = 
\Prob \Big({ \maxhd \hatqh \Tnh \geq 0} \Big)=
\Prob \Big({ \maxhd \hatqh \maxhd \Tnh \geq 0}\Big)~,
\end{align*}
we obtain
\begin{align}
\begin{split}
\label{ersterschaetzers2}
\Prob \Big({ \maxhd \hatqh \maxhd \Tnh \leq u_d(x)}\Big)
&\leq \Prob \Big({ \maxhd \hatqh \Tnh \leq u_d(x)}\Big)\\
&\leq \Prob \Big({ \minhd \hatqh \maxhd \Tnh \leq u_d(x)}\Big)~.
\end{split}
\end{align}
From \eqref{lem:3hatth(i)} and $d=C_1n^D$ it follows that 
\begin{align*}
a_db_d  \big  ( \minhd \hatqh -1 \big  ) \convp 0
\;\;\; 
\text{and}
\;\;\;
a_db_d \big ( \maxhd \hatqh -1 \big ) \convp  0~, 
\end{align*}
which due to Slutsky's theorem directly implies
\begin{align*}
&\Prob \Big({ \minhd \hatqh \maxhd \Tnh \leq u_d(x)} \Big) \convn e^{-e^{-x}}~,\\
&\Prob \Big( { \maxhd \hatqh \maxhd \Tnh \leq u_d(x)}\Big) \convn e^{-e^{-x}}~.
\end{align*}
Thus we have established \eqref{conv:qTnh} and proved \eqref{stat312a}.
\\\\
To complete the proof of Theorem \ref{thm:gumbel} note that the assertion \eqref{stat312} is equivalent to
\begin{align}\label{stat312b}
\Prob \Big({ \maxhd \hatTnh \leq u_d(x)} \Big) \convn e^{-e^{-x}}~,
\end{align}
where we use again $u_d(x) = x/a_d + b_d$.
To prove this statement define
\begin{align*}
Q^{-}_{d} := \minhd \frac{\hatsigmah\tauhatth}{\sigmah \tauth}~,~
Q^{+}_{d} := \maxhd \frac{\hatsigmah\tauhatth}{\sigmah \tauth}~,
\end{align*}
and consider the set
$
\mathcal{Q}_d := \left\lbrace \left| Q^{+}_{d} - 1 \right| \vee \left| Q^{-}_{d} -1 \right| \leq \delta_d \right\rbrace 
$, 
where the involved sequence is given by $\delta_d = (\log d)^{-2}$.
The inequalities \eqref{lem:auxiliary1} and the estimate \eqref{lem:3hatth(iii)} yield
\begin{align*}
\Pb{ \left| Q^{+}_{d} - 1 \right| > \delta_d } 
&= \Prob \Big({ \Big | \maxhd \frac{\hatsigmah \tauhatth }{\sigmah \tauth  } - 1 \Big | > \delta_d }   \Big)\\ 
&\leq \Prob \Big({ \Big | \maxhd \hatsigmah \tauhatth -  \sigmah \tauth  \Big | > \delta_d c } \Big) = o(1)~.
\end{align*}
By a similar argument for the term  $\left| Q^{-}_{d} -1 \right|$ we obtain $\Pb{\mathcal{Q}_d^c}\to 0$. If $\maxhd \hatTnh \geq 0$ holds, we can conclude that 
\begin{align*}
\frac{1}{Q^+_{d}}\maxhd \hatTnhb 
&= \frac{1}{Q^+_{d}} \maxhd \frac{\sqrt{n}}{\tau(t_h)\sigma_h\absdmuh} \left( \hatMint - \dmuh^2 \right) \\
&\leq  \maxhd \frac{\sqrt{n}}{\tau(\hatth)\hatsigmah\absdmuh} \left(\hatMint - \dmuh^2 \right)
= \hatTnh
= \frac{1}{Q^{-}_d} \maxhd \hatTnhb~.
\end{align*}
Therefore the following inequalities hold
\begin{align}
\label{eq:withzero}
\begin{split}
\Prob \Big({ 0 \leq \maxhd \hatTnhb \leq u_d(x)Q^{-}_{d} } \Big) 
&\leq \Prob \Big({ 0 \leq \maxhd \hatTnh \leq u_d(x) } \Big)\\
&\leq \Prob \Big({ 0 \leq \maxhd \hatTnhb \leq u_d(x)Q^{+}_{d}}\Big)~.
\end{split}
\end{align}
Observing the identity
\begin{align*}
\Prob \Big( { \maxhd \hatTnhb \geq 0}\Big ) = \Prob \Big({\exists h: \hatMint \geq \dmuh^2 } \Big)  = \Prob \Big({\maxhd \hatTnh \geq 0}  \Big) 
\end{align*}
we can derive from \eqref{eq:withzero}
\begin{align}\label{eq:withoutzero}
\begin{split}
\Prob \Big({ \maxhd \hatTnhb \leq u_d(x)Q^{-}_{d} }\Big) 
&\leq \Prob \Big({ \maxhd \hatTnh \leq u_d(x) } \Big)\\ 
&\leq \Prob \Big({ \maxhd \hatTnhb \leq u_d(x)Q^{+}_{d}} \Big)~. 
\end{split}
\end{align}
Hence, we directly obtain 
\begin{align*}
\Prob \Big({ \maxhd \hatTnhb \leq u_d(x)Q^{+}_{d}}\Big)  
&\leq \Prob({\mathcal{Q}_d^c}) + \Prob \Big({ \maxhd \hatTnhb \leq u_d(x)Q^{+}_{d} \cap \mathcal{Q}_d}\Big) \\
&\leq o(1) + \Prob \Big({ \maxhd \hatTnhb \leq u_d(x) + u_d(x)\delta_d}\Big)~.
\end{align*}
Now we fix $\varepsilon > 0$ and note that the inequality $u_d(x - \varepsilon) < u_d(x) + u_d(x)\delta_d < u_d(x+\varepsilon)$ holds if $n$ (or equivalently $d$) is sufficiently large.
The weak convergence \eqref{stat312a} then yields
\begin{align*}
\limsup_{d,n \to \infty}\; &\Prob \Big({ \maxhd \hatTnhb \leq u_d(x)\mathcal{Q}_{d}^+ } \Big)\\
&\leq \limsup_{d,n \to \infty} \Prob \Big({ \maxhd \hatTnhb \leq u_d(x + \varepsilon)}\Big) = e^{-e^{-(x+\varepsilon})}~.
\end{align*}
Using Bonferroni's inequality we can proceed similarly for the lower bound of \eqref{eq:withoutzero}, i.e. 
\begin{align*}
\liminf_{d,n \to \infty}  \Prob \Big( \maxhd \hatTnhb \leq &u_d(x)Q_{d}^- \Big) 
\geq \liminf_{d,n \to \infty} \Prob \Big({ \maxhd \hatTnhb \leq u_d(x)Q_{d}^- \cap \mathcal{Q}_d} \Big) \\
&\geq \liminf_{d,n \to \infty} \Prob \Big({ \maxhd \hatTnhb \leq u_d(x) - u_d(x)\delta_d} \Big) - \Pb{\mathcal{Q}_d^c}\\
&\geq \liminf_{d,n \to \infty} \Prob \Big({ \maxhd \hatTnhb \leq u_d(x-\varepsilon)}\Big) 
= e^{-e^{-(x-\varepsilon})}~.
\end{align*}
The assertion \eqref{stat312b} then follows by $\varepsilon \to 0$, which completes the proof of Theorem \ref{thm:gumbel}. 

\subsubsection{Proof of Corollary \ref{cor:adjustedgumbel}}
At first we consider the case where $m_d:=\vert{\cal M}_d \vert = c \cdot d + o(d) $ for some constant $c\in (0,1]$
and note that in this case 
\begin{align*}
a_d \Big ( \max_{h\in{\cal M}_d} \hatTnh - b_d \Big )
= \frac{a_d}{a_{m_d}}\bigg( a_{m_d}\max_{h \in{\cal M}_d} \hatTnh -a_{m_d}b_{m_d} \bigg) + a_db_{m_d}-a_db_d~.
\end{align*}
Theorem \ref{thm:gumbel} yields
$ a_{m_d}\max_{h \in{\cal M}_d} \hatTnh -a_{m_d}b_{m_d} \convd G $ and furthermore we have 
\begin{align*}
\frac{a_d}{a_{m_d}}
\underset{d \to \infty}{\longrightarrow} 1
\;\;\;
\text{and}
\;\;\;
a_{d}b_{m_d} - a_db_d 
\underset{d \to \infty}{\longrightarrow} \log c~.
\end{align*}
A short calculation therefore leads to
$ a_d \big ( \max_{h\in{\cal M}_d} \hatTnh - b_d \big ) \convd G + \log c$~.
The case $m_d = o(d) $ can be treated similarly. 
Finally, statement \eqref{eq:smallerGumbel} is a consequence of the inequality
\begin{align*}
\max_{h \in{\cal M}_d} \hatTnh \leq \maxhd \hatTnh~.
\end{align*}

\section{Proofs of the results in Section 4}\label{proofsec4}

\subsection{Proof of Theorem \ref{thm:level}} \label{sec721}
By Assumption (C1) and the definition of $\hatth$ in \eqref{eq:hatth}, there exists a global constant $C(\bt) > 1$ such that
\begin{align}\label{eq:Cbt}
\dfrac{(t_h(1-t_h))^2}{(\hatth(1-\hatth))^2} \leq C(\bt)~.
\end{align}
Recall the definition of the set $\Sd$ in \eqref{sd}, choose a constant $\Delta_-^2>\zeta>0$ and consider the following decomposition of the set $\setd \setminus \Sd$
\begin{align}\label{eq:thmleveldecomp}
\begin{split}
\Id &:= \{ h \in \setd \;  | \;  (\Delta_h-\zeta)/\sqrt{C(\bt)} \geq \vert \dmuh \vert > 0 \}~,\\
\Ed &:= \{ h \in \setd \;  | \;  \Delta_h \geq \vert \dmuh \vert > (\Delta_h -\zeta )/\sqrt{C(\bt}) \}~.
\end{split}
\end{align}
Using the representation
\begin{align*}
\TS =
\max \Big \lbrace a_d  \Big ( \max_{h\in \Sd} \hatTnhdelta - b_d  \Big ), a_d  \Big ( \max_{h\in \Id } \hatTnhdelta - b_d  \Big ), a_d  \Big ( \max_{h\in \Ed} \hatTnhdelta - b_d  \Big ) \Big \rbrace~.
\end{align*}
the first assertion \eqref{limsup} follows from the following three statements
\begin{align}
&a_d  \Big ( \max_{h\in \Sd} \hatTnhdelta - b_d  \Big  ) \convp - \infty,\label{eq:s1niveau}\\
&a_d  \Big  ( \max_{h\in \Id} \hatTnhdelta - b_d  \Big  ) \convp - \infty,\label{eq:s2niveau}\\
\limdn &\Prob  \Big ({ a_d \ \Big ( \max_{h\in \Ed} \hatTnhdelta - b_d \ \Big ) \geq \gua } 
 \Big ) \leq \alpha.\label{eq:s3niveau}
\end{align}
\textbf{Proof of (\ref{eq:s1niveau}):} 
Observing the definition of $ \hatTnhdelta$ in \eqref{eq:tnhdelta} we obtain the inequality 
\begin{align}\label{eq:s11niveau}
\begin{split}
a_d \Big  ( \max_{h \in \Sd}  &\hatTnhdelta - b_d  \Big  ) \\
&\leq a_d \max_{h \in \Sd} \frac{\sqrt{n}}{\tau(\hatth)\hatsigmah\Delta_h} \hatMint - a_d\min_{h \in \Sd} \frac{\sqrt{n}}{\tau(\hatth)\hatsigmah\Delta_h} \Delta_h^2 - a_db_d~.
\end{split}
\end{align}
The first summand of this expression is further bounded by
\begin{align}\label{step12niveau}
a_d \max_{h \in \Sd } \frac{\sqrt{n}}{\tau(\hatth)\hatsigmah\Delta_h} \hatMint 
\lesssim a_d \max_{h \in \Sd } \sqrt{n} \cdot \Mint~,
\end{align}
and arguing as in the proof of Lemma \ref{lem:tnh1} yields $a_d \max_{h \in \Sd } \sqrt{n} \cdot \Mint \convp 0~.$
For the second summand of \eqref{eq:s11niveau} we can use $\frac{\sqrt{n}}{\tau(\hatth)\hatsigmah\Delta_h} \Delta_h^2 > 0 $ and $a_db_d \sim \log d$ to obtain
\begin{align*}
a_d\min_{h \in \Sd }\frac{\sqrt{n}}{\tau(\hatth)\hatsigmah\Delta_h}  \Delta_h^2 + a_db_d \convdn \infty,
\end{align*}
which yields \eqref{eq:s1niveau}.\\

\noindent
\textbf{Proof of (\ref{eq:s2niveau}):} By definition of the set $\Id$, we get $\Delta_h^2 \geq C(\bt)\dmuh^2 + \zeta $,
which leads to
\begin{align}\label{eq:s21niveau}
\begin{split}
a_d \Big ( \max_{h \in \Id } \hatTnhdelta - b_d \Big )
\leq  a_d  &\Big( \max_{h \in \Id} \frac{\sqrt{n}}{\tau(\hatth)\hatsigmah\Delta_h} \Big  ( \hatMint - C(\bt)\dmuh^2 \Big  ) - b_d \Big  )\\ 
&- a_d\min_{h \in \Id}\frac{\sqrt{n}}{\tau(\hatth)\hatsigmah\Delta_h} \zeta~.
\end{split}
\end{align}
For the second summand of the last expression it holds that
\begin{align}\label{eq:s22niveau}
a_d\sqrt{n} \min_{h \in \Id} \frac{\zeta}{\tau(\hatth)\hatsigmah\Delta_h}
\gtrsim \frac{a_d\sqrt{n}}{\tau_+s_+\Delta_+}
\convn \infty~,
\end{align}
From inequality \eqref{eq:Cbt}, we obtain $\hatMint \leq C(\bt)\Mint$, which gives the following bound for the first summand of \eqref{eq:s21niveau}
\begin{align}\label{eq:s23niveau}
\begin{split}
a_d \Big ( \max_{h \in \Id} \frac{\sqrt{n}}{\tau(\hatth)\hatsigmah\Delta_h}&\Big  ( \hatMint - C(\bt)\dmuh^2 \Big ) - b_d \Big  )\\
&\lesssim a_d \Big  ( \max_{h \in \Id} \frac{\sqrt{n}}{\tau(\hatth)\hatsigmah\Delta_h} \Big  ( \Mint - \dmuh^2 \Big ) - b_d \Big  )~.
\end{split}
\end{align}
Similar to \eqref{eq:tnhexpansion} we can use the following decomposition
\begin{align*}
\frac{\sqrt{n}}{\tau(\hatth)\hatsigmah\Delta_h}\left( \Mint - \dmuh^2 \right)= S_{n,h}^{(1)} + S_{n,h}^{(2)}~,
\end{align*}
where the quantities $S_{n,h}^{(1)}$ and $  S_{n,h}^{(2)}$ are given by 
\begin{align*}
S_{n,h}^{(1)} &= \frac{3\sqrt{n}}{t_h^2(1-t_h)^2\tau(\hatth)\hatsigmah\Delta_h}\int_0^1 \left(\mathbb{U}_{n,h}(s) - \mu_h(s,t_h) \right)^2 ds,\\
S_{n,h}^{(2)} &= \frac{6\sqrt{n}}{t_h^2(1-t_h)^2\tau(\hatth)\hatsigmah\Delta_h}\int_0^1 \mu_h(s,t_h)(\mathbb{U}_{n,h}(s) - \mu_h(s,t_h))ds~,
\end{align*}
respectively.
Now we have an upper bound for \eqref{eq:s23niveau} given by
\begin{align*}
a_d \max_{h \in \Id} S_{n,h}^{(1)} + a_d \Big ( \max_{h \in \Id} S_{n,h}^{(2)} - b_d \Big)~. 
\end{align*}
Similar as in the proof of (\ref{eq:s1niveau}) one easily shows that $a_d \max\limits_{h \in \Id } S_{n,h}^{(1)} = o_{\mathbb{P}}(1)$.
In the case that $S_{n,h}^{(2)} \geq 0$ we have
\begin{align*}
S_{n,h}^{(2)}
\lesssim  S_{n,h}^{(3)}:= 
\frac{6\sqrt{n}}{t_h^2(1-t_h)^2\tau(t_h)\sigmah\absdmuh}\int_0^1 \mu_h(s,t_h)(\mathbb{U}_{n,h}(s) - \mu_h(s,t_h))ds~,
\end{align*}
which gives 
\begin{align*}
a_d \Big ( \max_{h \in \Id} S_{n,h}^{(2)} - b_d \Big  )
&\lesssim a_d \Big ( \max_{h \in \Id } \max \left\lbrace S_{n,h}^{(3)},0 \right\rbrace - b_d \Big )\\
&\leq \max \Big  \lbrace a_d \Big  ( \max_{h \in \Id} S_{n,h}^{(3)} - b_d\Big  ) , - a_db_d \Big  \rbrace.
\end{align*}
Applying Lemma \ref{lem:gaussappro} yields
\begin{align*}
\limsup_{d,n \to \infty} \Prob \Big ( { a_d \Big ( \max_{h \in \Id} S_{n,h}^{(3)} - b_d \Big ) > x } \Big )  \leq \Pb{ G > x}
\end{align*}
for all $x \in \mathbb{R}$ and  consequently the right hand side of   \eqref{eq:s23niveau} is of order $O_{\mathbb{P}}(1)$.
Now (\ref{eq:s2niveau}) follows from \eqref{eq:s21niveau} and  \eqref{eq:s22niveau}.\\

\noindent
\textbf{Proof of (\ref{eq:s3niveau}):} Observing that $d_h := \Delta_h^2 - \dmuh^2 \geq 0$ we obtain 
\begin{align}\label{eq:s31niveau}
a_d \Big ( \max_{h \in \Ed} \hatTnhdelta - b_d  \Big )
&\leq a_d \Big  ( \max_{h \in \Ed} \frac{\sqrt{n}}{\tau(\hatth)\hatsigmah\Delta_h}\Big  ( \hatMint - \dmuh^2 \Big  ) - b_d\Big  )~, 
\end{align}
As $\alpha \in \left( 0, 1 - e^{-1} \right]$ the  quantile of the Gumbel distribution satisfies 
$ \gua = -\log ( \log ( \tfrac{1}{1-\alpha} ) ) \geq 0~,$
and we can proceed as follows
\begin{align*}
\Prob \Big (  a_d  \Big  ( \max_{h \in \Ed } &\hatTnhdelta - b_d  \Big ) > \gua   \Big )\\
&\leq \Prob \Big (  \max_{h \in \Ed }\frac{\absdmuh} {\Delta_h} \cdot a_d \Big ( \max_{h \in \Ed} \hatTnh - b_d \Big ) > \gua \Big)\\
&\leq \Prob \Big ( {  a_d \Big  ( \max_{h \in \Ed} \hatTnh - b_d \Big  ) > \gua } \Big ).
\end{align*}
An application of Corollary \ref{cor:adjustedgumbel} now yields
\begin{align*}
\limsup_{d,n \to \infty} \Prob \Big ( { a_d \Big ( \max_{h \in \Ed} \hatTnh - b_d \Big  ) > \gua} \Big ) \leq \Pb{ G > \gua} = \alpha~,
\end{align*}
which gives assertion \eqref{eq:s3niveau} and completes the proof of assertion \eqref{limsup}.\\

\noindent It remains to show assertion \eqref{eq:thmlevelcases} under the additional assumption of \eqref{eq:thmlevelextra}. Note that under the latter assumption, we can further decompose the set $\Ed$ into $\Ed = (\Ed \setminus \Bd) \cup \Bd$ and observe that \eqref{eq:thmlevelextra} yields
\begin{align*}
\Ed \setminus \Bd = \{ h \in \setd \;  | \;  \Delta_h - C_{\Delta} \geq \absdmuh > (\Delta_h -\zeta )/\sqrt{C(\bt)} \}.
\end{align*}
Again, we can examine both sets separately.
For $\Ed \setminus \Bd$ we have
\begin{align}\label{EdohneBd}
a_d \left( \max_{h \in \Ed \setminus \Bd} \hatTnhdelta - b_d \right) \leq a_d \left( \max_{h \in \Ed \setminus \Bd} \hatTnh - b_d \right) -a_d\min_{h\in \Ed \setminus \Bd} \dfrac{\sqrt{n}}{\tau(\hatth)\hatsigmah\Delta_h}~.
\end{align}
By definition of $\Ed \setminus \Bd$ we obtain that the second summand on the right-hand side of \eqref{EdohneBd} tends (in probability) to $-\infty$. Due to the lower bound $\min_{h \in \Ed \setminus \Bd} \absdmuh > (\Delta_{-} - \zeta)/\sqrt{C(\bt)}$, which holds uniformly in $d$, we can apply Corollary \ref{cor:adjustedgumbel} to the first summand of the right-hand side of \eqref{EdohneBd}, which then gives
\begin{align*}
a_d \left( \max_{h \in \Ed \setminus \Bd} \hatTnhdelta - b_d \right) \convp -\infty~.
\end{align*}  
On the set $\Bd = \Ed$ we can directly apply Corollary \ref{cor:adjustedgumbel}, so that we obtain \eqref{eq:thmlevelcases}. 

\subsection{Proof of Theorem \ref{thm:consistent}}\label{sec732}
It follows from Theorem 3 of \cite{wu:2005} that
\begin{align}\label{eq:partialsumconv}
\Big \lbrace \frac{1}{\sqrt{n}} \sum_{j=1}^{\floor{ns}} \big(Z_{j,h} - \mean{Z_{j,h}}\big) \Big \rbrace_{s \in [0,1]} 
\convd \left\lbrace 
\sigma_h W_s \right\rbrace_{ s \in [0,1]}~,
\end{align}
where $ \left\lbrace W_s \right\rbrace_{ s \in [0,1]}$ denotes the (standard) Brownian motion on the interval $[0,1]$. 
The definition of $\mu_h(s,t_h)$ in \eqref{mu}, $\mean{\cusums}= \mu_h(s,t_h)(1+o(1))$  (uniformly with respect to $s\in [0,1]$) and the continuous mapping theorem yield
\begin{align}\label{eq:cusumconv}
\left\lbrace \sqrt{n} \big(\cusums -\mu_h(s,t_h)\big) \right\rbrace_{s \in [0,1]} \convd  \left\lbrace \sigma_h B_s \right\rbrace_{s \in [0,1]}~,
\end{align}
where $ \left\lbrace  B_s \right\rbrace_{s \in [0,1]}$ denotes a (standard) Brownian bridge.
Observing \eqref{eq:tnhexpansion} we get
\begin{align*}
\sqrt{n}\left(\Mint - \dmuh^2 \right)
&= \frac{3\sqrt{n}}{(t_h(1-t_h))^2} \int_0^1 (\cusums  - \mu_h(s,t_h))^2 ds \\
&+ \frac{6\sqrt{n}}{(t_h(1-t_h))^2}\int_0^1 \mu_h(s,t_h)(\cusums  - \mu_h(s,t_h))ds~. 
\end{align*}
Statement \eqref{eq:cusumconv} and the continuous mapping theorem imply
\begin{align*}
\sqrt{n}\left(\Mint - \dmuh^2 \right) \convd \frac{6}{(t_h(1-t_h))^2}\int_0^1 \mu_h(s,t_h)\sigma_hB(s)ds~.
\end{align*}
It is well known, that the expression on the right-hand side follows a centered normal distribution and a straightforward calculation shows that its variance is given by $\dmuh^2\tau^2(t_h)\sigma_h^2$. Replacing $\sigma_h$ and $t_h$
by  the estimators $\hatsigmah$ and $\hatth$ 
we obtain from  Lemmas \ref{lem:hatth} and \ref{lem:hatvar} the weak convergence
\begin{align} \label{lem:onedimconv}
\frac{\sqrt{n}}{\tau(\hatth)\hatsigmah} \Big ( \hatMint - \dmuh^2 \Big ) \convd \mathcal{N}(0, \dmuh^2)~,
\end{align}
for each (fixed) $h \in \mathbb{N}$ provided that $\absdmuh >0$.
\noindent 
After these preparations we are ready to prove the consistency of the test \eqref{asymtest}. 
If the alternative hypothesis $\HA$ is valid, we can fix $k \in \setd$, such that $d_{k}:= \Delta \mu_{k}^2 - \Delta_{k}^2 >0$. 
From the definition of the test statistic $\TS$ in \eqref{eq:TS} we obtain 
\begin{align*}
\TS 
&\geq a_d \left( \hat{T}_{n,k}^{(\Delta)} - b_d \right) 
= a_d \Big ( \frac{\sqrt{n}}{\tau(\hat{t}_k) \sigma_k \Delta_k}\left( \hat{\mathbb{M}}^2_k - \Delta \mu_k^2 \right) 
+ \frac{\sqrt{n}}{\tau(\hat{t}_k) \sigma_k \Delta_k}d_k - b_d \Big )~,
\end{align*}
which gives 
\begin{align}\label{eq:consistent1}
\Pb{ \TS > \gua } 
&\geq 
\Prob \Big( { \frac{\sqrt{n}}{\tau(\hat{t}_k) \sigma_k \Delta_k} \left( \hat{\mathbb{M}}^2_{k} - \Delta \mu_{k}^2 \right) > \frac{\gua}{a_d} + b_d       
- \frac{\sqrt{n}}{\tau(\hat{t}_k) \sigma_k \Delta_k} d_k } \Big)~.
\end{align} 
Using $b_d \sim \log d $ and $\frac{\sqrt{n}}{\tau(\hat{t}_k) \sigma_k \Delta_k} \gtrsim \sqrt{n}$ leads to
\begin{align}\label{testkons1}
\frac{\gua}{a_d} + b_d - \frac{\sqrt{n}}{\tau(\hat{t}_k) \sigma_k \Delta_k} d_k  \convp -\infty~. 
\end{align}
On the other hand we obtain from \eqref{lem:onedimconv}
\begin{align*}
\frac{\sqrt{n}}{\tau(\hat{t}_k) \sigma_k \Delta_k}\left( \hat{\mathbb{M}}^2_k - \Delta \mu_k^2 \right) \convd \frac{\vert \Delta \mu_k \vert}{\Delta_k}\cdot \mathcal{N}\left(0, 1\right)~,
\end{align*}
and the assertion of Theorem \ref{thm:consistent} follows.

\subsection{Proof of Theorem  \ref{lem:identification}}
Due to $\gua/a_d + b_d > 0$ we deduce 
\begin{align*}
\Prob ({ \Rd \subset \hatRd(\alpha) } )
&= \Prob \Big ( { \min_{h \in \Rd} \hatTnhdelta > \gua/a_d + b_d} \Big) \\
&\geq  \Prob \Big ({ \min_{h \in \Rd} \frac{\sqrt{n}}{\tau(\hat{t}_h) \hatsigmah \absdmuh} \big (\hatMint - \Delta_h^2 \big )  > \gua/a_d + b_d} \Big)~.
\end{align*}
Using the notation  $d_h = \dmuh^2 - \Delta_h^2$ we get
\begin{align*}
\Prob ( \Rd &\subset \hatRd(\alpha)  ) 
\geq \Prob\Big ( { \min_{h \in \Rd} \hatTnh > \gua/a_d + b_d - \min_{h \in \Rd} d_h\frac{\sqrt{n}}{\tau(\hat{t}_h) \hatsigmah \absdmuh}} \Big )  \\
&= \Prob\Big  ({ a_d \min_{h \in \Rd} \big ( \hatTnh  + b_d \big ) \geq \gua +2a_db_d -a_d\min_{h \in \Rd} d_h \frac{\sqrt{n}}{\tau(\hat{t}_h) \hatsigmah \absdmuh} }
\Big ) 
\end{align*}
By assumption \eqref{eq:mindistance} we have
\begin{align*}
n^C \min_{h \in \Rd} d_h 
&\geq n^C \min_{h \in \Rd} \left( \absdmuh  - \Delta_h \right)\Delta_{-} \convn \infty~, 
\end{align*}
which implies (as $1   \lesssim \tau(\hat{t}_h) \hatsigmah \absdmuh \lesssim 1 $)
\begin{align*}
a_db_d - a_d\min_{h \in \Rd} d_h \frac{\sqrt{n}}{\tau(\hat{t}_h) \hatsigmah \absdmuh} \convdn -\infty~.
\end{align*}
By arguments similar to those in the proofs of Section \ref{sec3} one can show that for all $x \in \mathbb{R}$
$$ \liminf_{d,n \to \infty} \Prob \Big( { a_d \min_{h \in \Rd} \big( \hatTnh  + b_d \big ) \geq x} \Big)  \geq \Pb{ - G \geq x},$$
which yields assertion \eqref{eq:RdIncludedProb}.
\noindent
For a proof of \eqref{eq:RdEqualProb} we apply Bonferroni's inequality, which gives
 \begin{align*}
\Pb{ \hatRd(\alpha) = \Rd} 
&= \Pb{ \hatRd(\alpha) \subset \Rd , \Rd \subset \hatRd(\alpha) } \\
&\geq 1 - \Pb{ \hatRd(\alpha) \nsubseteq \Rd} - \Pb{ \Rd \nsubseteq \hatRd(\alpha) }.
\end{align*}
By the arguments in the previous paragraph we have $\Pb{ \Rd \nsubseteq \hatRd(\alpha) }=o(1)$ and Theorem \ref{thm:level} gives
\begin{align*}
\limsup_{d,n \to \infty} \Pb{ \hatRd(\alpha) \nsubseteq \Rd} = 
\limsup_{d,n \to \infty} \Pb{ \max_{ h \in \Rd^c} \hatTnhdelta > \gua /a_d + b_d  } \leq \alpha~,
\end{align*}
which finishes the proof.

\section{Proofs of the results in Section 5}
To establish the bootstrap results, we recall the definition of the set $\Sd$ in \eqref{sd} and we introduce the set
\begin{align}\label{eq:Ld}
\Ld = \Big\{ \forall\, h \in \Sd^c:\; n t_h \in (K\Lhminus, K\Lhplus) \Big \}~,
\end{align}
which represents the event, that the locations of all change points are identified correctly. We need the following basic properties.  

\begin{lemma}\label{lem:basicbootstrap}
If the assumptions of Section \ref{subsec:assump} and Assumption \ref{assump:bootstrap} hold, then
\begin{enumerate}[(i)]
\item $n^C\max\limits_{h=1}^d \left\vert \hatdmuh - \dmuh  \right\vert \convp 0$ if $C<1/2$,
\item $\Pb{ \Ld^c } \lesssim n^C$ for a sufficiently small constant $C>0$.
\end{enumerate}
\end{lemma}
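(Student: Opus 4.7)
\textbf{Proof plan for Lemma \ref{lem:basicbootstrap}.}
The plan is to establish part (ii) first, since the analysis of (i) essentially conditions on the good event $\Ld$. For (ii), the definitions in \eqref{eq:blocklimits} give the deterministic bounds $K\Lhminus \in [\hatth n - 3K/2,\hatth n - K/2]$ and $K\Lhplus \in [\hatth n + K/2,\hatth n + 3K/2]$, so the event $\Ld^c$ implies $|\hatth - t_h|\geq c\,K/n$ for some $h\in \Sd^c$ and a constant $c>0$. By Assumption (B2) together with the polynomial rate in Lemma \ref{lem:hatth} (a quantified version of Corollary~3.1 in \cite{jirak:2015} that gives an exponential/polynomial tail, not just $o_\Prob$), the condition $\log n/(K(\mu^\ast_d)^2)\to 0$ makes $K/n$ a rate strictly slower than the rate at which $\max_{h \in \Sd^c}|\hatth - t_h|$ concentrates. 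A union bound over the $d\lesssim n^D$ components, combined with $d = n^D$ (S1), then yields $\Pb{\Ld^c}\lesssim n^{-C}$ for sufficiently small $C>0$.

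For (i), split the maximum as
\begin{align*}
\max_{h=1}^d |\hatdmuh - \dmuh| \leq \max_{h\in \Sd}|\hatdmuh| + \max_{h\in \Sd^c}|\hatdmuh - \dmuh|,
\end{align*}
and work on $\Ld$. For components in $\Sd$ the time series $\{Z_{j,h}\}_j$ is stationary, so both averages $\bar Z_h^\pm$ are empirical means of centered $X_{j,h}$'s over index sets of length $\gtrsim n$. For components in $\Sd^c$, on $\Ld$ all observations forming $\bar Z_h^-$ lie strictly before the true change point and all observations forming $\bar Z_h^+$ lie strictly after, so
\begin{align*}
\hatdmuh - \dmuh = \frac{1}{K\Lhminus}\sum_{j=1}^{K\Lhminus} X_{j,h} - \frac{1}{K(L-\Lhplus)}\sum_{j=K\Lhplus+1}^{KL} X_{j,h}.
\end{align*}
Assumption (C1) ensures $K\Lhminus,\,K(L-\Lhplus)\gtrsim n$, and the random endpoints $\Lhminus,\Lhplus$ take at most $L$ values each.

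The main obstacle is the uniform control of these averages over both $h$ and the random endpoints. I would use a double maximal inequality: first apply a union bound over the at most $L^2 = n^{2\ell}$ possible realizations of $(\Lhminus,\Lhplus)$, then over $h\in\{1,\dots,d\}$, the latter yielding at most $d L^2 \lesssim n^{D+2\ell}$ terms. The physical dependence Assumption (T1) together with Assumption (M1) permits a Nagaev/Rosenthal-type tail bound as in Theorem 2.5 of \cite{jirak:2015}, giving that each $\frac{1}{m}\sum_{j=1}^m X_{j,h}$ with $m\gtrsim n$ has sub-Gaussian-type tails of order $\sqrt{\log(dL^2)/n}\lesssim \sqrt{\log n/n}$. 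Combining these yields
\begin{align*}
\max_{h=1}^d |\hatdmuh - \dmuh| = O_\Prob\bigl(\sqrt{\log n/n}\bigr) = o_\Prob(n^{-C})
\end{align*}
for any $C<1/2$. Finally, the restriction to the event $\Ld$ is removed at the cost of the probability bound from (ii), which is negligible.
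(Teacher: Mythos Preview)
Your proposal is correct and follows the same overall strategy as the paper: for (i) you condition on $\Ld$, observe that on this event the two block averages $\bar Z_h^\pm$ reduce to averages of the centered $X_{j,h}$'s over intervals of length $\gtrsim n$, and then combine a Nagaev-type tail bound with a union bound over $h$; for (ii) you reduce $\Ld^c$ to a deviation of $\hatth$ and invoke the quantitative rate behind Lemma~\ref{lem:hatth}, which is exactly what the paper does by citing Theorem~C.12 in \cite{jirak:2015a}.

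The one noteworthy technical difference is how the random endpoints $K\Lhminus,K\Lhplus$ are handled. You propose an additional union bound over their at most $L^2$ possible values, which costs an extra polynomial factor $n^{2\ell}$ in the tail estimate. The paper instead absorbs the randomness of the endpoint into a maximal partial sum: from $K\Lhminus\geq C_0 n$ one gets
\[
\Big|\frac{1}{K\Lhminus}\sum_{j=1}^{K\Lhminus}X_{j,h}\Big|\leq \frac{1}{C_0 n}\max_{1\leq k\leq n}\Big|\sum_{j=1}^{k}X_{j,h}\Big|,
\]
and then applies the Fuk--Nagaev inequality (Lemma~E.3 in \cite{jirak:2015a}) directly to the maximum, so that only the union bound over $h$ remains and Assumption~(S2) alone suffices. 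Your route still goes through, but the paper's maximal-inequality trick is cleaner and avoids having to check that the extra $L^2$ factor is absorbed. Also, you appeal to sub-Gaussian tails via (M1); the paper uses only the $p$-th moment Fuk--Nagaev bound, which is all that is needed here.
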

\begin{proof}
For assertion \textit{(i)} fix $\varepsilon > 0$ and observe
\begin{align}\label{eq:meanBSsep}
\begin{split}
&\Prob \big(  n^C \maxhd \vert \hatdmuh - \dmuh \vert > \varepsilon   \big)\\
&\leq \Prob \Big( n^C\maxhd \Big \vert \frac{1}{K\widehat{L}_h^-}\sum_{j=1}^{K\widehat{L}_h^-}Z_{j,h} - \mu_{h}^{(1)} \Big\vert > \varepsilon/2 \cap \Ld \Big )\\
&+ \Prob \Big ( n^C\maxhd \Big \vert \frac{1}{K(L-\widehat{L}_h^+)}\sum_{j=K\widehat{L}_h+1}^n Z_{j,h} - \mu_{h}^{(2)} \Big \vert > \varepsilon/2 \cap \Ld \Big )
+ o(1)~.
\end{split}
\end{align}
The first two summands of the right-hand side of \eqref{eq:meanBSsep} exhibit the same structure, so we only treat the first of them.
Note that on the event $\Ld$, it holds that
\begin{align*}
\left(Z_{j,h} - \mu_{h}^{(1)}\right)I\{j\leq K\widehat{L}_h^{-}\} = X_{j,h}I\{j\leq K\widehat{L}_h^{-}\}.
\end{align*} 
Further there exists a constant $0<C_0<1$, such that the inequalities $C_0n \leq n\bt-K \leq K\widehat{L}_h^{-} \leq n\hatth$ hold. This implies
\begin{align*}
\Prob \Big ( n^C\maxhd \Big  \vert \frac{1}{K\widehat{L}_h^-}&\sum_{j=1}^{K\widehat{L}_h^-}Z_{j,h} - \mu_{h,1} \Big  \vert > \varepsilon/2 \cap \Ld \Big  )\\
&\leq \sum_{h=1}^d \Prob \Big ( \Big  \vert \frac{1}{K\widehat{L}_h^-}\sum_{j=1}^{K\widehat{L}_h^-}X_{j,h} \Big \vert > n^{-C}\varepsilon/2 \Big )\\
&\leq \sum_{h=1}^d \Prob \Big (  \max_{k=1}^n \Big  \vert \sum_{j=1}^{k}X_{j,h} \Big \vert > C_0n^{1-C}\varepsilon/2 \Big ).
\end{align*}
Since $1-C>1/2$ we obtain by the Fuk-Nagaev inequality (see Lemma E.3 in \cite{jirak:2015a}) for sufficiently large $n$
\begin{align*}
\sum_{h=1}^d \Prob \Big (  \max_{k=1}^n \Big \vert \sum_{j=1}^{k}X_{j,h} \Big \vert > C_0n^{1-C}\varepsilon/2 \Big )
\lesssim n^D\frac{n}{ n^{(1-C)p}}
&= n^{D+(C-1)p+1} \\
&\leq n^{D - p/2 +1}
= o(1)~,
\end{align*}
where we also used $D \leq p/2 - 2$. Assertion \textit{(ii)} is shown in the proof of Theorem C.12 in \cite{jirak:2015a}. 
\end{proof}

\begin{proof}[\textbf{Proof of Lemma \ref{lem:BSstableset}}]
This is a consequence of Lemma \ref{lem:basicbootstrap} (i) and the inequality (for any $\varepsilon >0$)
\begin{align*}
\Prob \big ( { a_d \big ( \max_{h \in \Sd} B_{n,h} - b_d   \big ) > \varepsilon } \big )  
\leq \Prob \Big ( { \max_{h \in \Sd} \vert \hatdmuh \vert > n^{-1/4} } \Big)~. 
\end{align*}
\end{proof}
\subsection{Proof of Theorem \ref{thm:BSunstableset}}
For the proof we introduce the following more simple version of the bootstrap CUSUM-process $\{ \mathbb{U}_{n,h}^{(L)}(s)\}_{s \in [0,1]}$ introduced in \eqref{eq:BSCUSUM} 
\begin{align*}
\widetilde{\mathbb{U}}_{n,h}^{(L)}(s)&:= \dfrac{1}{n}\sum_{\ell=1}^{\floor{Ls}}\xi_{\ell}\widehat{V}_{\ell,h}-\dfrac{\floor{Ls}}{Ln}\sum_{\ell=1}^{L}\xi_{\ell}\widehat{V}_{\ell,h}~,
\end{align*}
where the truncation is not conducted within the blocks. We make also use of the following extra notation
\begin{align*}
\widetilde{B}_{n,h}^{\diamond} &=  \dfrac{6\sqrt{n}}{\sigmah\tau(t_h)(t_h(1-t_h))^2} \int_0^1 \widetilde{\mathbb{U}}_{n,h}^{(L)}(s)k(s,t_h)ds~,\\
\widetilde{B}_{n,h}^{\ast} &=  \dfrac{6\sqrt{n}}{\sigmah\tau(t_h)(t_h(1-t_h))^2} \int_0^1 \widetilde{\mathbb{U}}_{n,h}^{(L)}(s)k(s,\hatth)ds~,\\
B_{n,h}^{\ast} &= \dfrac{6\sqrt{n}}{\sigmah\tau(t_h)(t_h(1-t_h))^2} \int_0^1 \mathbb{U}_{n,h}^{(L)}(s)k(s,\hatth)ds~,\\
B_{n,h}^{(I)} &=  \frac{6\sqrt{n}}{\widehat{s}_h\tau(\hatth)(\hatth(1-\hatth))^2} \int_0^1 \mathbb{U}_{n,h}^{(L)}(s)k(s,\hatth)ds~.
\end{align*}
The theorem's claim is a direct consequence of the next five lemmas. We will use the notation $\PbZ{\cdot} = \Pb{ \cdot \vert \Zn }$. and frequently apply that the implication $\Pb{A_n} = o(1) \Rightarrow \PbZ{A_n} = o_{\mathbb{P}}(1)$ holds for all sequences of measurable sets $\{A_n\}_{n \in \mathbb{N}}$.

\begin{lemma}\label{lem:BSTheorem1}
The weak convergence
\begin{align*}
a_d \left( \maxhSdc \widetilde{B}_{n,h}^{\diamond} -b_d \right)
\convd
\left\{
\begin{array}{rl}
  G           &\text{if}\;\; \limd |\Sd^c|/d = 1 ~, \vspace{0.2cm} \\	
  G + \log c  &\text{if}\;\; \limd |\Sd^c|/d = c \;\text{ for } c \in (0,1)~,\vspace{0.2cm} \\
 -\infty      &\text{if}\;\; \limd |\Sd^c|/d = 0 \\
\end{array} 
\right.
\end{align*}
holds conditionally on $\Zn$.
\end{lemma}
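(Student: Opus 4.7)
The plan is to exploit the fact that conditionally on $\Zn$, the vector $(\widetilde{B}_{n,h}^{\diamond})_{h \in \Sd^c}$ is a centered Gaussian vector, since each $\widetilde{B}_{n,h}^{\diamond}$ is a linear combination of the i.i.d.\ standard Gaussian multipliers $\xi_1,\dots,\xi_L$ with coefficients that are $\Zn$-measurable. Once this reduction is made, the task turns into (a) identifying the conditional covariance matrix and showing it concentrates around the target covariance $\Sigma$ from Lemma \ref{lem:gaussappro}, and (b) applying an extreme-value result for maxima of Gaussian triangular arrays with logarithmic correlation decay.

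First, I would explicitly write out the conditional covariance. A Fubini/change-of-order-of-integration computation gives, for $h,i \in \Sd^c$,
\begin{align*}
\Cov(\widetilde{B}_{n,h}^{\diamond}, \widetilde{B}_{n,i}^{\diamond}\mid \Zn)
= \frac{36 n}{\sigma_h\sigma_i\tau(t_h)\tau(t_i)(t_h(1-t_h)t_i(1-t_i))^2}
  \sum_{\ell=1}^{L}\widehat{V}_{\ell,h}\widehat{V}_{\ell,i}\,\kappa_{h,i,\ell},
\end{align*}
where $\kappa_{h,i,\ell}$ collects the integrals $\int k(s,t_h)k(s,t_i)\,ds$ restricted to block $\ell$ and is bounded and explicit. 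I would then argue via the standard block-sum approximation (essentially the Künsch-type long-run variance identity) together with Lemma \ref{lem:basicbootstrap}(ii)---which guarantees that $\Ld$ occurs with high probability so each block lies fully on one side of $t_h$---that
\begin{align*}
\frac{1}{K}\sum_{\ell=1}^{L}\widehat{V}_{\ell,h}\widehat{V}_{\ell,i} \,\kappa_{h,i,\ell}
\;\xrightarrow{\,p\,}\;
\gamma_{h,i}\,\widetilde{\tau}(t_h,t_i)\cdot C(t_h,t_i),
\end{align*}
uniformly in $h,i \in \Sd^c$, where the right hand side reproduces exactly the limiting covariance structure already encountered in Lemma \ref{lem:gaussappro} (up to the $\sign(\dmuh)\sign(\Delta\mu_i)$ factors that originate from the centering/corrections on the blocks). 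The uniform step will require a union-bound and moment inequalities on $\sum_\ell \widehat{V}_{\ell,h}\widehat{V}_{\ell,i}$, making essential use of (T1) (geometric decay of $\vartheta_{j,h,p}$), the block length condition $K \sim n^{1-\ell}$ and Assumption (B1) to control the Fuk-Nagaev type bound across the $d^2\sim n^{2D}$ pairs of indices.

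Second, with the conditional covariance matrix $\widehat{\Sigma}^{\Sd^c}$ shown to satisfy $\max_{h,i\in\Sd^c}|\widehat{\Sigma}^{\Sd^c}_{h,i}-\Sigma_{h,i}|=o_p((\log d)^{-2})$ together with the uniform bound $|\Sigma_{h,i}|\le|\rho_{h,i}|$ inherited from Lemma \ref{lem:gaussappro}, the conditional law of the vector is close (in Kolmogorov distance on max-functionals, via the Gaussian comparison inequality of \cite{chernozhukov:2013}) to that of a centered Gaussian vector whose correlations satisfy Berman's condition thanks to (S3) and (S4). Hence for the case $\Sd^c=\setd$ one directly obtains $a_d(\max_{h\in\Sd^c}\widetilde{B}^{\diamond}_{n,h}-b_d)\convd G$ conditionally on $\Zn$ in probability. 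For $|\Sd^c|=cd+o(d)$ with $c\in(0,1)$ the same Berman argument applies, except the correct normalizing pair is $(a_{|\Sd^c|},b_{|\Sd^c|})$; rewriting $a_d(\max-b_d)$ in terms of these sequences produces the deterministic shift $2\log c$, exactly as in the proof of Corollary \ref{cor:adjustedgumbel}. The degenerate case $|\Sd^c|=o(d)$ is handled by the same rewriting, using that $a_{|\Sd^c|}(\max_{h\in\Sd^c}\widetilde{B}^{\diamond}_{n,h}-b_{|\Sd^c|})=O_p(1)$ while $a_d b_d - a_{|\Sd^c|}b_{|\Sd^c|}\to \infty$, so that the limit degenerates as stated.

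The main obstacle is clearly the uniform covariance concentration of step one: we need $(\log d)^{-2}$-accuracy over $d^2$ entries while the blocks $\widehat V_{\ell,h}$ are built from estimated change points and the truncation on $\Ld^c$ is not negligible without care. The tools in place (Lemma \ref{lem:basicbootstrap}, Assumption \ref{assump:bootstrap}, the $p>8$ moment condition) are tailored exactly to make the Fuk-Nagaev bound and the union bound collapse, but the bookkeeping, especially distinguishing contributions from blocks strictly before $nt_h$, strictly after, and straddling the change point, is where the proof becomes delicate and where careful use of (B2) to absorb $\log n /(K(\mu_d^\star)^2)$ error terms is essential.
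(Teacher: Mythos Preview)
Your proposal is correct and follows essentially the same approach as the paper: exploit conditional Gaussianity of $(\widetilde{B}_{n,h}^{\diamond})_{h\in\Sd^c}$, show the conditional covariance concentrates uniformly around the target matrix $\Sigma_{h,i}=\gamma_{h,i}\widetilde{\tau}(t_h,t_i)/(\sigma_h\sigma_i\tau(t_h)\tau(t_i))$, then invoke the Gaussian comparison inequality of \cite{chernozhukov:2013} and the Berman-type extreme-value argument with the rescaling trick from Corollary~\ref{cor:adjustedgumbel}. The paper handles the key covariance concentration step by a direct appeal to (an adaptation of) Lemma~E.8 in \cite{jirak:2015a}, obtaining a polynomial rate $n^{-\delta}$ on the event $\Ld$, rather than working through the Fuk--Nagaev bookkeeping you outline; also note that the sign factors $\sign(\dmuh)\sign(\Delta\mu_i)$ you mention do not appear in the target $\Sigma$ here, since $\widetilde{B}_{n,h}^{\diamond}$ is built from $k(s,t_h)$ directly rather than from $\mu_h(s,t_h)$.
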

\begin{proof}
Without loss of generality assume that the sets $\Sd$ and $\Sd^c$ are given by
\begin{align*}
\Sd^c = \{1,\dots,s\}
\;\;\;
\text{and}
\;\;\;
\Sd = \{s+1,\dots,d\}
\end{align*}
with $s= | \Sd^c |$.
Let $N=\left(N_1,\dots,N_s\right)^T$ denote a centered $s$-dimensional Gaussian vector with covariance matrix $\Sigma=(\Sigma_{ij})_{i,j=1}^d$ defined by 
\begin{align*}
\Sigma_{i,j}=\dfrac{\gamma_{i,j}\widetilde{\tau}(t_i,t_j)}{\sigma_i\sigma_j\tau(t_i)\tau(t_j)}~,
\end{align*}
where the function $\widetilde{\tau}$ is defined in \eqref{eq:widetildetau}.
Our aim is to control the (conditional) Kolmogorov-distance between $\maxhSdc \widetilde{B}_{n,h}^{\diamond}$ and $\maxhSdc N_h$. Since the random variables $\left\lbrace \xi_{\ell} \right\rbrace_{\ell \in \mathbb{N}}$ are independent, we can directly calculate the conditional covariance
\begin{align*}
\Cov_{\vert \mathcal{Z}_n}&\left(\widetilde{B}_{n,h}^{\diamond},\widetilde{B}_{n,i}^{\diamond}\right)\\
&=\dfrac{36}{\sigma_h\sigma_i\tau(t_h)\tau(t_i)(t_h(1-t_h))^2(t_i(1-t_i))^2n}\sum_{\ell=1}^{L}\widehat{V}_{\ell,h}\widehat{V}_{\ell,i}\beta_{\ell,h}\beta_{\ell,i}
\end{align*}
with the extra notation
\begin{align*}
\beta_{\ell,h} = \int_0^1 k(s,t_h)\left(I\{ \ell \leq \floor{Ls}  \} - \dfrac{\floor{Ls}}{L}\right)ds~.
\end{align*}
Let $\theta_d$ denote the distance
\begin{align*}
\theta_d 
= \max_{1\leq h,i\leq s} \left\vert \Cov_{\vert \mathcal{Z}_n}\left(\widetilde{B}_{n,h}^{\diamond},\widetilde{B}_{n,i}^{\diamond}\right) - \Sigma_{h,i} \right\vert.
\end{align*}
Using the fact that $\maxhSdc \max_{\ell=1}^L \vert \beta_{\ell,h} \vert \leq 1$ a straightforward adaption of Lemma E.8 in \cite{jirak:2015} gives
\begin{align*}
\Prob \Big  (\max_{1\leq h,i \leq s} \Big \vert \sigma_h\sigma_i\tau(t_h)\tau(t_i)\Cov_{\vert \mathcal{Z}_n}\left(\widetilde{B}_{n,h}^{\diamond},\widetilde{B}_{n,i}^{\diamond}\right) - \gamma_{h,i}\widetilde{\tau}(t_h,t_i) \Big \vert &I_{\mathcal{L}_d} > n^{-\delta} \Big )\\
&\lesssim L^{-2}n^{-C}
\end{align*}
for sufficiently small constants $C,\delta>0$, where the set $\Ld$ is defined in \eqref{eq:Ld}.
Using the lower bound $\sigma_h\sigma_i\tau(t_h)\tau(t_i) \geq \sigma_-^2\tau_-^2$ yields
\begin{align}\label{setC}
\Pb{ \mathcal{C}(\delta)^{C} } \lesssim n^{-C}
\end{align}
for the set
\begin{align*}
\mathcal{C}(\delta):= \left\{ \theta_d I_{\mathcal{L}_d} \leq n^{-\delta} \right\}.
\end{align*}
For a sufficiently small $C>0$ we have
\begin{align*}
\Pb{ \mathbb{P}_{\vert \mathcal{Z}_n}(\mathcal{L}_d^C\cup \mathcal{C}(\delta)^C)\geq n^{-C}} &\leq n^C\mean{\mathbb{P}_{\vert \mathcal{Z}_n}(\mathcal{L}_d^C\cup \mathcal{C}(\delta)^C) }\\
&= n^C \Pb{\mathcal{L}_d^C\cup \mathcal{C}(\delta)^C }=o(1)~.
\end{align*}
Now, we can derive the following upper bound
\begin{align}
\begin{split}
&\sup_{x \in \mathbb{R}} \left\vert \mathbb{P}_{\vert \mathcal{Z}_n}\left( \maxhd \widetilde{B}_{n,h}^{\diamond} \leq x \right) - \Pb{ \maxhd N_h \leq x } \right\vert \\
&\leq \sup_{x \in \mathbb{R}} \left\vert \mathbb{P}_{\vert \mathcal{Z}_n}\left( \maxhd \widetilde{B}_{n,h}^{\diamond} \leq x \right) - \Pb{ \maxhd N_h \leq x }\right\vert I_{\mathcal{L}_d\cap \mathcal{C}(\delta)} + \mathcal{O}_{P}(n^{-C})~.\label{eq:condkolm}
\end{split}
\end{align}
The identities $\gamma_{h,h}=\sigma_h^2$ and $\tilde{\tau}(t_h)=\tau^2(t_h)$ give 
$\Sigma_{h,h} = 1$ for all $h \in \setd$,
and by Lemma 3.1 in \cite{chernozhukov:2013} on the set $\mathcal{L}_d \cap \mathcal{C}(\delta)$ we have for the first summand in \eqref{eq:condkolm}
\begin{align}\label{eq:condkolm2}
\begin{split}
\sup_{x \in \mathbb{R}} \left\vert \mathbb{P}_{\vert \mathcal{Z}_n}\left( \maxhSdc \widetilde{B}_{n,h}^{\diamond} \leq x \right) - \Pb{ \maxhSdc N_h \leq x }\right\vert I_{\mathcal{L}_d\cap \mathcal{C}(\delta)}\\
\lesssim \theta_d^{1/3}\max\{1,\log(d/\theta_d \}^{2/3}I_{\mathcal{L}_d\cap \mathcal{C}(\delta)} \leq n^{-C}~.
\end{split}
\end{align}
Combining \eqref{eq:condkolm} and \eqref{eq:condkolm2} yields for the Kolmogorov distance
\begin{align}\label{kolmogorovnull}
\sup_{x \in \mathbb{R}} \left\vert \mathbb{P}_{\vert \mathcal{Z}_n}\left( \maxhSdc \widetilde{B}_{n,h}^{\diamond} \leq x \right) - \Pb{ \maxhSdc N_h \leq x } \right\vert
= o_{\mathbb{P}}(1)~.
\end{align}
The proof now follows observing the bound
$\max_{1\leq h,i \leq s} \left\vert \Sigma_{h,i} \right\vert 
\leq$ \\$\max_{1\leq h,i \leq s} \vert \rho_{h,i} \vert$
for the covariance matrix of $N$, which was derived (see the proof of Lemma \ref{lem:gaussappro})
and using adapted scaling sequences (see proof of Corollary \ref{cor:adjustedgumbel}), which yields
\begin{align*}
a_d \Big ( \maxhSdc N_h -b_d \Big )
\convd
\left\{
\begin{array}{rl}
  G           &\text{if}\;\; \limd |\Sd^c|/d = 1 ~, \vspace{0.2cm} \\	
  G + \log c  &\text{if}\;\; \limd |\Sd^c|/d = c \;\text{ for } c \in (0,1)~,\vspace{0.2cm} \\
  -\infty     &\text{if}\;\; \limd |\Sd^c|/d = 0 ~.
\end{array} 
\right.
\end{align*}
\end{proof}

\begin{lemma}\label{lem:BSTheorem2}
Conditionally on $\Zn$ it holds that
\begin{align*}
a_d \maxhSdc \widetilde{B}_{n,h}^{\diamond} - a_d \maxhSdc  \widetilde{B}_{n,h}^{\ast} \convp 0~.
\end{align*}
\end{lemma}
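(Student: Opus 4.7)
\textbf{Proof plan for Lemma \ref{lem:BSTheorem2}.} Starting from the definitions, the pointwise difference equals
$$
\widetilde{B}^{\diamond}_{n,h} - \widetilde{B}^{\ast}_{n,h} = \frac{6\sqrt{n}}{\sigma_h \tau(t_h)(t_h(1-t_h))^2} \int_0^1 \widetilde{\mathbb{U}}^{(L)}_{n,h}(s)\bigl[k(s,t_h) - k(s,\hat{t}_h)\bigr]\,ds .
$$
The kernel $k(s,t)= s\wedge t - st$ is Lipschitz in $t$ uniformly in $s\in[0,1]$ with constant at most $2$, so using Assumptions (T2) and (C1) to bound the prefactor by a constant multiple of $\sqrt{n}$, I would first reduce the problem to showing
$$
a_d \maxhSdc \sqrt{n}\,|\hat{t}_h - t_h| \cdot \sup_{s\in[0,1]}\bigl|\widetilde{\mathbb{U}}^{(L)}_{n,h}(s)\bigr| \convp 0 .
$$
The inequality $|\max_h a_h - \max_h b_h| \le \max_h |a_h - b_h|$ then yields the stated conclusion for the maxima, both unconditionally and (consequently) conditional on $\Zn$ in probability.

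The two factors are handled separately. For the first factor I would apply Lemma \ref{lem:hatth}: Assumption (B2) together with $K\sim n^{1-\ell}$ gives $\mu^{\star}_d \gtrsim \sqrt{\log n/K}$, from which the hypothesis \eqref{eq:limsupmustar} is immediate; Lemma \ref{lem:hatth} then delivers
$
\maxhSdc |\hat{t}_h - t_h| = o_{\Prob}(n^{-C_1})
$
for some sufficiently small $C_1>0$. For the second factor, conditional on $\Zn$ the process $\widetilde{\mathbb{U}}^{(L)}_{n,h}$ is a linear functional of the independent standard Gaussian multipliers $\{\xi_\ell\}_{\ell=1}^L$, hence centered Gaussian, and its pointwise conditional variance is bounded by $n^{-2}\sum_{\ell=1}^L \widehat{V}_{\ell,h}^2$. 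By the physical-dependence framework and the block construction, $\maxhSdc \sum_{\ell=1}^L \widehat{V}_{\ell,h}^2 = O_{\Prob}(n)$; this is essentially already contained in the covariance estimate established in the proof of Lemma \ref{lem:BSTheorem1} (with $h=i$). Observing that $s \mapsto \widetilde{\mathbb{U}}^{(L)}_{n,h}(s)$ is piecewise constant with at most $L+1$ distinct values (since it depends on $s$ only through $\floor{Ls}$), a conditional Gaussian union bound over the $d\cdot(L+1)$ relevant random variables gives
$$
\maxhSdc \sup_{s\in[0,1]} \bigl|\widetilde{\mathbb{U}}^{(L)}_{n,h}(s)\bigr| = O_{\Prob}\bigl(\sqrt{(\log d)/n}\bigr).
$$

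Combining the two bounds and using $a_d \sim \sqrt{\log d}$ together with $\log d \sim D\log n$ from Assumption (S1), I obtain
$$
a_d \maxhSdc \bigl|\widetilde{B}^{\diamond}_{n,h} - \widetilde{B}^{\ast}_{n,h}\bigr| \lesssim a_d \sqrt{n}\cdot o_{\Prob}(n^{-C_1}) \cdot O_{\Prob}\bigl(\sqrt{(\log d)/n}\bigr) = O_{\Prob}\bigl((\log d)\, n^{-C_1}\bigr) = o_{\Prob}(1),
$$
which completes the argument. The main obstacle is the uniform conditional Gaussian bound in the second factor: one must carefully verify that the conditional variances $n^{-2}\sum_\ell \widehat{V}_{\ell,h}^2$ are indeed $O_{\Prob}(1/n)$ uniformly in $h \in \Sd^c$, despite the mean-correction in $\widehat{Z}_{j,h}$ involving the estimators $\bar{Z}_h^{\pm}$. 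This is where the block truncation near $\hat{t}_h$ matters: on the set $\Ld$ of \eqref{eq:Ld}, each nontrivial block $\widehat{V}_{\ell,h}$ is a sum of $K$ centered observations from a single stationary regime, so the Lemma \ref{lem:hatvar}-type long-run variance control carries over, and standard concentration for quadratic forms (Hanson--Wright) gives the claimed bound outside an event of probability $o(1)$.
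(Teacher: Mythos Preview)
Your argument follows the same route as the paper: use the Lipschitz property of $t\mapsto k(s,t)$ to bound the difference by a constant times $\sqrt{n}\,\max_{h\in\Sd^c}|\hat t_h - t_h|\cdot\max_{h\in\Sd^c}\sup_s|\widetilde{\mathbb U}^{(L)}_{n,h}(s)|$, then control the two factors separately, the first by the uniform change-point rate $o_\Prob(n^{-C})$ and the second by an $O_\Prob(\sqrt{\log d/n})$ bound, so that the product multiplied by $a_d$ is $o_\Prob(1)$. The only methodological difference is in the second factor: the paper obtains it by invoking the conditional Gumbel limit for the bootstrap CUSUM maximum (Theorems~2.5 and~4.4 of \cite{jirak:2015}), whereas you exploit directly that $\widetilde{\mathbb U}^{(L)}_{n,h}(s)$ is conditionally Gaussian and piecewise constant in $s$, and apply a union bound over the $d(L{+}1)$ relevant points. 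Your route is more elementary and self-contained, but note that the uniform variance bound $\max_h n^{-1}\sum_\ell \widehat V_{\ell,h}^2 = O_\Prob(1)$ is not literally the $h=i$ case of the estimate in Lemma~\ref{lem:BSTheorem1} (that estimate carries the weights $\beta_{\ell,h}$); it does, however, follow by the same block-sum second-moment arguments on the event $\Ld$, as you indicate in your final paragraph.
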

\begin{proof}
The covariance kernel $k$ satisfies for all $t$, $t'$, $s \in [0,1]$
$ |k(s,t) - k(s,t')| \leq 2|t-t'|~. $
Due to $(t_h(1-t_h))^2\tau(t_h) \geq \bt^4\tau_-$ we derive the bound
\begin{align*}
a_d \Big \vert  \maxhSdc \widetilde{B}_{n,h}^{\ast} - \maxhSdc \widetilde{B}_{n,h}^{\diamond} \Big \vert
&\lesssim a_d\sqrt{n} \maxhSdc \int_0^1 \dfrac{1}{\sigmah} \big \vert \widetilde{\mathbb{U}}_{n,h}^{(L)}(s)\big \vert \vert k(s,t_h)-k(s,\hatth)\vert ds\\
&\lesssim a_d\sqrt{n} \maxhSdc \max_{s \in [0,1]}\dfrac{1}{\sigmah}\big \vert \widetilde{\mathbb{U}}_{n,h}^{(L)}(s) \big \vert \maxhSdc \vert \hatth - t_h \vert~.
\end{align*}
Choosing $C$ sufficiently it follows from Corollary 3.3 in \cite{jirak:2015} that for all $\varepsilon>0$
\begin{align}\label{eq:thBS}
\mathbb{P}_{\vert \mathcal{Z}} \left( n^C\maxhSdc \vert t_h - \hatth \vert > \varepsilon \right) 
= o_{\mathbb{P}}(1)~.
\end{align}
Theorem 2.5 and 4.4 from the same reference imply the weak convergence 
\begin{align*}
e_d \left( \sqrt{n} \maxhSdc \max_{s \in [0,1]} \dfrac{1}{\sigmah}\left\vert \widetilde{\mathbb{U}}_{n,h}^{(L)}(s) \right\vert  - \dfrac{e_d}{4}  \right)
\convd G
\end{align*}
conditionally on $\Zn$ in probability with $e_d=\sqrt{2 \log (2d) }$. This yields
\begin{align*}
\PbZ{a_d\sqrt{n}n^{-C} \maxhSdc \max_{s \in [0,1]} \dfrac{1}{\sigmah}\left\vert \widetilde{\mathbb{U}}_{n,h}^{(L)}(s) \right\vert >\varepsilon} = o_{\mathbb{P}}(1)
\end{align*}
for all $\varepsilon>0$, which completes the proof of Lemma \ref{lem:BSTheorem2}.
\end{proof}

\begin{lemma}\label{lem:BSTheorem3}
Conditionally on $\Zn$ it holds that
\begin{align*}
a_d \maxhSdc \widetilde{B}_{n,h}^{\ast} - a_d \maxhSdc B_{n,h}^{\ast} \convp 0.
\end{align*}
\end{lemma}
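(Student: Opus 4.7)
The two bootstrap CUSUM-processes differ only because $\widetilde{\mathbb U}^{(L)}_{n,h}$ uses a whole number of blocks up to index $\floor{Ls}$, while $\mathbb U^{(L)}_{n,h}$ truncates at observation index $\floor{ns}$, which normally falls strictly inside a block. My plan is to make this explicit: writing $\floor{ns}=qK+r$ with $0\leq r<K$, one checks $\floor{Ls}\in\{q,q+1\}$ and obtains
$$\widetilde{\mathbb U}^{(L)}_{n,h}(s) - \mathbb U^{(L)}_{n,h}(s) = \frac{1}{n}A_h(s) - C_h(s)\,S_h,$$
where $A_h(s) := \sum_{\ell=1}^{\floor{Ls}}\xi_\ell \widehat V_{\ell,h} - \sum_{\ell=1}^L \xi_\ell \widehat V_{\ell,h}(\floor{ns})$ reduces to a single signed partial-block contribution of the form $\pm\xi_{q+1}\sum_{j=qK+1}^{qK+r'} \widehat Z_{j,h}$ with $r'\leq K$, the deterministic weight $C_h(s) := \floor{Ls}/(Ln)-\floor{ns}/n^2$ satisfies $|C_h(s)|\leq K/n^2$ uniformly in $s$, and $S_h := \sum_{\ell=1}^L \xi_\ell \widehat V_{\ell,h}$.

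Next, $|k(s,\hatth)|\leq 1$ and the uniform lower bounds on $\sigma_h$, $\tau(t_h)$ and $(t_h(1-t_h))^2$ reduce the claim to showing that both summands on the right of
$$a_d\maxhSdc\bigl|\widetilde B^{\ast}_{n,h}-B^{\ast}_{n,h}\bigr| \;\lesssim\; \frac{a_d}{\sqrt{n}}\maxhSdc\max_{s\in[0,1]}|A_h(s)| + \frac{a_d K}{n^{3/2}}\maxhSdc|S_h|$$
are $o_\Prob(1)$. For the first summand, $\max_{\ell\leq L}|\xi_\ell|=O_\Prob(\sqrt{\log L})$ and the moment Assumptions (M1)--(M2) combined with the Fuk--Nagaev argument used in Lemma \ref{lem:basicbootstrap} give, on the high-probability event $\Ld$, the uniform partial-sum bound $\maxhSdc\max_{\ell,r}\bigl|\sum_{j=(\ell-1)K+1}^{(\ell-1)K+r}\widehat Z_{j,h}\bigr| = O_\Prob(\sqrt{K\log n})$. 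Together with $a_d\sim\sqrt{\log d}$ and $K=n^{1-\ell}$ this bounds the first summand by $O_\Prob(\sqrt{n^{-\ell}\log^3 n})$, which vanishes because $\ell>3/4$ by (B1). For the second summand, conditionally on $\Zn$ the variable $S_h$ is Gaussian with variance $\sum_\ell \widehat V_{\ell,h}^2 = O_\Prob(n)$ uniformly in $h$ on $\Ld$ (since each $\widehat V_{\ell,h}$ is then a centered block sum of $K$ observations with bounded long-run variance), so a union bound yields $\maxhSdc|S_h| = O_\Prob(\sqrt{n\log d})$, and the second summand is of order $O_\Prob(n^{-\ell}\log n)$.

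The main technical obstacle is propagating these bounds through the substitution of the unknown pre- and post-change means in $\widehat Z_{j,h}$ by the sample means $\bar Z^{\pm}_h$. Here Lemma \ref{lem:basicbootstrap}(i), which yields $\maxhd|\hatdmuh-\dmuh|=o_\Prob(n^{-C})$ for every $C<1/2$, together with Lemma \ref{lem:basicbootstrap}(ii), which on a set of probability $1-O(n^{-C})$ guarantees that no block straddles a change point, ensures that the substitution introduces only an additive $o_\Prob(1)$ error at every stage. Finally, all bounds above are unconditional, and convergence in conditional probability given $\Zn$ follows by a standard Markov inequality applied to $\Pb{\cdot\mid \Zn}$.
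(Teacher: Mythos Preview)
Your proof is correct and rests on the same decomposition as the paper's: both split $\widetilde{\mathbb U}^{(L)}_{n,h}(s)-\mathbb U^{(L)}_{n,h}(s)$ into a single partial-block term (your $A_h(s)$, the paper's $S^{(1)}_{n,h}$) and a weight-difference term (your $C_h(s)S_h$, the paper's $S^{(2)}_{n,h}$), after the identical reduction via $|k|\leq 1$ and the uniform lower bounds on $\sigma_h$, $\tau(t_h)$ and $(t_h(1-t_h))^2$. The difference is purely in how the two remainder terms are estimated: the paper dispatches both by citing Theorem~C.4 of \cite{jirak:2015a}, whereas you give a more elementary, self-contained argument---bounding $A_h(s)$ via $\max_\ell|\xi_\ell|=O_\Prob(\sqrt{\log L})$ together with a Fuk--Nagaev partial-sum bound, and $S_h$ via its conditional Gaussianity plus a union bound. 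Your route buys transparency and avoids an external black box; the price is a little extra bookkeeping, in particular the uniform-in-$h$ control $\max_{h\in\Sd^c}\sum_\ell\widehat V_{\ell,h}^2=O_\Prob(n)$ that underpins your Gaussian tail bound on $S_h$, which you state but do not fully justify (it does follow from the same block-sum covariance estimate used in the proof of Lemma~\ref{lem:BSTheorem1}). One small imprecision: in fact $\floor{Ls}=q$ always (not merely $\in\{q,q+1\}$), but this does not affect your argument.
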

\begin{proof}
Define $\PbZL{\cdot} = \PbZ{\cdot \cap \Ld }$ with $\Ld$ introduced in \eqref{eq:Ld}. By Lemma \ref{lem:basicbootstrap} the claim follows if we can verify
\begin{align} \label{trivbound}
\begin{split}
\mathbb{P}_{ \vert \mathcal{Z}_n}^{\mathcal{L}} \Big ( \Big \vert \maxhSdc B^{\ast}_{n,h} - \maxhSdc &\widetilde{B}_{n,h}^{\ast} \Big \vert > \dfrac{\varepsilon}{a_d} \Big )\\
&\leq ~\mathbb{P}_{ \vert \mathcal{Z}_n}^{\mathcal{L}} \Big ( \maxhSdc  \big \vert B^{\ast}_{n,h} - \widetilde{B}_{n,h}^{\ast}\big \vert  > \dfrac{\varepsilon}{a_d} \Big ) 
= o_{\mathbb{P}}(1)~.
\end{split}
\end{align}
We have the trivial bound
\begin{align*}
\Big \vert \maxhSdc B^{\ast}_{n,h} - \maxhSdc \widetilde{B}_{n,h}^{\ast}\Big \vert 
\leq \maxhSdc  \big \vert B^{\ast}_{n,h} - \widetilde{B}_{n,h}^{\ast}\big \vert~ .
\end{align*}
Assumption (T2) and (C1) imply $\sigma_h \geq \sigma_-$ and\\ $\tau(t_h)(t_h(1-t_h))^2 \geq \tau_-\bt^4$, which yields  
\begin{align*}
\left\vert B^{\ast}_{n,h} - \widetilde{B}_{n,h}^{\ast} \right\vert
&=\left\vert \dfrac{6\sqrt{n}}{\sigmah\tau(t_h)(t_h(1-t_h))^2} \int_0^1 \left(\mathbb{U}_{n,h}^{(L)}(s)-\widetilde{\mathbb{U}}_{n,h}^{(L)}(s)\right)k(s,\hatth)ds 
\right\vert \\
&\lesssim \sqrt{n}\int_0^1 \left\vert \mathbb{U}_{n,h}^{(L)}(s)-\widetilde{\mathbb{U}}_{n,h}^{(L)}(s)\right\vert k(s,\hatth)ds\\
&\lesssim \sqrt{n} \max_{s \in [0,1]} \left\vert \mathbb{U}_{n,h}^{(L)}(s)-\widetilde{\mathbb{U}}_{n,h}^{(L)}(s) \right\vert 
\leq S_{n,h}^{(1)} + S_{n,h}^{(2)}~,
\end{align*}
where we use the notation
\begin{align*}
S_{n,h}^{(1)} &= \max_{s \in [0,1]} \dfrac{1}{\sqrt{n}}\Big \vert\sum_{j=1}^{L}\xi_j\widehat{V}_{j,h}(\floor{ns}) - \sum_{j=1}^{\floor{Ls}}\xi_{j}\widehat{V}_{j,h}(n) \Big\vert,\\
S_{n,h}^{(2)} &= \max_{s \in [0,1]}  \Big\vert \dfrac{\floor{ns}}{n\sqrt{n}} - \dfrac{\floor{Ls}}{L\sqrt{n}}  \Big\vert \Big\vert \sum_{j=1}^{L}\xi_j\widehat{V}_{j,h}(n) \Big\vert
\end{align*}
In the proof of Theorem C.4 from \cite{jirak:2015a} it is shown, that
\begin{align*}
\mathbb{P}_{\vert \mathcal{Z}}^{\mathcal{L}}\Big( \maxhSdc S_{n,h}^{(1)} > \dfrac{\varepsilon}{a_d} \Big) = o_{\mathbb{P}}(1)
\end{align*}
for all $\varepsilon>0$. The second summand can be bounded by
\begin{align*}
S_{n,h}^{(2)} 
&= \max_{s \in [0,1]}  \Big\vert \dfrac{\floor{ns}}{K} - \dfrac{\floor{Ls}K}{K} \Big\vert \Big\vert \dfrac{1}{\sqrt{n}L}  \sum_{j=1}^{L}\xi_j\widehat{V}_{j,h}(n) \Big\vert\\
&\leq \Big\vert \dfrac{1}{\sqrt{n}L}  \sum_{j=1}^{L}\xi_j\widehat{V}_{j,h}(n) \Big\vert
\end{align*}
and it follows again from the above reference
\begin{align*}
\mathbb{P}_{ \vert \mathcal{Z}_n}^{\mathcal{L}}\bigg( \maxhSdc \bigg\vert \dfrac{1}{\sqrt{n}L}  \sum_{j=1}^{L}\xi_j\widehat{V}_{j,h}(n) \bigg\vert > \dfrac{\varepsilon}{a_d} \bigg) = o_{\mathbb{P}}(1)
\end{align*}
for all $\varepsilon >0$, which finishes the proof of Lemma \ref{lem:BSTheorem3}
\end{proof}

\begin{lemma}\label{lem:BSTheorem4}
The weak convergence
\begin{align}\label{eq:weakconvbnhstar}
a_d \Big( \maxhSdc B_{n,h}^{(I)} - b_d \Big)
\convd
\left\{
\begin{array}{rl}
  G           &\text{if}\;\; \limd |\Sd^c|/d = 1 ~, \vspace{0.2cm} \\	
  G + \log c  &\text{if}\;\; \limd |\Sd^c|/d = c \;\text{ for } c \in (0,1)~,\vspace{0.2cm} \\
  -\infty     &\text{if}\;\; \limd |\Sd^c|/d = 0 \\
\end{array} 
\right.
\end{align}
holds conditionally on $\Zn$.
\end{lemma}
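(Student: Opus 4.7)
The plan is to leverage the three preceding lemmas to first establish the desired conditional weak convergence for the ``idealised'' bootstrap statistic $B_{n,h}^{\ast}$, and then to transfer this limit to $B_{n,h}^{(I)}$ via a Slutsky-type argument of the same flavour as the one used in the final part of the proof of Theorem~\ref{thm:gumbel}. Concretely, combining Lemmas~\ref{lem:BSTheorem1}--\ref{lem:BSTheorem3} (and using the case analysis in $|\Sd^c|/d$), I would conclude that, conditionally on $\Zn$ in probability, $a_d(\maxhSdc B_{n,h}^{\ast} - b_d)$ already has the limiting law stated in~\eqref{eq:weakconvbnhstar}. What remains is to show that replacing $\sigma_h$, $\tau(t_h)$ and $(t_h(1-t_h))^2$ in $B_{n,h}^{\ast}$ by their hatted counterparts introduces only a negligible perturbation at the relevant rate.

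To this end, writing $B_{n,h}^{(I)} = Q_h \, B_{n,h}^{\ast}$ with
\begin{align*}
Q_h := \frac{\sigma_h \tau(t_h)(t_h(1-t_h))^2}{\hat{s}_h \tau(\hatth)(\hatth(1-\hatth))^2},
\end{align*}
it suffices to show that $\PbZ{\maxhSdc|Q_h - 1| > \delta_d} \convp 0$ for the sequence $\delta_d = (\log d)^{-2}$, since $a_d b_d \delta_d \to 0$. I would split $Q_h - 1$ into three multiplicative contributions. The deterministic functions $t \mapsto \tau(t)$ and $t \mapsto (t(1-t))^2$ are Lipschitz on $[\bt, 1-\bt]$, so by Lemma~\ref{lem:hatth} (applicable via Assumption~(B2), which implies condition~\eqref{eq:limsupmustar}) the two $\hatth$-contributions are $o_\Prob(n^{-C})$ uniformly over $\Sd^c$. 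The bootstrap multiplier noise factorises as $\hat{s}_h^2 = \bigl(\tfrac{1}{L}\sum_{\ell=1}^L \xi_\ell^2\bigr) \cdot \hatsigmah$ with a factor independent of $h$ satisfying $\tfrac{1}{L}\sum_{\ell=1}^L \xi_\ell^2 = 1 + O_\Prob(L^{-1/2})$ by the law of large numbers for iid squared standard normals, and this concentration holds in probability for $\PbZ{\cdot}$ as well. The remaining factor $\hatsigmah$ must be controlled uniformly over $h\in\Sd^c$ via an adaptation of Lemma~\ref{lem:hatvar}.

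Given the uniform bound $\maxhSdc|Q_h - 1| = o_\Prob(\delta_d)$, the conclusion follows by the Slutsky argument of Section~\ref{sec713} applied verbatim. On the event $\mathcal{Q}_d = \{\maxhSdc|Q_h - 1| \leq \delta_d\}$, with $u_d(x) := x/a_d + b_d$, the sandwich
\begin{align*}
\PbZ{\maxhSdc B_{n,h}^{\ast} \leq u_d(x)(1-\delta_d)} &\leq \PbZ{\maxhSdc B_{n,h}^{(I)} \leq u_d(x)} \\
&\leq \PbZ{\maxhSdc B_{n,h}^{\ast} \leq u_d(x)(1+\delta_d)}
\end{align*}
holds up to a contribution on $\mathcal{Q}_d^c$ that vanishes in probability; sending first $n,d\to\infty$ and then $\varepsilon\to 0$ in the standard comparison with $u_d(x\pm\varepsilon)$ yields the stated limit in each of the three regimes. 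The main obstacle is the uniform consistency of $\hatsigmah$ over $h\in\Sd^c$: Lemma~\ref{lem:hatvar} is stated under the assumption $|\dmuh|\geq C_\ell$, whereas in the bootstrap setting $|\dmuh|$ may shrink with $d$ and only $\mu^{\ast}_d = \min_{h\in\Sd^c}|\dmuh|$ is controlled. I would therefore rework that lemma by replacing the lower bound $C_\ell$ throughout by $\mu^{\ast}_d$, invoking Assumption~(B2) to guarantee that both halves of the sample split at $\hatth$ contain enough ``clean'' observations for a sufficiently accurate long-run variance estimate uniformly across $\Sd^c$.
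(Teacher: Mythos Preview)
Your proposal is correct and follows essentially the same route as the paper: combine Lemmas~\ref{lem:BSTheorem1}--\ref{lem:BSTheorem3} to obtain the limit for $B_{n,h}^{\ast}$, then pass to $B_{n,h}^{(I)}$ by a multiplicative Slutsky argument via the ratios $Q_h$, the set $\mathcal{Q}_d$ with threshold $\delta_d=(\log d)^{-2}$, and the sandwich comparison with $u_d(x\pm\varepsilon)$ exactly as in Section~\ref{sec713}. The only difference is in how the uniform control of $|Q_h-1|$ (and in particular of $\hatsigmah$ over $\Sd^c$ without the lower bound $C_\ell$) is justified: where you propose to rework Lemma~\ref{lem:hatvar} under Assumption~(B2), the paper simply invokes Proposition~3.5 of \cite{jirak:2015} together with~\eqref{eq:thBS} to conclude $\PbZ{\mathcal{Q}_d^c}=o_p(1)$, which is precisely the external result covering the obstacle you flagged.
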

\begin{proof}
Combining Lemmas \ref{lem:BSTheorem1}, \ref{lem:BSTheorem2} and \ref{lem:BSTheorem3} already gives assertion \eqref{eq:weakconvbnhstar} for the random variables $B_{n,h}^{\ast}$. We use the additional notation
\begin{align*}
Q^{-}_d = \min_{h \in \Sd^c} \dfrac{\widehat{s}_h\tauhatth(\hatth(1-\hatth))^2}{\sigma_h \tauth(t_h(1-t_h))^2}
\;\;\;\text{und}\;\;\;
Q^{+}_d = \max_{h \in \Sd^c} \dfrac{\widehat{s}_h\tauhatth(\hatth(1-\hatth))^2}{\sigma_h \tauth(t_h(1-t_h))^2}
\end{align*}
and the set
$\mathcal{Q}_d = \left\lbrace \left| Q^{-}_d - 1 \right| \vee \left| Q^{+}_d -1 \right| \leq \delta_d \right\rbrace $
with $\delta_d = (\log d)^{-2}$. Let $u_d(x) = x/a_d+b_d$, then we obtain for fixed $\varepsilon>0$ and $d$ sufficiently large
\begin{align*}
\PbZ{ \max_{h \in \Sd^c} B_{n,h}^{\ast} \leq u_d(x-\varepsilon)}&-\PbZ{\mathcal{Q}_d^c}
\leq \PbZ{ \max_{h \in \Sd^c} B_{n,h}^{(I)} \leq u_d(x) } \\
&\leq \PbZ{ \max_{h \in \Sd^c} B_{n,h}^{\ast} \leq u_d(x+\varepsilon)} + \PbZ{\mathcal{Q}_d^c}~.
\end{align*}
Combining Proposition 3.5 from \cite{jirak:2015} with assertion \eqref{eq:thBS} one can easily verify that $\PbZ{\mathcal{Q}_d^c} = o_{\mathbb{P}}(1)$ and the remainder of the proof can be done analogously to the proof of Theorem \ref{thm:gumbel}.
\end{proof}

\begin{lemma}
Conditionally on $\Zn$ it holds that
\begin{align*}
a_d \maxhSdc B_{n,h}^{(I)} - a_d \maxhSdc B_{n,h} \convp 0~.
\end{align*}
\end{lemma}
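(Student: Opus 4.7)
The plan is to exploit that, by the very definition in \eqref{eq:BSIntegral}, the statistics $B_{n,h}$ and $B_{n,h}^{(I)}$ agree exactly when the indicator $I\{\vert\hatdmuh\vert > n^{-1/4}\}$ equals one, and that this indicator is $\Zn$-measurable. Consequently, on the $\Zn$-measurable event
\begin{align*}
E_n := \Big\{ \min_{h \in \Sd^c} \vert\hatdmuh\vert > n^{-1/4} \Big\},
\end{align*}
we have $B_{n,h} = B_{n,h}^{(I)}$ simultaneously for all $h \in \Sd^c$, hence
\begin{align*}
a_d \maxhSdc B_{n,h}^{(I)} - a_d \maxhSdc B_{n,h} = 0
\end{align*}
identically on $E_n$. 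It therefore suffices to verify $\Pb{E_n} \to 1$ and then translate this into the desired conditional statement.

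To show $\Pb{E_n} \to 1$ I would combine the triangle inequality $\vert\hatdmuh\vert \geq \vert\dmuh\vert - \vert\hatdmuh - \dmuh\vert$ with the uniform estimate $\max_{h=1}^d \vert\hatdmuh - \dmuh\vert = o_\Prob(n^{-C})$ for any $C<1/2$ supplied by Lemma \ref{lem:basicbootstrap}(i); choosing $C = 1/4 + \varepsilon$ for some small $\varepsilon>0$ gives $\min_{h \in \Sd^c} \vert\hatdmuh\vert \geq \mu_d^{\ast} - o_\Prob(n^{-1/4-\varepsilon})$. What remains is to check that $\mu_d^{\ast}$ itself eventually exceeds $2 n^{-1/4}$. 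This is precisely where Assumption \ref{assump:bootstrap}(B2) enters: together with $K \sim n^{1-\ell}$ and $\ell > 3/4$ from \eqref{eq:LK} and (B1), the requirement $\log n / (K(\mu_d^{\ast})^2) \to 0$ forces $\mu_d^{\ast} \gtrsim n^{-(1-\ell)/2}\sqrt{\log n}$, and the bound $(1-\ell)/2 < 1/8 < 1/4$ then yields $n^{1/4}\mu_d^{\ast} \to \infty$, so that $E_n$ occurs with probability tending to one.

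Finally, to pass from the unconditional statement to the conditional convergence claimed in the lemma, I would use that $E_n \in \sigma(\Zn)$, which forces $\PbZ{E_n^c} = I_{E_n^c}$. The deterministic identity established above then implies
\begin{align*}
\PbZ{ \big\vert a_d \maxhSdc B_{n,h}^{(I)} - a_d \maxhSdc B_{n,h} \big\vert > \varepsilon } \;\leq\; I_{E_n^c}
\end{align*}
for every $\varepsilon>0$, and $\Pb{ I_{E_n^c} > \eta } \leq \Pb{E_n^c} \to 0$ for every $\eta>0$ provides the required mode of convergence. I do not expect a genuine obstacle here: the lemma is really a reduction argument, and the only delicate point is matching the truncation rate $n^{-1/4}$ in the definition of $B_{n,h}$ with the lower bound on $\mu_d^{\ast}$ implied by Assumption (B2), for which the already-assumed range $\ell>3/4$ leaves ample room.
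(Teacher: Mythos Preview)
Your proposal is correct and follows essentially the same route as the paper: both reduce to the event $\{\min_{h\in\Sd^c}\vert\hatdmuh\vert>n^{-1/4}\}$, on which $B_{n,h}=B_{n,h}^{(I)}$ identically, and then verify that this event has probability tending to one via Lemma~\ref{lem:basicbootstrap}(i) together with the lower bound on $\mu_d^\ast$ forced by Assumption~(B2) and $\ell>3/4$. Your treatment is in fact slightly more careful than the paper's in one respect: you explicitly use the $\Zn$-measurability of $E_n$ to pass from the unconditional statement $\Pb{E_n^c}\to 0$ to the conditional bound $\PbZ{\,\cdot\,}\le I_{E_n^c}=o_\Prob(1)$, whereas the paper only writes the unconditional probabilities.
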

\begin{proof}
For fixed $\varepsilon>0$ we have
\begin{align*}
\Prob \Big( &a_d\left\vert \maxhSdc B_{n,h}^{(I)} - \maxhSdc B_{n,h} \right\vert > \varepsilon \Big)
\leq \Prob \Big( a_d\maxhSdc \left\vert  B_{n,h}^{(I)} - B_{n,h} \right\vert > \varepsilon\Big)\\
&\leq \Prob \Big( \maxhSdc \mathcal{I}\left\lbrace \vert \hatdmuh \vert \leq n^{-1/4} \right\rbrace > \varepsilon \Big)
= \Prob \Big( \minhSdc \vert \hatdmuh \vert \leq n^{-1/4} \Big)~.
\end{align*}
The proof now follows by
\begin{align*}
\Prob \Big( \minhSdc &\vert \hatdmuh \vert \leq n^{-1/4} \Big)\\
&\leq \Prob \Big( \minhSdc \vert \dmuh \vert \leq 2n^{-1/4} \Big)
+ \Prob \Big( \minhSdc \vert \dmuh \vert  - \minhSdc \vert \hatdmuh \vert > n^{-1/4}\Big)\\
&\leq \Prob \Big( K \minhSdc \vert \dmuh \vert \leq 2 \Big)
+ \Prob \Big( \maxhSdc \vert \dmuh - \hatdmuh \vert > n^{-1/4}\Big) = o(1)~,
\end{align*}
where we also used that $K=n^{1-\ell}$ and that Assumption \ref{assump:bootstrap} implies $1-\ell < 1/4$ together with
$$ \lim_{n,d\to \infty} K \minhSdc \vert \dmuh \vert = \infty~.$$
\end{proof}

\end{document}